\providecommand{\tabularnewline}{\\}
\numberwithin{equation}{section}
\numberwithin{figure}{section}
\theoremstyle{plain}
\newtheorem{thm}{\protect\theoremname}
\theoremstyle{remark}
\newtheorem{rem}[thm]{\protect\remarkname}
\theoremstyle{plain}
\newtheorem*{thm*}{\protect\theoremname}
\theoremstyle{plain}
\newtheorem*{cor*}{\protect\corollaryname}
\theoremstyle{remark}
\newtheorem*{rem*}{\protect\remarkname}
\theoremstyle{definition}
\newtheorem{example}[thm]{\protect\examplename}
\theoremstyle{plain}
\newtheorem{conjecture}[thm]{\protect\conjecturename}
\theoremstyle{definition}
\newtheorem{defn}[thm]{\protect\definitionname}
\theoremstyle{plain}
\newtheorem{fact}[thm]{\protect\factname}
\theoremstyle{plain}
\newtheorem{lem}[thm]{\protect\lemmaname}
\theoremstyle{plain}
\newtheorem{prop}[thm]{\protect\propositionname}
\theoremstyle{plain}
\newtheorem{cor}[thm]{\protect\corollaryname}
\providecommand{\conjecturename}{Conjecture}
\providecommand{\corollaryname}{Corollary}
\providecommand{\definitionname}{Definition}
\providecommand{\examplename}{Example}
\providecommand{\factname}{Fact}
\providecommand{\lemmaname}{Lemma}
\providecommand{\propositionname}{Proposition}
\providecommand{\remarkname}{Remark}
\providecommand{\theoremname}{Theorem}
\begin{document}
\title{Surfaces and p-adic fields I: Dehn twists}
\author{Nadav Gropper}
\maketitle
\begin{abstract}
Under the philosophy of arithmetic topology, one would like to have
an analogy between surfaces $S$, and p-adic fields $K$.

In the following we describe a point of view which helps look at surfaces
and p-adic fields in a ``uniform way'', and show that results on
mapping class groups can be extended to this point of view, and thus
be applied to $G_{K}$, the absolute Galois groups of the p-adic field
$K$.

By moving both groups to the world of pro-p groups (for $G_{K}$ we
take its maximal pro-p quotient, and for $\pi_{1}(S)$ we take its
pro-p completion), we see they both are pro-p Poincare duality groups
of dimension 2, also known as Demuskin groups. Such groups have a
very nice classification in terms of generators and relations.

Next, in order to move geometric ideas to a purely group theoretic
setting, we use the language of graphs of groups and Bass-Serre trees. 

By restating results for surfaces purely in such a manner, we can
(after proving a few technical results) get similar results for p-adic
fields.

We start by examining discrete groups with Demuskin type relations,
for which we show a few pro-p rigidity type results.

Using this we show that all splittings of a Demuskin group come from
a discrete splitting, which in turn helps us show that Dehn twists
make sense in such a context.

This gives us a family of infinite order Outer automorphisms of $G_{K}(p)$,
the maximal pro-p quotient of $G_{K}$, which are ``arithmetic Dehn
twists''. On the other hand, when specializing this to Demuskin groups
coming from surface groups, one gets back the usual definition of Dehn twists
on surfaces.

We also get a curve complex for p-adic fields, namely the complex
whose edges are graph of groups with $G_{K}(p)$ as their fundamental
group, and 2 vertices have an edge between them if the graphs corresponding
to the vertices are compatible. By the relations between discrete
and pro-p Demuskin groups, we get a nice way to understand said curve
complex. 

As a finally corollary, we show that there in an infinite family of
non-isomorphic discrete groups, having isomorphic pro-$l$ completions
for all primes $l$ (which are free pro-$l$ for $l\neq p$ and Demuskin
for $l=p$).
\end{abstract}
\newpage

$ $

\section{Introduction}

The purpose of this paper is to explain a new point of view, which
furthers the analogies of arithmetic topology, and allows for a more
``uniform'' approach towards p-adic fields $K$, and surfaces $S$.

Earlier reasons for such analogs, come from studying of the structure
of absolute Galois groups of p-adic fields, one gets a presentation
very similar to that of a surface group.

A family of groups containing both, is pro-p Poincare Duality groups
of dimension 2, which are also known as Demuskin groups. 

Another place where the similarities continue is when looking at 3-dimensional
hyperbolic manifolds, such manifold have Mostow's rigidity theorem,
which states that a complete finite volume n-dimensional hyperbolic
manifold is determined by its fundamental group. 

One has a similar theorem for number fields, which states that the
absolute Galois group of a number field determines the field (see
Neukirch, Uchida \cite{neukirch1969kennzeichnung,uchida1976isomorphisms}).

$ $

The automorphism in definition \ref{def: neuk outer}, defined by
Neukirch-Schmidt-Wingberg \cite{neukirch2013cohomology} is an important
example in anabelian geometry, showing that there is no direct analogue
for results such as Neukirch, Uchida \cite{uchida1976isomorphisms,neukirch1969kennzeichnung}
and is the only constructed outer automorphism of $G_{K}$, the absolute
Galois group of $K$, the author could find.

The argument given in \cite{neukirch2013cohomology} only shows that
the automorphism is an outer automorphism for $G_{K}$, but it does
not work for powers of that automorphism.

If one looks at representations which are trivial mod p, we get that
the representation factors through $G_{K}(p)$, the maximal pro-p
quotient of $G_{K}$ (i.e the Galois group of the maximal p-extension).

One can define at a similar construction to \ref{def: neuk outer},
which gives an automorphism of $G_{K}(p)$, but again it is not immediate
it will be outer.
\begin{rem}
One of our interests in $Out(G_{K})$ is its action on deformation
spaces of Galois representations, if one looks at representations
which are trivial mod p, we get that the representation factors through
the maximal pro-p quotient of $G_{K}$, $G_{K}(p)$.
\end{rem}

Going back to surfaces, one has their mapping class group, which can
also be seen as $Out(\pi_{1}(S))$ for a surface $S$ (see fact \ref{fact:Dehn-Nilsen}).

The theory of mapping class groups of surfaces is extremely rich,
and has given many interesting properties of actions on Teichmuller
spaces (which are deformation spaces of representations of surface
groups). 

The problem with generalizing such theories, is that they are intrinsically
extremely geometrical, and so one would like to know if and how we
can define all such objects in a group theoretical way.

In the case of the mapping class group, we have its building blocks
which are Dehn twists around simple closed curves, which one of the
main results in this paper is explaining what their generalization
to Demuskin groups is, and showing that indeed they make sense in
such a setting.

As we explain in fact \ref{fact:scc and split}, one can think of
curves on a surface, as splittings of $\pi_{1}(S)$. By splitting
of a group $G$, we mean writing $G$ as a fundamental group of some
graph of groups (and not in the sense of split exact sequences). The
main splittings we will focus on, are 1-edge graphs of groups (which
are in a sense the building blocks of all graphs of groups). These
correspond to either an amalgamated free product or to an HNN extension.

To illustrate this, look at the following set of curves on a genus
3 surface

\begin{center} 
\begin{tikzpicture} 
\filldraw[fill=gray](0,1) to[out=30,in=150] (2,1) to[out=-30,in=210] (3,1) to[out=30,in=150] (5,1) to[out=-30,in=210] (6,1) to[out=30,in=150] (8,1) to[out=-30,in=30] (8,-1) to[out=210,in=-30] (6,-1) to[out=150,in=30] (5,-1) to[out=210,in=-30] (3,-1) to[out=150,in=30] (2,-1) to[out=210,in=-30] (0,-1) to[out=150,in=-150] (0,1);
\draw[smooth] (0.4,0.1) .. controls (0.8,-0.25) and (1.2,-0.25) .. (1.6,0.1);
\filldraw[fill=white][smooth] (0.5,0.02) .. controls (0.8,-0.25) and (1.2,-0.25) .. (1.5,0.02);
\filldraw[fill=white][smooth] (0.5,0.015) .. controls (0.8,0.2) and (1.2,0.2) .. (1.5,0.015);
\draw[smooth] (3.4,0.1) .. controls (3.8,-0.25) and (4.2,-0.25) .. (4.6,0.1);
\filldraw[fill=white][smooth] (3.5,0.02) .. controls (3.8,-0.25) and (4.2,-0.25) .. (4.5,0.02); 
\filldraw[fill=white][smooth] (3.5,0.015) .. controls (3.8,0.2) and (4.2,0.2) .. (4.5,0.015); 
\draw[smooth] (6.4,0.1) .. controls (6.8,-0.25) and (7.2,-0.25) .. (7.6,0.1); 
\filldraw[fill=white][smooth] (6.5,0.02) .. controls (6.8,-0.25) and (7.2,-0.25) .. (7.5,0.02);
\filldraw[fill=white][smooth] (6.5,0.015) .. controls (6.8,0.2) and (7.2,0.2) .. (7.5,0.015);
\draw [color=red](4.0,-0.17) arc(270:90:0.3 and -1.13/2);
\draw[color=red][dashed] (4.0,-0.17) arc(270:450:0.3 and -1.13/2);
\draw [color=yellow](4.5,0.01) arc(0:180: -1 and 0.3);
\draw [color=yellow][dashed](4.5,-0.01) arc(0:-180: -1 and 0.3);
\draw [color=blue](1,0) circle (1 and 0.8);
\draw [color=orange](2.5,0.86) arc(270:90:0.3 and -0.857);
\draw[color=orange][dashed] (2.5,0.86) arc(270:450:0.3 and -0.857);
\node at (1,1) {$ \color{blue} \alpha $}; 
\node at (2.5,1.2) {$ \color{orange} \beta $}; 
\node at (5.5,0.55) {$ \color{yellow} \delta $}; 
\node at (4,-1.5) {$\color{red} \gamma $};
\node at (0.7,1.5) {$ E_1 $};
\node at (4,1.5) {$ E_2 $};
\end{tikzpicture} \end{center}

the above will correspond to the following graph of groups

\begin{center} \begin{tikzpicture}
\filldraw  

(4,0) circle (2pt) node(1)[align=center, below] {$\pi_1(E_1)$} 
-- node[align=center, below] {$\pi_1(\beta)$} 
(8,0) circle (2pt) node[align=center,  below] {$\pi_1(E_2)$} ; 
\draw (8,0) arc(-90:270:0.6 and 0.6);
\node at (8,1.4) {$\pi_1(\delta)$};
\draw (8,0) arc(-90:270:0.6 and -0.6);
\node at (8,-1.4) {$\pi_1(\gamma)$};
\draw (4,0) arc(-90:270:0.6 and 0.6);
\node at (4,1.4) {$\pi_1(\alpha)$};
\end{tikzpicture}\end{center}

$ $

Next, given a splitting one can associate a Dehn twist to it, which
is an automorphism of the group (this need not be outer in general).

Using these we try and follows the ideas presented for surfaces, and
rewrite the proofs in a similar manner to Demuskin groups, but now
using the language of splittings of a group rather than of curves
on a surface.

The first section explains what is known and not known already, in
arithmetic topology, anabelian geometry and mapping class groups of
surfaces.

The second section gives a more in depth look into splittings of a
group, and the more combinatorial approach of graphs of groups and
Bass-Serre theory.

With the main definitions being that of splittings (definition \ref{def:Group splitting})
and of a Dehn twist (definition \ref{def:Dehn autom}), which will
be the objects of study for section \ref{sec:Main results}.
\begin{rem}
Let us emphasis, that although we restrict ourselves to surface groups
and p-adic fields in this thesis, the point of view of graphs of groups
and splittings can be used to bring geometry to many more groups,
and we think that the point of view we give here can be used to further
the global analogy of arithmetic topology as well.
\end{rem}

In the final section we focus on Demuskin groups (which as mentioned
above, are a family of pro-p groups, which include $G_{K}(p)$ and
the pro-p completion of $\pi_{1}(S)$) and show our

\textbf{Main Results}:

$ $

$ $
\begin{thm*}
Let $G$ be a Demushkin group with $d'$ generators and orientation
character which vanishes mod $p^{r}$ but not mod $p^{r+1}$, and
suppose that $p\neq2$. Then there exists a family of discrete groups
$\{\mathcal{G}_{r'}\}_{r'\geq r}$, such that every splitting of $G$
over $\mathbb{Z}_{p}$ , comes from a splitting over $\mathbb{Z}$
of one of the $\mathcal{G}_{r'}$. Furthermore, the splitting of $G$
over $\mathbb{Z}_{p}$ have a ``nice'' form, in the sense that they
are the pro-p completions of the splittings described in in definition
\ref{def:D split and aut}. (For a more precise statement, see theorem
\ref{thm:curve complex bijection})
\end{thm*}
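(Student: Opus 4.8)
The plan is to reduce the statement about pro-$p$ splittings of the Demushkin group $G$ to a statement about discrete splittings of explicitly constructed groups $\mathcal{G}_{r'}$, using Bass–Serre theory on the pro-$p$ side together with the classification of Demushkin groups by generators and relations. First I would recall the Demushkin presentation: for $p\neq 2$, $G$ has generators $x_1,\dots,x_{d'}$ and a single relation of the form $x_1^{p^{r}}[x_1,x_2][x_3,x_4]\cdots$ (the precise shape depending on whether $d'$ is even or odd, and on the orientation character, which by hypothesis has image $1+p^{r}\mathbb{Z}_p$, i.e. vanishes mod $p^{r}$ but not mod $p^{r+1}$). For each $r'\ge r$ I would define $\mathcal{G}_{r'}$ to be the \emph{discrete} group with the same style of presentation but with the exponent $p^{r}$ replaced by $p^{r'}$ (or more precisely the discrete "Demushkin-type" group of definition \ref{def:D split and aut}), and note that the pro-$p$ completion of $\mathcal{G}_{r}$ is $G$, while the pro-$p$ completions of the $\mathcal{G}_{r'}$ for $r'>r$ are the source of the final corollary about non-isomorphic discrete groups with isomorphic pro-$l$ completions.

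**Next**, the heart of the argument: take an arbitrary splitting of $G$ over $\mathbb{Z}_p$, i.e. an action of $G$ on a pro-$p$ tree with $\mathbb{Z}_p$ (procyclic) edge stabilizers such that $G$ is the fundamental pro-$p$ group of the corresponding finite graph of pro-$p$ groups. I would first reduce to the case of a one-edge splitting (amalgam or HNN), since these are the building blocks; then I want to show any such splitting is \emph{induced} from a discrete one. The key input here is the pro-$p$ rigidity results for discrete groups with Demushkin-type relations proved earlier in the paper: these should say that a pro-$p$ graph-of-groups decomposition of a (pro-$p$ completion of a) Demushkin-type group forces the vertex and edge groups themselves to be pro-$p$ completions of the corresponding discrete pieces, and that the underlying edge element — the image of a generator of the $\mathbb{Z}_p$ edge group — can be conjugated into the discrete group. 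Concretely I would (i) use the cohomological dimension $2$ and Poincaré duality to constrain which subgroups can appear as vertex/edge groups (edge groups are procyclic, so $\mathbb{Z}_p$; vertex groups have $\mathrm{cd}\le 2$ and are themselves Demushkin or free pro-$p$), (ii) use a Mayer–Vietoris / Euler characteristic count in the graph of groups to pin down the combinatorial type, and (iii) invoke the rigidity lemma to descend the splitting element to a primitive element of the discrete group, which by the discrete classification corresponds to an actual simple-closed-curve-type splitting of some $\mathcal{G}_{r'}$.

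**Then** I would assemble the bijection of theorem \ref{thm:curve complex bijection}: show the assignment "discrete splitting of some $\mathcal{G}_{r'}$ $\mapsto$ its pro-$p$ completion" is well-defined (pro-$p$ completion is exact enough on these graphs of groups — one needs the edge groups to be $p$-separable and the decomposition to be "efficient" in the sense of pro-$p$ Bass–Serre theory, which holds for Demushkin-type groups), injective (two discrete splittings with the same pro-$p$ completion are conjugate, again by rigidity), and surjective (the previous paragraph). The "nice form" claim then follows because every pro-$p$ splitting is visibly the completion of one of the explicit discrete splittings of definition \ref{def:D split and aut}. Finally, I would record that applying a Dehn twist (definition \ref{def:Dehn autom}) to such a pro-$p$ splitting makes sense and has infinite order, since it already has infinite order on the discrete level and the completion map on $\mathrm{Out}$ is injective on the relevant cyclic subgroup — though that last point is really the content of the companion statement rather than this theorem.

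**The main obstacle** I anticipate is step (ii)–(iii): controlling the \emph{efficiency} and exactness of pro-$p$ completion for the graph of groups. Pro-$p$ completion does not in general commute with amalgamated products or HNN extensions, and the pro-$p$ tree on which $G$ acts need not be the completion of a discrete tree a priori — one must genuinely \emph{prove} that the pro-$p$ splitting "unfolds" to a discrete one rather than assuming it. This is exactly where the earlier pro-$p$ rigidity results do the work, so the real technical burden is checking that the Demushkin-type relation is rigid enough: that a procyclic subgroup of $G$ over which $G$ splits must, up to conjugacy, be the pro-$p$ closure of a $\mathbb{Z}$-subgroup of the ambient discrete group, and that the two complementary pieces are themselves pro-$p$ completions of discrete Demushkin-type or free groups. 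Once that descent is in hand, the rest is bookkeeping with Bass–Serre trees and the explicit presentations.
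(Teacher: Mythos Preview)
Your overall architecture is correct and matches the paper's: construct the discrete groups $\mathcal{G}_{r'}$, build a forward map from discrete splittings to pro-$p$ splittings via $p$-efficiency, and then argue surjectivity by classifying the possible shapes of a pro-$p$ splitting of $G$ over $\mathbb{Z}_p$. The paper does exactly this in Theorems~\ref{thm: equivariant CC map} and~\ref{thm:curve complex bijection}, together with the $p$-efficiency results (Propositions~\ref{prop:Amalgam is efficient} and~\ref{prop:HNN efficient}).

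The one substantive difference is your step (i)--(iii). You propose to classify the vertex and edge groups of an arbitrary pro-$p$ splitting \emph{from scratch} using $\mathrm{cd}=2$, Poincar\'e duality, and a Mayer--Vietoris/Euler-characteristic count. The paper does not do this internally: it imports the result wholesale from Wilkes' classification of pro-$p$ $PD^2$ pairs (\cite{wilkes2020classification}, Theorem~3.3, and \cite{wilkes2019relative}, Theorem~5.18), which hands you directly that the vertex groups are free pro-$p$ of ranks $2n$ and $2d-2n$ with the edge generator mapping to $x_1^{p^s}[x_1,y_1]\cdots[x_n,y_n]$ and $y_d^{p^{s'}}[y_d,x_d]\cdots[y_{n+1},x_{n+1}]$ for some $s,s'$. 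What remains in the paper is then a short orientation-character argument: the dualizing module of each pair is the restriction from $G$, forcing $s,s'\ge r$, while if both were strictly larger the character of $G$ would vanish mod $p^{r+1}$, a contradiction; hence one of $s,s'$ equals $r$ and the other is the $r'$ labelling which $\mathcal{G}_{r'}$ you land in. Your proposal would have to recover Wilkes' theorem, which is where the real work lies; the paper's route is shorter precisely because it cites it.

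One smaller point: your language of ``descending the splitting element to a primitive element of the discrete group'' suggests you want to lift an individual pro-$p$ splitting to a discrete one. The paper instead works entirely at the level of $\mathrm{Aut}$-orbits: it shows each pro-$p$ splitting lies in the same $\mathrm{Aut}(G)$-orbit as $j(\alpha)$ for a standard discrete $\alpha$, and the bijection in Theorem~\ref{thm:curve complex bijection} is a bijection of orbit sets, not of splittings. This sidesteps any need for a genuine descent argument, and your ``main obstacle'' largely dissolves once you adopt this viewpoint.
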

$ $
\begin{thm*}
Let $G$ and $\mathcal{G}_{r'}$ be as in the previous theorem, then
we have an injection $Out_{D}(\mathcal{G}_{r'})\hookrightarrow Out(G)$
(where the subscript $D$, denotes the subgroup generated by Dehn
twists coming from ``nice'' splittings, as defined in definition
\ref{def:D split and aut})
\end{thm*}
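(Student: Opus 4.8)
The plan is to realize the desired map as functoriality of pro-$p$ completion, to check that Dehn twists are sent to Dehn twists, and then to prove injectivity by a Bass--Serre argument comparing relations among Dehn twists on the discrete and pro-$p$ sides, leaning on Theorem \ref{thm:curve complex bijection}. Let $\iota\colon\mathcal{G}_{r'}\to G$ denote the canonical map into the pro-$p$ completion; it has dense image. Every $\phi\in Aut(\mathcal{G}_{r'})$ extends uniquely to a continuous automorphism $\hat\phi\in Aut(G)$, and $\phi\mapsto\hat\phi$ is a homomorphism carrying inner automorphisms to inner ones (conjugation by $x$ goes to conjugation by $\iota(x)$); hence it descends to $\Phi\colon Out(\mathcal{G}_{r'})\to Out(G)$. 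I claim $\Phi$ restricts to an injection on $Out_{D}(\mathcal{G}_{r'})$.

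First I would check that $\Phi$ sends Dehn twists to Dehn twists. Let $c$ be a nice splitting of $\mathcal{G}_{r'}$ in the sense of Definition \ref{def:D split and aut}, say a one-edge splitting $\mathcal{G}_{r'}=A\ast_{C}B$ (or an HNN extension) with infinite cyclic edge group $C=\langle z\rangle$, and $\tau_{c}$ the associated twist from Definition \ref{def:Dehn autom}. By Theorem \ref{thm:curve complex bijection} the pro-$p$ completion of this graph of groups is again a graph of (pro-$p$) groups with fundamental group $G$, i.e.\ the amalgam/HNN extension is \emph{efficient}, so $\hat c$ is a genuine splitting of $G$ over $\mathbb{Z}_{p}$ and $\tau_{\hat c}$ is defined by the same formulas (identity on $\hat A$, conjugation by $z$ on $\hat B$; resp.\ $t\mapsto tz$). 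Thus $\hat\tau_{c}$ and $\tau_{\hat c}$ agree on the dense subgroup $\iota(\mathcal{G}_{r'})$, hence on $G$, so $\Phi([\tau_{c}])=[\tau_{\hat c}]$. In particular $\Phi(Out_{D}(\mathcal{G}_{r'}))\subseteq Out_{D}(G)\le Out(G)$.

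For injectivity, suppose $\phi\in Out_{D}(\mathcal{G}_{r'})$ with $\Phi(\phi)=1$; lift it to $\phi=\prod_{i}\tau_{c_{i}}^{\,n_{i}}\in Aut(\mathcal{G}_{r'})$, so that $\hat\phi(x)=gxg^{-1}$ for some fixed $g\in G$ and all $x$. The task is to show $\phi$ is already inner in $\mathcal{G}_{r'}$, i.e.\ that passing to the pro-$p$ completion introduces no new relations among the nice Dehn twists. By Fact \ref{fact:scc and split} and Theorem \ref{thm:curve complex bijection}, the curve complex of $\mathcal{G}_{r'}$ sits $\Phi$-equivariantly inside that of $G$; since $\hat\phi$ is inner it acts trivially on the latter, so $\phi$ fixes the conjugacy class of every nice splitting of $\mathcal{G}_{r'}$. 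Now pass to a common refinement $\mathcal{S}$ of the $c_{i}$ together with finitely many further nice splittings chosen to \emph{fill} (no common non-trivial elliptic subgroup), as afforded by the combinatorics of the curve complex. Since $\phi$ preserves every one of these splittings up to conjugacy, a Bass--Serre analysis of the action of $\phi$ on the tree of $\mathcal{S}$ shows that $\phi$ acts as a composition of edge-twists whose class in $Out$ is pinned down by the tuple of twisting parameters along the edges of $\mathcal{S}$; because the $c_{i}$ and their efficient pro-$p$ completions carry the same such parameters, the relation $\hat\phi\in Inn(G)$ forces this tuple to be degenerate in precisely the way that makes $\phi\in Inn(\mathcal{G}_{r'})$. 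In the residual, non-filling cases $\phi$ already fixes some vertex group $G_{v}$ of $\mathcal{S}$ pointwise; then $g\in C_{G}(\hat G_{v})$, and for an essential splitting $\hat G_{v}$ is non-cyclic inside the Poincar\'e-duality pro-$p$ group $G$, so $C_{G}(\hat G_{v})\subseteq Z(G)$, which is trivial in our situation (e.g.\ $d'\ge 3$; the low-rank cases are handled separately), giving $\hat\phi=1$ and hence $\phi=\mathrm{id}$.

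The main obstacle is exactly the middle of the injectivity step: ruling out that a product of discrete nice Dehn twists becomes inner \emph{after} pro-$p$ completion without having been inner already. A general element of $Aut(\mathcal{G}_{r'})$ can easily become inner in $G$, so the argument cannot be formal: it has to use both the explicit Bass--Serre description of Dehn twists and the pro-$p$ rigidity inputs behind Theorem \ref{thm:curve complex bijection} (the identification of the two curve complexes, the efficiency of the relevant graph-of-groups decompositions, and the rigidity of vertex and edge subgroups), and it is here that the hypotheses $p\neq 2$ and the condition on the orientation character --- hence the Demushkin classification --- are genuinely used.
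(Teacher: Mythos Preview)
Your first two paragraphs are fine and match the paper: functoriality of pro-$p$ completion gives $\Phi$, and $p$-efficiency of the nice splittings (Corollary~\ref{cor:p-eff splits}) ensures that $\Phi$ takes $T_c$ to $T_{\hat c}$.

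The gap is in your injectivity argument. You propose to take a common refinement $\mathcal{S}$ of the splittings $c_i$ appearing in a word $\phi=\prod T_{c_i}^{n_i}$, but in general no such refinement exists: the whole point of a nontrivial word in Dehn twists is that the $c_i$ may pairwise intersect (in the sense of Definition~\ref{def:intersection}). Once the refinement is unavailable, the subsequent ``Bass--Serre analysis'' has no tree to act on, and the sentence about the ``tuple of twisting parameters being forced to be degenerate'' is not an argument but a restatement of what needs to be proved. The centralizer computation at the end is correct in isolation but applies only to the very special situation where $\phi$ fixes some vertex group pointwise; you have not reduced the general case to that one.

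The paper's route avoids this entirely. It proves that $Out_D(\mathcal{G}_{r'})$ acts \emph{faithfully} on $Split(G,\mathbb{Z}_p)$ via the equivariant map $j$ (Theorem~\ref{thm: equivariant CC map}, Lemma~\ref{lem:faithful action on pro-p}), and injectivity of $\Phi$ is then immediate: an inner automorphism of $G$ acts trivially on all pro-$p$ splittings. The substance is in showing that a nontrivial $\phi$ moves some pro-$p$ splitting; concretely, one exhibits a splitting $\beta$ intersecting $\alpha$ and then shows that the edge generator $c$ of $\beta$ and its image $T_\alpha^k(c)$ remain non-conjugate \emph{in the pro-$p$ completion}. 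This is a conjugacy $p$-separability statement, and the paper gets it from Paris's results (Lemmas~\ref{lem:p-cong sep} and~\ref{lem:p-cong sep HNN}). That separation input is the genuine content you are missing; without it there is no mechanism preventing a discrete outer automorphism from becoming inner after completion.
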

$ $
\begin{thm*}
Let $G$ be a Demuskin group, let $\alpha$ be a splitting of $G$
over $\mathbb{Z}_{p}$ and let $T_{\alpha}$ denote the Dehn twist
of $\alpha$, which is an automorphism of $G$ (as defined in \ref{def:general Dehn autom}).
Then for every integer $k$, we have that the image of $T_{\alpha}^{k}$
in $Out(G)$ is non-trivial.
\end{thm*}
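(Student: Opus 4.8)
The plan is to argue by contradiction. We may assume $k\neq 0$, since $T_\alpha^{0}=\mathrm{id}$ has trivial image in $\mathrm{Out}(G)$ anyway. Suppose the image of $T_\alpha^{k}$ in $\mathrm{Out}(G)$ were trivial, so that $T_\alpha^{k}=\iota_{g}$ for some $g\in G$, where $\iota_{g}(x)=gxg^{-1}$. Since a Dehn twist is supported on a single edge, I would first collapse the remaining edges of the graph of groups and reduce to the case where $\alpha$ is a one-edge splitting, so that either $G=A\amalg_{C}B$ or $G=\mathrm{HNN}(A,C,\varphi)$ with $C\cong\mathbb{Z}_{p}$. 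By Theorem \ref{thm:curve complex bijection}, every splitting of $G$ over $\mathbb{Z}_{p}$ is the pro-$p$ completion of one of the ``nice'' splittings of Definition \ref{def:D split and aut}; in particular the vertex groups are non-abelian free pro-$p$ groups, the edge group $C$ is topologically generated by a nontrivial element $z$, and (following Definition \ref{def:general Dehn autom}) $T_\alpha$ restricts to the identity on one of the vertex groups, say $A$, while in the amalgam case it is conjugation by $z$ on $B$ and in the HNN case it fixes $A$ and sends the stable letter $t$ to $tz$. Hence $T_\alpha^{k}$ is still the identity on $A$, and $z^{k}\neq 1$ because $\mathbb{Z}_{p}$ is torsion-free.

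The key step is the claim that $C_{G}(A)=\{1\}$ for a non-abelian free pro-$p$ vertex group $A$. Since $G$ is torsion-free, any $1\neq g\in C_{G}(A)$ has $\overline{\langle g\rangle}\cong\mathbb{Z}_{p}$. If $\overline{\langle g\rangle}\cap A=1$, then $\overline{\langle A,g\rangle}\cong A\times\mathbb{Z}_{p}$ contains a copy of $F_{2}\times\mathbb{Z}_{p}$; but $\mathrm{cd}(F_{2}\times\mathbb{Z}_{p})=2=\mathrm{cd}(G)$, so a closed subgroup of cohomological dimension $2$ in the pro-$p$ $\mathrm{PD}^{2}$ group $G$ must have finite index and therefore be $\mathrm{PD}^{2}$ itself, contradicting $\dim_{\mathbb{F}_{p}}H^{2}(F_{2}\times\mathbb{Z}_{p},\mathbb{F}_{p})=2\neq 1$. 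Thus $g^{N}\in A$ for some $N\geq 1$, and then $g^{N}\in C_{A}(A)=Z(A)=1$, contradicting that $g$ has infinite order. Hence $C_{G}(A)=1$.

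Now $T_\alpha^{k}|_{A}=\mathrm{id}_{A}$ and $T_\alpha^{k}=\iota_{g}$ force $g\in C_{G}(A)=1$, i.e.\ $T_\alpha^{k}=\mathrm{id}_{G}$. This is impossible. In the amalgam case, choose $b\in B$ with $[b,z]\neq 1$, which exists because $C_{B}(z)$ is procyclic and hence a proper subgroup of the non-abelian free pro-$p$ group $B$; since $C_{B}(z^{k})=C_{B}(z)$ (any element commuting with $z^{k}$ lies in the procyclic, hence abelian, group $C_{B}(z^{k})$, which already contains $z$), we get $T_\alpha^{k}(b)=z^{k}bz^{-k}\neq b$. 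In the HNN case, $T_\alpha^{k}(t)=tz^{k}\neq t$. Either way $T_\alpha^{k}\neq\mathrm{id}_{G}$, a contradiction, so $T_\alpha^{k}$ is not inner and its image in $\mathrm{Out}(G)$ is nontrivial.

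The main obstacle I anticipate is securing the two structural inputs purely in the pro-$p$ world. First, one must know that the relevant splittings have non-abelian free pro-$p$ vertex groups: this is exactly what the ``nice form'' part of the preceding theorem provides, and it is genuinely needed, since a degenerate splitting of the shape $G=\mathbb{Z}_{p}\amalg_{C}B$ would make the analogous twist inner; such splittings must be ruled out, which should follow from the fact that the resulting group is not an orientable pro-$p$ $\mathrm{PD}^{2}$ group. Second, the centralizer computation $C_{G}(A)=1$ rests on the non-existence of an $F_{2}\times\mathbb{Z}_{p}$ subgroup inside a pro-$p$ $\mathrm{PD}^{2}$ group, i.e.\ on the statement that a closed subgroup of full cohomological dimension in a pro-$p$ Poincar\'e duality group is open and again a Poincar\'e duality group; checking this carefully in the profinite setting, together with torsion-freeness of Demuskin groups and the procyclicity of centralizers in free pro-$p$ groups, is where the real work lies.
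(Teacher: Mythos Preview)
Your argument is correct and takes a genuinely different route from the paper. The paper proves this as Corollary~\ref{cor:main dehn outer} via a detour through the discrete groups $\mathcal{G}_{r'}$: using Theorem~\ref{thm:curve complex bijection} and the conjugation formula (Corollary~\ref{cor:orbits dehn}) it reduces to $\alpha=j(x)$ for a discrete splitting $x$; it then shows $T_x^k$ is outer in $\mathcal{G}_{r'}$ by exhibiting an intersecting splitting $\beta$ with $T_x^k(\beta)\neq\beta$ (Theorems~\ref{thm:HNN discrete Demuskin} and~\ref{thm:amalgam discrete demuskin}); finally it lifts this to $G$ via the conjugacy $p$-separability Lemmas~\ref{lem:p-cong sep} and~\ref{lem:p-cong sep HNN}, which guarantee that $T_x^k$ still moves $j(\beta)$ in the pro-$p$ curve complex (Lemma~\ref{lem:faithful action on pro-p}). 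Your approach stays entirely inside the pro-$p$ world: once the classification of splittings tells you the vertex group $A$ is non-abelian free pro-$p$, the fact that a Demuskin group contains no closed $F_2\times\mathbb{Z}_p$ (infinite-index closed subgroups are free pro-$p$, finite-index ones are Demuskin with $\dim H^2=1$) forces $C_G(A)=1$, so any inner $T_\alpha^k$ is the identity, which you rule out directly. Your proof is considerably shorter and bypasses the $p$-efficiency and conjugacy-separability machinery entirely; the paper's longer route, however, yields strictly more along the way, namely the injection $Out_D(\mathcal{G}_{r'})\hookrightarrow Out(G)$ and the faithful action on $Split(G,\mathbb{Z}_p)$, which are among its main theorems.

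One small caveat: your worry about degenerate vertex groups is well-placed and also arises in the HNN case. For a two-generator Demuskin group ($d=1$, $r<\infty$) the HNN vertex group is $\mathbb{Z}_p$, and one checks that $T_\alpha^{p^r}$ is genuinely inner (conjugation by $x_1^{-1}$). So ``$A$ non-abelian'' really is needed; the paper's own proofs (Theorem~\ref{thm:HNN discrete Demuskin}, Lemma~\ref{lem:p-cong sep HNN}) already tacitly assume $d\geq 2$.
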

$ $
\begin{cor*}
The family $\{\mathcal{G}_{r'}\}_{_{r'\geq r}}$ has infinitely many
isomorphism classes, on the other hand, the pro-l completion of all
of these groups are free pro-l groups on $d'-1$ generators, and the
pro-p completion is the same Demuskin group (namely the one on $d'$
generators, and orientation character which vanishes mod $p^{r}$
but not mod $p^{r+1}$).

$ $

$ $
\end{cor*}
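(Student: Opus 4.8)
The plan is to deduce the statement from the three main theorems by combining a structural input (the existence and construction of the family $\{\mathcal{G}_{r'}\}_{r'\geq r}$) with a rigidity input (the non-triviality of powers of Dehn twists). First I would recall from the first main theorem and from definition \ref{def:D split and aut} the explicit form of each $\mathcal{G}_{r'}$: it is a discrete one-relator group with a Demuskin-type relator whose orientation character vanishes mod $p^{r'}$ but not mod $p^{r'+1}$, so that its pro-$p$ completion is the Demuskin group $G$ on $d'$ generators with orientation character vanishing mod $p^{r}$ but not mod $p^{r+1}$ — this uses that the pro-$p$ completion only ever sees the relator modulo $p^{r}$ once $r'\geq r$, hence all members of the family have the same pro-$p$ completion. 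For the pro-$l$ completion with $l\neq p$, I would observe that in the Demuskin relator the exponent of one distinguished generator becomes a unit in $\mathbb{Z}_l$ (the $p$-power torsion coefficient $p^{r'}$ and the orientation term are invertible mod $l$), so that generator can be eliminated via a Tietze transformation after completing, leaving a free pro-$l$ group on $d'-1$ generators; since this computation does not depend on $r'$, all the completions agree.

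The substantive point is that the family has infinitely many isomorphism classes. Here I would argue by contradiction: if only finitely many isomorphism types occurred, then some isomorphism type would be realized by $\mathcal{G}_{r_1'}$ and $\mathcal{G}_{r_2'}$ with $r_1' < r_2'$, and in fact (repeating) by infinitely many indices. I would then transport the splittings: each $\mathcal{G}_{r'}$ carries a canonical splitting $\alpha_{r'}$ over $\mathbb{Z}$ (the one from definition \ref{def:D split and aut}) whose pro-$p$ completion is the corresponding splitting of $G$, and by the second main theorem we get an injection $Out_D(\mathcal{G}_{r'}) \hookrightarrow Out(G)$ compatible with the Dehn twist $T_{\alpha_{r'}}$. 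The third main theorem says every nonzero power of the Dehn twist $T_{\alpha_{r'}}$ is nontrivial in $Out(G)$. The contradiction I am aiming for is numerical: the "amount" of twisting available in $\mathcal{G}_{r'}$, measured through the edge group $\mathbb{Z}$ sitting inside a vertex group whose abelianization has $p^{r'}$-torsion, grows with $r'$ — concretely, the order of $T_{\alpha_{r'}}$ in the abelianized local picture, or equivalently the index $[Z_{G}(\text{edge}) : \text{edge}]$-type invariant that distinguishes the groups, is governed by $r'$, so two $\mathcal{G}_{r'}$ with different $r'$ cannot be isomorphic because an isomorphism would have to carry the canonical splitting to the canonical splitting (splittings over $\mathbb{Z}$ of a Demuskin-type discrete group being rigid, by the pro-$p$ rigidity results promised earlier in the paper) and hence match these invariants.

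Thus the skeleton is: (1) compute the pro-$l$ completions uniformly for all $l$, getting free pro-$l$ of rank $d'-1$ for $l\neq p$ and the fixed Demuskin $G$ for $l=p$; (2) isolate an isomorphism invariant of $\mathcal{G}_{r'}$ — the residue data of the Demuskin relator, i.e. the integer $r'$ itself, read off from the $p$-primary part of $H_1(\mathcal{G}_{r'};\mathbb{Z})$ or from the structure of its splitting over $\mathbb{Z}$ — and show it genuinely depends on $r'$; (3) conclude infinitely many isomorphism classes. The main obstacle I expect is step (2): $H_1$ alone may not separate the groups (the abelianizations could coincide), so one likely needs the finer rigidity statement that any isomorphism $\mathcal{G}_{r_1'}\cong\mathcal{G}_{r_2'}$ must be compatible with the canonical $\mathbb{Z}$-splittings, and then extract $r'$ from how the edge group $\mathbb{Z}$ embeds (its image in the vertex group, the torsion it cuts out) — this is exactly where the earlier pro-$p$ rigidity results and the classification of Demuskin groups by generators-and-relations have to be invoked, rather than any soft completion argument. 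Once an $r'$-dependent invariant is in hand, finiteness of isomorphism classes is immediately contradicted.
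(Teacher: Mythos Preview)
Your treatment of the pro-$l$ completions is essentially fine and matches the paper: for $l\neq p$ the relator $w_{r'}=x_{1}^{p^{r}}[x_{1},y_{1}]\cdots[x_{d},y_{d}]y_{d}^{-p^{r'}}$ is primitive in the free pro-$l$ group (its image in $\hat F_{l}/[\hat F_{l},\hat F_{l}](\hat F_{l})^{l}$ is nonzero), so one generator drops out; for $l=p$ the Demu\v{s}kin classification gives the same $G$ for every $r'\geq r$. One small correction: it is not that ``the pro-$p$ completion only sees the relator modulo $p^{r}$'' --- rather, the abelianization has torsion $\mathbb{Z}/p^{\min(r,r')}=\mathbb{Z}/p^{r}$, so the Demu\v{s}kin invariant is $p^{r}$ independently of $r'$.

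The real issue is your argument for infinitely many isomorphism classes. You correctly diagnose that $H_{1}$ does not separate the groups (the torsion is $\mathbb{Z}/p^{r}$ for all $r'\geq r$), but your fallback --- transporting the canonical splitting under a putative isomorphism and reading off $r'$ from how the edge group sits --- has no justification. Nothing in the three main theorems forces an abstract isomorphism $\mathcal{G}_{r_{1}'}\cong\mathcal{G}_{r_{2}'}$ to respect the canonical splittings, and the Dehn-twist results only tell you that certain elements of $Out(G)$ are nontrivial; they give no invariant of the \emph{discrete} groups that varies with $r'$. Your ``amount of twisting'' heuristic does not cash out into an actual obstruction.

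The paper takes a completely different and much more elementary route. It invokes a theorem of Rosenberger on the isomorphism problem for one-relator groups, which reduces the question to whether the relators $w_{r'}$ are Nielsen equivalent in the free group. Then Whitehead's algorithm shows immediately that each $w_{r'}$ is already of minimal length in its automorphism orbit, and since these minimal lengths are all distinct (the length grows with $r'$), the relators are pairwise inequivalent. No splitting theory or Dehn twists are used at this step. If you want to salvage your approach, you would need an intrinsic discrete invariant of $\mathcal{G}_{r'}$ that recovers $r'$; the paper's point is that the word-length of the defining relator, certified minimal by Whitehead, is exactly such an invariant.
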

Note that the above theorem gives a large family of non trivial example
of outer automorphisms of $G_{K}(p)$ (which will include the analogue
of the Outer automorphism appearing in \cite{neukirch2013cohomology},
defined in definition \ref{def: neuk outer}, as explained in example
\ref{exa:neuk outer is dehn}).

We also note that all the proofs will also have a geometric flavour
to them, and can be seen as direct generalizations of classical results
about Dehn twists of surfaces.

The proofs of corollary \ref{cor:p-eff splits} and theorems \ref{thm: equivariant CC map},
\ref{thm:curve complex bijection} closely follow the ideas of Wilkes
(\cite{wilkes2020classification,wilkes2017virtual}) and generalize
them to , the main differences being the need to keep track of more
discrete group completing to the given pro-$p$ group, and also not
having some of the geometric tools available for surface groups.

$ $
\begin{rem*}
Other than the theorems mentioned above, the author thinks that another
important contribution of this thesis is by expanding the arithmetic
topology dictionary, and by giving a new and more uniform way for
discussing surfaces and p-adic fields. In a sense, the author sees
this thesis as more of a 'proof of concept' of this way of thinking,
and as a first part in a bigger project of extending results about
surfaces to p-adic fields (and vice versa), which will include further
papers, some of which are already in writing.

Thus, we finish the introduction with a table to help readers find
the corresponding guiding case for surfaces in ``A primer on mapping
class groups'' by Farb and Margalit \cite{farb2011primer}:
\end{rem*}
\begin{center}\begin{small}

\begin{tabular}{|c|c|c|}
\hline 
 & $\begin{array}{c}
\text{Primer on Mapping}\\
\text{class groups }
\end{array}$ \cite{farb2011primer} & Current paper\tabularnewline
\hline 
\hline 
Mapping class group & section 2.1 & $Out(G)$\tabularnewline
\hline 
simple closed curve & section 1.2.2 & splitting of $\pi_{1}$ (def. \ref{def:Group splitting})\tabularnewline
\hline 
intersection of curves & section 1.2.3 & Definition \ref{def:intersection}\tabularnewline
\hline 
The change of coordinates principle & section 1.3 & Theorem \ref{thm:curve complex bijection}\tabularnewline
\hline 
Definition of Dehn twists & section 3.1.1 & Definition \ref{def:Dehn autom}\tabularnewline
\hline 
Formula for the conjugate of a Dehn twist & Fact 3.7 & Lemma \ref{lem:cong formula}\tabularnewline
\hline 
Dehn twists are elements of $Out(\pi_{1})$ & proposition 3.1 & Corollary \ref{cor:main dehn outer}\tabularnewline
\hline 
Dehn twists of non-intersection commute & Fact 3.9 & Lemma \ref{lem:no inters commute}\tabularnewline
\hline 
Curve complex & section 4.1 & Definition \ref{def:Curve complex}\tabularnewline
\hline 
\end{tabular}

\end{small}\end{center}

\subsubsection*{Remark on the structure of the paper}

The current paper is aimed towards both people in group theory and
Number theory, due to that we recall certain basics in both fields.

Subsections \ref{subsec:Arithmetic-topology} and \ref{subsec:anabelian}
give the background on the Number theory motivations that led to the
current paper.

Subsection \ref{subsec:Mapping-class-groups} recalls the basics of
mapping class groups of surfaces, and a general picture of the type
of results the author hopes to get for p-adic fields

Section \ref{sec:Bass-serre} recalls and explains the needed Bass-Serre
theory of graphs of groups, and subsection \ref{subsec:Profinite Bass-serre}
covers the profinite setting for graphs of groups.

Readers interested in just the main theorems presented in this paper,
can skip to section \ref{sec:Main results}, which begins with the
definitions of Poincare groups, followed by the proofs of all the
main theorems.

\subsection*{Acknowledgments}

The current paper was written as part of the author's DPhil at the
University of Oxford, under the supervision of Prof. Konstantin Ardakov
and Prof. Minhyong Kim, and with financial support from the Mathematical
Institute of the University of Oxford.

The author would like to thank his supervisors and the University
of Oxford, and the many people with whom he had many helpful conversations
throughout his DPhil and before it.

In particular he would like to thank his viva examiners, Kobi Kremnitzer
and Tomer Schlank, for helpful comments and corrections. He would
also like to thank Jonathan Fruchter, Ido Grayevsky and Alex Lubotzky,
for helpful discussions on materials related to the current paper,
and to Dani Wise for suggesting the use of Whitehead's algorithm in
the proof of the final corollary.

The author would also like to thank Uri Bader and Tsachik Gelander,
and their Midrasha on groups, where he learned most of his geometric
group theory background.

\tableofcontents{}

\newpage

\section{General Motivation}

\subsection{\label{subsec:Arithmetic-topology}Arithmetic topology }

As stated in the introduction, arithmetic topology draws an analogy
between primes and knots, surfaces and p-adic fields and number fields
and 3-manifolds.

In this subsection we will give a very brief overview, for much more
information see \cite{morishita2011knots}.

For example, class field theory for a number field $K$ can be stated
as a sort of 3-dimensional Poincare duality in the $\acute{e}$tale
cohomology of $Spec(O_{K})$ (\cite{mazur1973notes}). 

Mazur pointed out this line of thought and the analogy between a knot
and prime in \cite{mazur1963remarks}. 

Looking at $\acute{e}$tale homotopy groups for prime fields $\mathbb{F}_{p}=\mathbb{Z}/p\mathbb{Z}$
we get: groups: $\pi_{1}^{et}(Spec(\mathbb{F}_{p}))=\hat{\mathbb{Z}},\pi_{i}^{et}(Spec(\mathbb{F}_{p})=0\text{ (i\ensuremath{\geq2)}}$
(where $\hat{\mathbb{Z}}$ is the pro-finite completion of $\mathbb{Z}$)
and so we regard $Spec(\mathbb{F}_{p})$ as an arithmetic analogue
of a circle $S^{1}$. 

Since $Spec(\mathbb{Z})$ has $\acute{e}$tale cohomological dimension
3 (up to 2-torsion) and $\pi_{1}^{et}(Spec(\mathbb{Z}))=1$, it can
be viewed as the 3-sphere.

We thus get that the embeddings $Spec(\mathbb{F}_{p})\hookrightarrow Spec(\mathbb{Z})$
can be viewed as the arithmetic analogue of a knot, i.e. an embedding
$S^{1}\hookrightarrow S^{3}$. 

The analogies between knots and primes, 3-manifolds and number rings
were later studied more by Kapranov (\cite{kapranov1996analogies}), Reznikov (\cite{reznikov1997three},\cite{reznikov2000embedded}), Morishita (\cite{morishita2011knots}) and many others.

In the view of the analogy above, a knot group $G_{K}=\pi_{1}(S^{3}\backslash K)$)
corresponds to a \textquotedblleft prime group\textquotedblright{}
$G_{\{(p)\}}=\pi_{1}^{et}(Spec(\mathbb{Z})\backslash\{(p)\})$.

More generally, a link $L$ corresponds to a finite set of primes,
$S$, and the link group $G_{L}=\pi_{1}(S^{3}\backslash L$) corresponds
to $G_{S}=\pi_{1}^{et}(Spec(\mathbb{Z})\backslash S)$, the Galois
group of the maximal Galois extension of $\mathbb{Q}$ unramified
outside $S\cup\{\infty\}$. Under this we see that the absolute Galois
group of $\mathbb{Q}$ can be seen as the fundamental group of the
3-sphere minus infinitely many knots. 

One has Kapranov-Reznikov-Mazur-Morishita dictionary:

\begin{center}\begin{small}

\begin{tabular}{|c|c|}
\hline 
Arithmetic & Topology\tabularnewline
\hline 
\hline 
$\begin{array}{c}
Spec(O_{K})\cup\{\infty\}\\
Spec(\mathbb{Z})\cup\{\infty\}
\end{array}$  & $\begin{array}{c}
\text{3 manifold }M\\
S^{3}
\end{array}$\tabularnewline
\hline 
$\begin{array}{c}
Spec(\mathbb{F}_{\mathfrak{p}})\subset Spec(O_{K})\\
\text{primes \ensuremath{\mathfrak{p}_{1},...,\mathfrak{p}_{n}}}\\
\\
\end{array}$ & $\begin{array}{c}
\text{Knot \ensuremath{K\subset M}}\\
\text{Link }K_{1}\cup...\cup K_{n}\\
\text{}
\end{array}$\tabularnewline
\hline 
$\begin{array}{c}
\mathfrak{p}\text{-adic integers }Spec(O_{K_{\mathfrak{p}}})\\
\pi_{1}^{et}(Spec(O_{K_{\mathfrak{p}}}))=<\sigma>
\end{array}$ & $\begin{array}{c}
\text{tubular neighborhood of K, }V(K)\\
\pi_{1}(V(K))=<\beta>\text{ }
\end{array}$\tabularnewline
\hline 
$\begin{array}{c}
\mathfrak{p}\text{-adic field }Spec(K_{\mathfrak{p}})\\
\pi_{1}^{et,tame}(Spec(K_{\mathfrak{p}}))=<\sigma,\tau:\tau^{p-1}[\tau,\sigma]=1>
\end{array}$ & $\begin{array}{c}
\text{the torus }\partial V(K)\\
\pi_{1}(\partial V(K))=<\alpha,\beta:[\alpha,\beta]=1>
\end{array}$\tabularnewline
\hline 
class group $H_{K}$ & $H_{1}(M,\mathbb{Z})$\tabularnewline
\hline 
units $O_{K}^{\times}$ & $H_{2}(M,\mathbb{Z})$\tabularnewline
\hline 
$\begin{array}{c}
\text{Maximal Galois group unramified outside S:}\\
\pi_{1}^{et}(Spec(O_{K})\backslash\{(\mathfrak{p}_{1},...,\mathfrak{p}_{n})\}
\end{array}$ & $\begin{array}{c}
\text{Link group:}\\
\pi_{1}(M\backslash K_{1}\cup..\cup K_{n})
\end{array}$\tabularnewline
\hline 
Legendre symbol & Linking number\tabularnewline
\hline 
p-adic L-function & Alexander polynomial of $K$\tabularnewline
\hline 
\end{tabular}

\end{small}\end{center}

Under this, a lot of the local structure is not accounted for. For
example it only looks at the tamely ramified part, and always corresponds
a p-adic field to a genus 1 surface. 

We hope to enrich the above dictionary in this paper to take into
account for all the Galois group of a p-adic field, and to associate
different genus surfaces, depending on the degree of the extension
over $\mathbb{Q}_{p}$.

\subsection{\label{subsec:anabelian}Rigidity and anabelian geometry}

Another interesting setting where the analogy between arithmetic and
topology continues, and is of special interest to us is Mostow rigidity
theorem and Uchida-Neukirch \cite{neukirch1969kennzeichnung}\cite{uchida1976isomorphisms}
theorem in anabelian geometry.

First we have the following rigidity theorem of Mostow:
\begin{thm}
Let ${\displaystyle M}$ and $N$ be complete finite-volume hyperbolic
manifolds of dimension $n\geq3$.

If $\pi_{1}(M)\cong\pi_{1}(N)$, then $M$ and $N$ are isometric.
\end{thm}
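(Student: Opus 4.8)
\emph{Proof proposal.} The plan is to run the classical boundary-rigidity argument of Mostow and Margulis, with Prasad's refinement handling the cusped case. Write $G = \mathrm{Isom}(\mathbb{H}^{n})$, realise $M = \Gamma\backslash\mathbb{H}^{n}$ and $N = \Gamma'\backslash\mathbb{H}^{n}$ for lattices $\Gamma,\Gamma'\le G$, and let $\phi\colon\Gamma\to\Gamma'$ be the given isomorphism. It suffices to produce $g\in G$ with $\phi(\gamma)=g\gamma g^{-1}$ for all $\gamma\in\Gamma$, since such a $g$ descends to an isometry $M\to N$. The first step is to build a $\phi$-equivariant quasi-isometry $F\colon\mathbb{H}^{n}\to\mathbb{H}^{n}$. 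In the compact case this is immediate from the \v{S}varc--Milnor lemma: $\Gamma$ and $\Gamma'$ both act cocompactly, hence are each quasi-isometric to $\mathbb{H}^{n}$, and gluing these quasi-isometries $\phi$-equivariantly gives $F$. In the finite-volume case one first observes that $\phi$ must carry the conjugacy classes of maximal parabolic subgroups of $\Gamma$ (the peripheral subgroups, group-theoretically recognisable as the maximal virtually-$\mathbb{Z}^{n-1}$ subgroups) bijectively to those of $\Gamma'$, and uses this matching of cusps to construct $F$ on the thick part and extend it across cusp neighbourhoods.

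The second step is to pass to the ideal boundary. A quasi-isometry of the Gromov-hyperbolic space $\mathbb{H}^{n}$ extends canonically to a homeomorphism $\partial F\colon S^{n-1}\to S^{n-1}$ of the sphere at infinity, and $\partial F$ intertwines the boundary actions of $\Gamma$ and $\Gamma'$ by M\"obius transformations via $\phi$. Crucially, $\partial F$ is \emph{quasiconformal}, and this is exactly where the hypothesis $n\ge 3$ enters: the boundary sphere has dimension $n-1\ge 2$, so quasiconformal geometry on it is rigid, whereas for $n=2$ the boundary is $S^{1}$, the theorem fails, and the space of such deformations is Teichm\"uller space.

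The heart of the argument, and the step I expect to be the main obstacle, is upgrading $\partial F$ from quasiconformal to conformal. By the regularity theory of quasiconformal homeomorphisms in dimension $\ge 2$ (Gehring, Mostow), $\partial F$ is differentiable almost everywhere, with a measurable conformal-distortion function $K_{\partial F}$. Since $\partial F$ is $\phi$-equivariant, $K_{\partial F}$ is a $\Gamma$-invariant measurable function; ergodicity of the $\Gamma$-action on $S^{n-1}$ (equivalently, ergodicity of the geodesic flow on the unit tangent bundle of $M$, which uses $\mathrm{Vol}(M)<\infty$) forces it to be constant almost everywhere. A further analysis of the derivative cocycle then shows the almost-everywhere derivative of $\partial F$ is a.e.\ a similarity, so $\partial F$ is $1$-quasiconformal, hence M\"obius, hence equal to $\partial g$ for a single $g\in G$. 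Continuity upgrades this to $\partial F=\partial g$ on all of $S^{n-1}$, and the equivariance of $\partial F$ then reads $\phi(\gamma)=g\gamma g^{-1}$.

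Finally, $g\in G=\mathrm{Isom}(\mathbb{H}^{n})$ conjugates $\Gamma$ to $\Gamma'$ and realises $\phi$, so it descends to an isometry $\Gamma\backslash\mathbb{H}^{n}\to\Gamma'\backslash\mathbb{H}^{n}$, i.e.\ $M\cong N$. An alternative that sidesteps quasiconformal analysis is the Gromov--Thurston approach: since $M$ and $N$ are aspherical with isomorphic $\pi_{1}$ they are homotopy equivalent, such an equivalence preserves the fundamental class, and proportionality of volume to the Gromov simplicial norm gives $\mathrm{Vol}(M)=\mathrm{Vol}(N)$; a straightening/efficiency estimate then shows the boundary map must send regular ideal $n$-simplices to regular ideal $n$-simplices, which again pins it down to a M\"obius map. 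Either way the substance is the rigidity of the induced boundary map, and the role of $n\ge 3$ is the same.
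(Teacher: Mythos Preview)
Your outline is a faithful sketch of the classical Mostow--Prasad argument (quasi-isometry to boundary quasiconformal map, ergodicity to force conformality, hence conjugation in $\mathrm{Isom}(\mathbb{H}^n)$), and the Gromov--Thurston alternative you mention is also standard. There is nothing mathematically wrong with it.

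However, there is no comparison to make: the paper does not prove this theorem. It is stated in the motivational section on rigidity and anabelian geometry purely as background, to set up the analogy with the Neukirch--Uchida theorem for number fields and to frame the later discussion of why $p$-adic fields, like surfaces, \emph{fail} such rigidity. No proof or even proof sketch is given in the paper; Mostow rigidity is simply quoted as a known result. So while your write-up is correct, it is supplying content the paper deliberately omits rather than reproducing or diverging from anything the paper does.
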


Similarly in anabelian geometry, one studies how much information
about a space $X$ is contained already in its first $\acute{e}$tale
homotopy group $\pi_{\acute{e}t}^{1}(X,x)$.

The term \textquotedblleft anabelian\textquotedblright{} is comes
from the idea that \textquotedblleft the less abelian $\pi_{\acute{e}t}^{1}(X,x)$
is, the more information it carries about $X$\textquotedblright .

We want $\pi_{\acute{e}t}^{1}(X,x)$ to be very not abelian, one example
that helps see why we want this is that of elliptic curves.

Let $E$ be an elliptic curve over an algebraically closed field of
characteristic 0, it has $\pi_{\acute{e}t}^{1}(E,x)=\hat{\mathbb{Z}}^{2}$,
where we take any base geometric point $x$. Thus we have, $\pi_{\acute{e}t}^{1}(E,x)$
does not distinguish between elliptic curves. Similarly, for finite
fields $\mathbb{F}_{p}$ we have $\pi_{\acute{e}t}^{1}(\mathbb{F}_{p},x)=\hat{\mathbb{Z}}$,
and thus again, $\pi_{\acute{e}t}^{1}$ does not distinguish between
finite fields.

An algebraic variety whose isomorphism class is entirely determined
by $\pi_{\acute{e}t}^{1}(X,x)$ is called an anabelian variety.

Uchida \cite{uchida1976isomorphisms} and Neukirch \cite{neukirch1969kennzeichnung},
proved that Number fields are anabelian. This can be seen as an arithmetic
analogue of Mostow\textquoteright s rigidity theorem:
\begin{thm}
Let ${\displaystyle F}$ and $F'$ be number fields.

If $\pi_{\acute{e}t}^{1}(F)=Gal(\bar{F}/F)\cong Gal(\bar{F'}/F')=\pi_{\acute{e}t}^{1}(F')$,
then $F'$ and $F$ are isomorphic, and we have a bijection:

\[
Isom_{\mathbb{Q}}(F,F')\rightarrow Out(G{}_{F},G_{F'})
\]
\end{thm}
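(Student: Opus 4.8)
The plan is to reproduce the Neukirch--Uchida argument: from the profinite group $G_{F}$ one reconstructs, in turn, (i) the set of places of $F$ together with the completions, (ii) the multiplicative group $F^{\times}$ as a Galois module equipped with all valuations, and (iii) the full field structure of $F$; one then checks that this reconstruction is natural enough to give the asserted bijection. The main obstacle is step (i): producing a purely group-theoretic description of the local decomposition subgroups inside $G_{F}$, i.e. identifying ``the $p$-adic points'' intrinsically.

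\textbf{Step 1 (local theory inside the global group).} The delicate point is to characterize, group-theoretically, the decomposition subgroups $D_{\mathfrak{p}}\subset G_{F}$ at the non-archimedean places. Following Neukirch, one shows these are exactly the closed subgroups that are maximal among those ``of local type'': subgroups possessing an open subgroup which, cohomologically --- Poincar\'e duality of dimension $2$, the Euler--Poincar\'e characteristic, and the structure of the maximal abelian quotient coming from local class field theory --- behaves like the absolute Galois group of a finite extension of some $\mathbb{Q}_{\ell}$. One must further show that the residue characteristic $\ell$ and the ramification of $\mathfrak{p}$ over $\mathbb{Q}$ are visible in this data, so that any continuous isomorphism $\sigma\colon G_{F}\xrightarrow{\sim}G_{F'}$ carries each $D_{\mathfrak{p}}$ to some $D_{\mathfrak{p}'}$ with $\mathfrak{p}'=\sigma(\mathfrak{p})$ of the same residue characteristic; this yields a bijection between the places of $F$ and of $F'$ and isomorphisms of completions $F_{\mathfrak{p}}\cong F'_{\mathfrak{p}'}$ at every place. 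This is the technical heart of the whole proof.

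\textbf{Step 2 (recovering $F^{\times}$ and the arithmetic).} Next one uses Kummer theory together with the Brauer group --- equivalently the compatibility of global class field theory, $G_{F}^{\mathrm{ab}}\cong$ completed id\`ele class group, with its local counterparts along the $D_{\mathfrak{p}}$ identified in Step 1 --- to reconstruct the $G_{F}$-module $\varinjlim_{L}L^{\times}$ over finite subextensions $L/F$, hence $F^{\times}$ itself as an abstract group, together with each normalized valuation $v_{\mathfrak{p}}\colon F^{\times}\to\mathbb{Z}$ and each residue map. All of this is read off the abelianizations of open subgroups of $G_{F}$ and their interaction with the decomposition subgroups, so the reconstruction is functorial in $\sigma$.

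\textbf{Step 3 (reconstructing the field and upgrading to the bijection).} Finally, Uchida's lemma recovers the \emph{additive} structure of $F$ from $F^{\times}$ equipped with this family of valuations: addition is determined once one knows the subset $\{x:x,\,1-x\in F^{\times}\}$ and the involution $x\mapsto 1-x$, which is pinned down by congruence conditions of the form $v_{\mathfrak{p}}(x-1)\ge 1$ that have been made group-theoretically detectable, all remaining sums reducing to this case by multiplicative scaling. This produces a field isomorphism $F\cong F'$, and naturality in $\sigma$ shows the map $\mathrm{Isom}_{\mathbb{Q}}(F,F')\to\mathrm{Out}(G_{F},G_{F'})$ is surjective. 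Injectivity is then formal: a field isomorphism inducing the trivial outer class gives an automorphism of the reconstructed module $F^{\times}$ that is inner, hence trivial on $F^{\times}$ and therefore on $F$. Besides Step 1, the only other place requiring genuinely arithmetic --- as opposed to formal --- input is this additive reconstruction in Step 3.
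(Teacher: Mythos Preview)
The paper does not prove this theorem at all: it is stated purely as background, with the proof outsourced to the citations \cite{neukirch1969kennzeichnung} and \cite{uchida1976isomorphisms}. So there is no ``paper's own proof'' to compare against.

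For what it is worth, your sketch is a faithful outline of the standard Neukirch--Uchida argument (local decomposition groups recognized group-theoretically, class field theory to recover the multiplicative structure, Uchida's additive reconstruction), and nothing in it is wrong at the level of a plan. But since the paper treats this result as a black box, any proof you supply here goes beyond what the paper itself does.
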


A similar statement was proven for p-adic fields by Mochizuki \cite{mochizuki1997version}:
\begin{thm}
\label{thm: Moch anab}Let $K$ and $K'$ be finite extensions of
$\mathbb{Q}_{p}$. Let $Isom_{\mathbb{Q}_{p}}(K,K')$ denote the set
of $\mathbb{Q}_{p}$-algebra isomorphisms of $K$ with $K'$.

Let $Out_{Filt}(G_{K},G_{K'})$ denote the set of outer isomorphisms
of filtered groups between the absolute Galois groups of $K$ and
$K'$ equipped with the filtrations defined by the higher (i.e., with
index > 0) ramification groups in the upper numbering.

Then the natural morphism
\[
Isom_{\mathbb{Q}_{p}}(K,K')\rightarrow Out_{Filt}(G{}_{K},G_{K'})
\]
 induced by \textquotedblleft looking at the morphism induced on absolute
Galois groups\textquotedblright{} is a bijection.
\end{thm}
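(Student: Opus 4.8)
The map in question sends a $\mathbb{Q}_p$-isomorphism $f\colon K\xrightarrow{\sim}K'$ to the outer class of the induced isomorphism of absolute Galois groups; the plan is to treat injectivity and surjectivity separately, and to make the ramification filtration tractable by pushing everything through local class field theory. For injectivity it suffices to show that a $\mathbb{Q}_p$-automorphism $\tau$ of $K'$ whose induced automorphism of $G_{K'}$ is inner must be the identity. Passing to the Galois closure of $K'/\mathbb{Q}_p$ we may lift $\tau$ to an element $g\in G_{\mathbb{Q}_p}$ with $gG_{K'}g^{-1}=G_{K'}$, and the hypothesis becomes that $h^{-1}g$ centralises $G_{K'}$ for some $h\in G_{K'}$. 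Since $G_{K'}$ is center-free --- being a non-abelian Demushkin-type decomposition group --- its centraliser in $G_{\mathbb{Q}_p}$ meets $G_{K'}$ trivially, hence is discrete and therefore finite; as $G_{\mathbb{Q}_p}$ is torsion-free ($\mathbb{Q}_p$ is not formally real), this centraliser is trivial, whence $g=h\in G_{K'}$ and $\tau=\mathrm{id}$.

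For surjectivity, fix a filtration-preserving outer isomorphism $\phi\colon G_K\to G_{K'}$ and try to reconstruct $K$ in stages. First, apply the functorial reciprocity isomorphism $\mathrm{rec}_K\colon\widehat{K^{\times}}\xrightarrow{\sim}G_K^{\mathrm{ab}}$; by the Hasse--Arf theorem the upper-numbering ramification filtration of $G_K^{\mathrm{ab}}$ corresponds to the filtration of $\widehat{K^{\times}}$ by the higher unit groups $U_K^{(i)}=1+\mathfrak m_K^{i}$, so $\phi$ induces a filtered isomorphism $\widehat{K^{\times}}\to\widehat{K'^{\times}}$ carrying $U_K^{(i)}$ onto $U_{K'}^{(i)}$. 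From this filtered group one reads off the residue characteristic $p$ (the prime for which $U^{(1)}$ is pro-$p$), the residue cardinality $q=|\mathcal O^{\times}/U^{(1)}|+1$, the torsion subgroup $\mu_{p^{\infty}}(K)$ of $U^{(1)}$, the absolute ramification index $e$ and hence the degree $n=ef$ (from the $\mathbb{Z}_p$-module structure $U^{(1)}\cong\mu_{p^{\infty}}(K)\times\mathbb{Z}_p^{\,n}$ together with the location of the filtration jumps), and --- since the $G_K$-action on roots of unity is visible inside the torsion of $\widehat{K^{\times}}$ --- that $\phi$ is compatible with the cyclotomic characters up to the normalisation forced by $n$. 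The final and decisive stage is to upgrade this multiplicative and Galois-theoretic data to an honest isomorphism of \emph{fields}: one reconstructs the additive structure of $K$ from $G_K$ by $p$-adic Hodge theory --- realising $K$ as $\widehat{\overline K}^{\,G_K}$ (Tate) or inside $B_{\mathrm{dR}}^{G_K}$ --- shows that $\phi$, together with its compatibility with the ramification filtration and with the decomposition and inertia subgroups of every finite subextension, induces a compatible map of these period-ring constructions, and descends to an isomorphism $K\cong K'$ whose associated outer isomorphism of Galois groups is the class of $\phi$.

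The real obstacle is this last stage: the filtered group $\widehat{K^{\times}}$, and indeed the bare group $G_K$, do \emph{not} determine $K$ as a field, so one genuinely needs the ramification filtration on the \emph{full} group $G_K$ together with a Hodge--Tate-style argument in order to recover the additive structure, and one must verify that the outer-automorphism ambiguity is precisely what gets absorbed by inner automorphisms. This is exactly the content of Mochizuki's theorem, and in the body of the paper we invoke \cite{mochizuki1997version} rather than reproving it.
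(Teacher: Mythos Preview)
The paper does not give a proof of this theorem at all: it is stated as a background result of Mochizuki, with the citation \cite{mochizuki1997version}, and is used only to motivate the later discussion of outer automorphisms of $G_K$. Your final paragraph already recognises this, so in that sense your proposal agrees with the paper.

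As for the content of your sketch: the injectivity argument is essentially correct --- the key inputs are that $G_{K'}$ has trivial centre and that $G_{\mathbb{Q}_p}$ is torsion-free, both of which are standard facts, and together they force the centraliser of the open subgroup $G_{K'}$ in $G_{\mathbb{Q}_p}$ to be trivial. For surjectivity you correctly isolate what can be read off from local class field theory and the Hasse--Arf description of the abelian ramification filtration (residue field, degree, higher unit filtration, cyclotomic character), and you correctly identify that the genuine difficulty is recovering the \emph{additive} structure of $K$, which is precisely where Mochizuki's $p$-adic Hodge-theoretic argument enters. One small caution: writing ``$K=\widehat{\overline K}^{\,G_K}$'' is the Ax--Sen--Tate theorem, but it does not by itself let you reconstruct $K$ from the abstract group $G_K$, since you would first need to build $\mathbb{C}_p$ together with its $G_K$-action purely from $G_K$; Mochizuki's argument is more delicate than this and really does use the full ramification filtration on the non-abelian group. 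Your proposal is honest about invoking \cite{mochizuki1997version} for this step, which is exactly what the paper does.
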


The extra condition on the ramification filtration is a necessary
one, and there are non isomorphic p-adic fields with isomorphic absolute
Galois groups.
\begin{example}
The fields $\mathbb{Q}_{p}(\mu_{p},\sqrt[p]{p})$ and $\mathbb{Q}_{p}(\mu_{p},\sqrt[p]{p+1})$
are not isomorphic, but have isomorphic absolute Galois groups (this
example and more can be found in \cite{jarden1979characterization}). 
\end{example}

In fact there are non trivial elements in $Out(G_{K})$ for any p-adic
fields $K$, as we will explain in the next subsection.

This again fits with the topological case in the sense that p-adic
fields can be thought as analogues of surfaces.

In the surface setting we do not have Mostow's rigidity, and $Out(\pi_{1}(S))$
for a surface $S$ is a very interesting object, also known as the
mapping class group of $S$.
\begin{rem}
In 1984 Grothendieck\cite{grothendieck1997esquisse}, made the following
important conjecture which motivated the theory of anabelian geometry:
\end{rem}

\begin{conjecture}
\label{conj:Anabelian conj}All hyperbolic curves over number fields
are anabelian varieties.
\end{conjecture}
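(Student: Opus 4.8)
This is Grothendieck's anabelian conjecture for curves; it is now a theorem, due to Tamagawa (for affine curves over fields finitely generated over $\mathbb{Q}$) and Mochizuki (in general, over sub-$p$-adic base fields), and the plan is to follow their strategy, which reduces the desired bijection $Isom_{F}(X,Y)\to Out(\pi_{1}(X),\pi_{1}(Y))$ for hyperbolic curves $X,Y/F$ to an explicit reconstruction of the curve from the fundamental exact sequence $1\to\pi_{1}(\bar{X})\to\pi_{1}(X)\to G_{F}\to1$. First I would replace the $Isom$-bijection by the stronger ``$Hom$-form'': a functorial reconstruction of the scheme $X$ from the profinite datum $\pi_{1}(X)\to G_{F}$ under which dominant morphisms of curves correspond to open continuous $G_{F}$-homomorphisms. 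Once such a reconstruction is in hand, injectivity, essential surjectivity and --- crucially --- compatibility with composition all follow formally, so that the statement automatically controls all powers of a given automorphism, not just a single one. I would also note that a number field is automatically \emph{sub-$p$-adic} (it embeds into a finite extension of $\mathbb{Q}_{p}$), so Mochizuki's theorem applies to the case at hand directly.

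The reconstruction itself I would carry out in two stages. For \emph{affine} hyperbolic curves I would follow Tamagawa: first characterize group-theoretically, inside $\pi_{1}(X)$, the decomposition and inertia subgroups attached to the cusps and to the closed points of good reduction --- using the weight filtration on $\pi_{1}(\bar{X})$, the cyclotomic character, and the Galois action to recognise which procyclic subgroups ``are'' inertia at a puncture --- and then recover the function field $F(X)$ together with all of its decomposition data inside the generic-point tower, at which point the Neukirch--Uchida theorem recalled above recovers $F(X)$, hence $X$ itself as the unique smooth projective model minus the cusps. For \emph{proper} hyperbolic curves there is no canonical puncture to delete, so here I would instead invoke Mochizuki's argument: pass to the maximal pro-$p$ quotient of $\pi_{1}(X)$ and bring in $p$-adic Hodge theory --- the comparison isomorphism between $p$-adic \'etale and de Rham cohomology --- to rigidify the Hodge-theoretic data and reconstruct the curve over a $p$-adic completion of $F$, then descend.

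The step I expect to be the genuine obstacle is this last one: bridging the gap between the abstract profinite group $\pi_{1}(X)$ and the algebro-geometric scheme. Tamagawa's recognition of decomposition groups is delicate and uses essentially that the curve is non-proper, and the proper case then forces one into $p$-adic Hodge theory and the fine structure of the pro-$p$ section map, which puts the result well beyond the Bass--Serre and Demuskin-group techniques developed in the present paper. For that reason I would record Conjecture \ref{conj:Anabelian conj} only as motivating context, emphasising that the methods here operate one level down --- on the local maximal pro-$p$ quotient $G_{K}(p)$, where anabelian rigidity genuinely \emph{fails}, and where it is precisely the resulting infinite-order outer automorphisms (the ``arithmetic Dehn twists'') that we are after.
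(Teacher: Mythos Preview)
The paper contains no proof of this statement at all: Conjecture~\ref{conj:Anabelian conj} is stated purely as historical and motivational context in the background section on anabelian geometry, and the paper's only comment on its status is the single sentence immediately following it, namely that ``for algebraic curves over finite fields, over number fields and over p-adic field the statement was eventually completed by Mochizuki \cite{mochizuki1996profinite}.'' There is nothing further to compare your proposal against.

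Your sketch of the Tamagawa--Mochizuki strategy is a reasonable high-level summary of how the conjecture was actually settled, and your final paragraph in fact lands on exactly what the paper does: record the conjecture as motivating context and cite the literature, since the techniques of the present paper (Bass--Serre theory, Demu\v{s}kin groups, Dehn twists) operate in a setting where anabelian rigidity \emph{fails} and are not aimed at establishing it. So the appropriate ``proof'' here is simply a citation, which is what both you and the paper arrive at; the several paragraphs of proof outline you give before that are accurate but superfluous for the purposes of this paper.
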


For algebraic curves over finite fields, over number fields and over
p-adic field the statement was eventually completed by Mochizuki \cite{mochizuki1996profinite}.

\subsubsection{explicit outer automorphism}

In this subsection we will go over a structure theorem for Galois
groups of p-adic fields, which will later be used to construct outer
morphisms of $G_{\mathbb{Q}_{p}}$ (this can be found in \cite{neukirch2013cohomology},
end of Chapter 7).
\begin{thm}[(Jannsen-Wingberg)]
\label{thm:Jannsen-Wingberg} Let $K$ be a p-adic field, with $N=[K:\mathbb{Q}_{p}]$,
$q$ the cardinality of the residue field of $K$, $p^{s}$ the order
of the group of all $p$-power roots of unity in $K^{tr}$, the maximal
tamely ramified extension of $K$.

The group $Gal(K^{tr}/K)$ is generated by $\sigma,\tau$ satisfying
$\sigma\tau\sigma^{-1}=\tau^{q}$, and denote by $g,h\in\mathbb{Z}_{p}$
the image of $\sigma,\tau$ respectively, under the cyclotomic character.

$ $

Then the group $G_{K}$ has $N+3$ generators (where $N=[K:\mathbb{Q}_{p}]$),
$\sigma,\tau,x_{0},...,x_{N}$, with the following relations/conditions:

A) The closed normal subgroup generated by $x_{0},...,x_{N}$ is a
pro-$p$-group.

B) The elements $\sigma,\tau$ satisfy the tame relation $\sigma\tau\sigma^{-1}=\tau^{q}$.

C) i) If N is even $x_{0}^{\sigma}=<x_{0},\tau>^{g}x_{1}^{p^{s}}[x_{1},x_{2}]...[x_{N-1},x_{N}]$

ii) If N is odd $x_{0}^{\sigma}=<x_{0},\tau>^{g}x_{1}^{p^{s}}[x_{1},y_{1}][x_{2},x_{3}]...[x_{N-1},x_{N}]$

where $<x_{0},\tau>=(x_{0}^{h^{p-1}}\tau x_{0}^{h^{p-2}}...x_{0}^{h}\tau)^{\frac{\pi}{p-1}}$
and where $y_{1}$ is a certain element in the subgroup generated
by $x_{1},\sigma,\tau$.
\end{thm}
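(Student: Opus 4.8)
The plan is to treat this as the (deep) theorem of Jannsen and Wingberg and to assemble it from the standard structure theory of local Galois groups; I would not expect a short self-contained argument. The three pillars are: (i) the maximal tamely ramified quotient $G_K^{\mathrm{tr}} := \mathrm{Gal}(K^{\mathrm{tr}}/K)$, which is the profinite group on $\sigma,\tau$ with the single relation $\sigma\tau\sigma^{-1}=\tau^q$ ($\sigma$ a Frobenius lift, $\tau$ a generator of tame inertia, which is pro-(prime-to-$p$)); (ii) the wild inertia $W := \mathrm{Gal}(\bar K/K^{\mathrm{tr}})$, which by Iwasawa's theorem is a free pro-$p$ group; and (iii) the cohomological finiteness of $G_K$: it has strict cohomological dimension $2$ and satisfies local Tate duality, so that $H^2(G_K,\mathbb{F}_p)\cong\mathbb{F}_p$ and the cup product on $H^1(G_K,\mathbb{F}_p)$ is a perfect pairing. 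I would organise the proof around the extension
\[
1 \longrightarrow W \longrightarrow G_K \longrightarrow G_K^{\mathrm{tr}} \longrightarrow 1,
\]
lifting a presentation of $G_K^{\mathrm{tr}}$ and adjoining generators and relations for $W$.

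First I would produce the generators. By Kummer theory together with local class field theory, $H^1(G_K,\mathbb{F}_p)\cong K^\times/(K^\times)^p$ has dimension $N+2$ when $\mu_p\subset K$ (one handles the general case by carrying the integer $p^s$, the order of the $p$-power roots of unity in $K^{\mathrm{tr}}$). Taking $\sigma,\tau$ as above accounts for two of these classes, and I would choose $x_0,\dots,x_N$ topologically generating $W$ as a normal subgroup of $G_K$; of these $N+1$ elements only $N$ are needed for a minimal generating set, the extra $x_0$ being the ``redundant'' generator that lets the defining relation be written symmetrically (exactly as one extra generator appears in the classical Demushkin presentation). The exponents $g,h\in\mathbb{Z}_p$ in relation (C) are the images of $\sigma,\tau$ under the cyclotomic character, i.e. they record the $G_K^{\mathrm{tr}}$-action on the $\mu_{p^\infty}$-part, which is what governs how $\sigma$ and $\tau$ conjugate the wild generators.

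Then I would establish the relations. Relation (B) is the tame relation pulled back, and (A) records that the normal closure of the $x_i$ is the pro-$p$ group $W$. The substance is relation (C): its reduction modulo $p$ is precisely the Demushkin relation of the maximal pro-$p$ quotient $G_K(p)$, which (for $\mu_p\subset K$) is a Demushkin group of rank $N+2$ by the classification of Demushkin, Serre and Labute. Upgrading this to the stated integral, cyclotomically twisted form requires (a) computing the $G_K^{\mathrm{tr}}$-action on the pro-$p$ inertia via the cyclotomic character --- this is where $\langle x_0,\tau\rangle = (x_0^{h^{p-1}}\tau x_0^{h^{p-2}}\cdots x_0^{h}\tau)^{\pi/(p-1)}$ and the auxiliary element $y_1\in\overline{\langle x_1,\sigma,\tau\rangle}$ come from --- and (b) showing that no further relations are needed. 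Step (b) is the main obstacle: one must show the relation module over $G_K^{\mathrm{tr}}$ is cyclic, and the key input is that $G_K$ has strict cohomological dimension $2$ with the non-degenerate Tate pairing taking values in $H^2(G_K,\mathbb{F}_p)\cong \mathrm{Br}(K)[p]\cong\mathbb{F}_p$, which forces the relation rank to be $1$ and simultaneously pins down the alternating-commutator shape of the relation, exactly as in the Demushkin case but now over $G_K^{\mathrm{tr}}$ rather than over $\mathbb{Z}_p$. Assembling (a) and (b), together with a Frattini/Nakayama-type argument to pass from the mod-$p$ statement to the statement over $\mathbb{Z}_p$ and the standing assumption $p\neq 2$ (to avoid the $2$-adic pathologies, analogous to a real place), yields the presentation.
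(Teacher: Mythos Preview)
The paper does not prove this theorem; immediately after the statement it says ``For the full proof we refer to \cite{jannsen1982struktur} and \cite{diekert1984absolute}'', and then gives only a short indication of the strategy. So there is no detailed argument in the paper for you to match.

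That said, your outline and the paper's sketch are organised differently, and the difference is worth noting. You work directly with the single extension $1\to W\to G_K\to G_K^{\mathrm{tr}}\to 1$, invoking freeness of wild inertia and Tate duality for $G_K$ itself to pin down the relation module. The paper's summary, following Koch, instead emphasises a \emph{relative} viewpoint: one axiomatises the fact that for \emph{every} finite tamely ramified extension $L/K$ the group $G_L(p)$ is Demushkin, and then records three module-theoretic conditions on the family $\{G_L(p)\}_L$ as $H$ ranges over open normal subgroups of $\mathrm{Gal}(K^{\mathrm{tr}}/K)$ contained in $\ker\chi_{\mathrm{tr}}$ --- namely (I) the Demushkin property, (II) that the symplectic $G$-module $H^1(H,\mathbb{F}_p)^{\perp}/H^1(H,\mathbb{F}_p)$ is free over $\mathbb{F}_p[G]$ and splits as a sum of two totally isotropic $G$-submodules, and (III) that $(G_L(p)^{\mathrm{ab}})_{\mathrm{tor}}\cong\mu_{p^s}$ as a $G$-module. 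It is this equivariant information over the whole tamely ramified tower, not just the cohomology of $G_K$ alone, that Jannsen--Wingberg actually use to produce the explicit relation (C) with its $\langle x_0,\tau\rangle$-term and the twisting by $g,h$. Your step (b), where you say ``the relation module over $G_K^{\mathrm{tr}}$ is cyclic'' and appeal to Tate duality for $G_K$, is the place where this extra equivariant input is hiding: knowing $\dim H^2(G_K,\mathbb{F}_p)=1$ controls the number of relations but not their precise $G_K^{\mathrm{tr}}$-equivariant shape, which is exactly what conditions (II) and (III) supply.
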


\begin{rem}
In a sense this would mean that this should correspond to a surface
of genus $\frac{N}{2}+1$.
\end{rem}

For the full proof we refer to \cite{jannsen1982struktur} and \cite{diekert1984absolute},
it is based on a theory of H. Koch \cite{koch1978galois} which axiomizes
the fact that for every finite, tamely ramified extension $L/K$ the
group $G_{L}(p)$ is a Demuskin group. 

Group theoretically, it uses the following 3 axioms/conditions:

Let $H$ is an open normal subgroup of $\mathcal{G}=Gal(K^{tr}/K)$
contained in the kernel of $\chi_{tr}$ (the cyclotomic character
from $\mathcal{G}$), so that $\mu_{p^{s}}$ is contained in the fixed
field $L$ of $H$, and denote by $G=\mathcal{G}/H=Gal(L/K)$.

We have that $G_{L}(p)=Gal(L(p)/L)$, is the Galois group of the maximal
p-extension of $L$, it is the maximal pro-p-factor group of $G_{L}$
and is a Demuskin group. 

For these groups we have the following known properties. 

I. - $dimH^{1}(G_{L},\mathbb{F}_{p})<\infty$, $dimH^{2}(G_{L},\mathbb{F}_{p})=1$
and $H^{1}(G_{L},\mathbb{F}_{p})\times H^{1}(G_{L},\mathbb{F}_{p})\overset{\cup}{\rightarrow}H^{2}(G_{L},\mathbb{F}_{p})$
is a non degenerate anti-symmetric bilinear form. 

II. - Looking at $H^{1}(H,\mathbb{F}_{p})$ as a 1-dimensional subspace
of the symplectic space $H^{1}(G_{L},\mathbb{F}_{p})$ we have an
isomorphism of $G$-modules $H^{1}(H,\mathbb{F}_{p})^{\perp}/H^{1}(H,\mathbb{F}_{p})\cong\mathbb{F}_{p}[G]^{n}$. 

With respect to the induced non-degenerate bilinear form this $G$-module
is a direct sum of two totally isotropic $G$-submodules. 

III. - $(G_{L}(p)^{ab})_{tor}\cong\mu_{p^{s}}$ as a $G$-module.
\begin{rem}
\label{rem:Condition explain}Condition $I$ expresses the fact that
$G_{L}(p)$ is a Demuskin group.

By class field theory $H^{1}(G_{L},\mathbb{F}_{p})$ is dual to $L^{*}/L^{*^{p}}=(\pi)/(\pi)^{p}\times U^{1}/(U^{1})^{p}$
where $\pi$ is a prime element of $K$ and $U^{1}$ is the group
of principal units of $L$. 

The cup product goes to the Hilbert symbol on $L^{*}/L^{*^{p}}$ and
$U^{1}/(U^{1})^{p}$ contains$H^{1}(H,\mathbb{F}_{p})^{\perp}/H^{1}(H,\mathbb{F}_{p})$
as a subspace of dimension 1 which is isomorphic to $\mathbb{F}_{p}[G]$.

The last isomorphism is a result of Iwasawa. The assertion on the
the direct sum of totally isotropic submodules property is due to
Koch.
\end{rem}

In \cite{neukirch2013cohomology} (chapter 7 section 5) an example
of a non trivial outer automorphisms of $G_{K}$ is constructed, thus
showing that the extra filtration preserving condition in theorem
\ref{thm: Moch anab} is necessary.

It is easier to describe and show it is outer for $N=[K:\mathbb{Q}_{p}]>1$.

Let $\sigma,\tau,x_{0},...,x_{N}$ be the generators of $G_{K}$ described
in theorem \ref{thm:Jannsen-Wingberg}.
\begin{defn}
\label{def: neuk outer}Define $\psi:G_{K}\rightarrow G_{K}$ by $\psi(y)=y$
for $y=\sigma,\tau,x_{0},...,x_{N-1}$ and $\psi(x_{N})=x_{N}x_{N-1}$.
\end{defn}

\begin{rem}
In example \ref{exa:neuk outer is dehn} we see that for the right
choice of splitting, the above construction is also a Dehn twist in
the sense of definition \ref{def:Dehn autom}.
\end{rem}

This is then an automorphism of $G_{K}$,since $[x_{N-1};x_{N}]=[x_{N-1};x_{N}x_{N-1}]$
and so the generators $\sigma,\tau,x_{0},...,x_{N-1},x_{N}x_{N-1}$
satisfy both relations if $N>1$. 

Now suppose that is an inner automorphism, i.e. there is an element
$\rho\in G_{K}$ such that $\psi(z)=z^{\rho}$ for all $z\in G_{K}$. 

Denote by $V_{K}^{i}$ the $i$-th term of the $p$-central series
of the ramification group $V_{K}$ with $V_{K}^{1}=V_{K}$.

Since $x_{N-1}^{\rho}=\psi(x_{N-1})=x_{N-1}$ and since $x_{n-1}V_{K}^{2}$
generates a free $\mathbb{F}_{p}[[G_{K}]]$ module in $V_{K}/V_{K}^{2}$
, by \cite{jannsen1982struktur}, we obtain that $\rho\in V_{K}$.

It follows that $x_{N}V_{K}^{2}=x_{N}x_{N-1}V_{K}^{2}$ which is a
contradiction. Thus, if $N=[K:\mathbb{Q}_{p}]>1$, we have constructed
a nontrivial outer automorphism of $G_{K}$. 

The case $K=\mathbb{Q}_{p}$ is more difficult. It is constructed
in a similar manner, but also uses the following result from \cite{wingberg1982eindeutigkeitssatz}.

\begin{thm}
Let $\psi_{3}:G_{K}/V_{K}^{3}\rightarrow G_{K}/V_{K}^{3}$ be an automorphism
of $G_{K}/V_{K}^{3}$ which induces the identity on the factor group
$G_{K}/V_{K}$.

Then there exists an automorphism of $G_{K}$ which coincides with
$\psi_{3}$ modulo $V_{K}^{2}$. 
\end{thm}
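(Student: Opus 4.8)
The plan is to construct the lift by \emph{successive approximation} along the lower $p$-central series of the wild inertia subgroup $V_K$, using the explicit Jannsen--Wingberg presentation of $G_K$ (Theorem \ref{thm:Jannsen-Wingberg}) together with Koch's axioms I--III. Recall that $V_K$ is the pro-$p$ group generated as a normal subgroup by $x_0,\dots,x_N$, that $\langle\sigma,\tau\rangle$ is the tame quotient $G_K/V_K$, and that each graded piece $\mathrm{gr}_n:=V_K^n/V_K^{n+1}$ is an $\mathbb{F}_p$-vector space carrying an action of $\mathbb{F}_p[[G_K]]$ through the augmentation. Since $\psi_3$ induces the identity on $G_K/V_K$, it preserves $V_K/V_K^3$, and I would fix set-theoretic lifts in $G_K$ of $\psi_3(\sigma),\psi_3(\tau),\psi_3(x_i)$. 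These define a candidate map on generators that agrees with $\psi_3$ modulo $V_K^2$ and for which the defining relations B) and C) hold modulo $V_K^3$, because $\psi_3$ is an honest automorphism of $G_K/V_K^3$.

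Next I would set up the inductive step. Write $r$ for the relator coming from relation C), and keep relation B) alongside it. Suppose that after finitely many modifications we have images of the generators, still $\equiv\psi_3\pmod{V_K^2}$, for which both relations hold modulo $V_K^n$ with $n\ge 3$; the resulting discrepancy is a well-defined class $\epsilon_n\in\mathrm{gr}_n$. I would then modify each generator $g\mapsto g\,\xi_g$ by an element $\xi_g\in V_K^{n-1}$. Because $n-1\ge2$, such a modification does \emph{not} change any generator modulo $V_K^2$, so agreement with $\psi_3$ modulo $V_K^2$ is preserved --- this is exactly where the slack between the level $V_K^2$ at which we must agree and the level $V_K^3$ at which $\psi_3$ is defined gets used. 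A Fox-calculus computation shows that, to first order, the effect on $\epsilon_n$ is a linear map $A_n\colon\bigoplus_g\mathrm{gr}_{n-1}\to\mathrm{gr}_n$ whose coefficients are the images of the Fox derivatives $\partial r/\partial x_i$ and of the analogous derivatives of the tame relation; since these derivatives lie in the augmentation ideal, $A_n$ genuinely raises the grading by one. Solving the step amounts to showing $\epsilon_n\in\mathrm{im}(A_n)$.

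The heart of the argument, and the step I expect to be the main obstacle, is the vanishing of the obstruction $[\epsilon_n]\in\mathrm{coker}(A_n)$. This cokernel is the degree-$n$ piece of an $H^2$-type group, which is exactly what Koch's axioms control: axiom I gives $\dim_{\mathbb{F}_p}H^2(G_L,\mathbb{F}_p)=1$ together with a non-degenerate cup product (the Demuskin/Poincar\'e-duality condition), while axiom II identifies the relevant relation module $H^1(H)^{\perp}/H^1(H)$ with the \emph{free} module $\mathbb{F}_p[G]^n$. Freeness is what forces $A_n$ to be surjective in all the relevant degrees, so that the only possible obstruction lies in the one-dimensional space coming from axiom I. Since $\psi_3$ is a genuine automorphism of $G_K/V_K^3$, the scalar by which any lift acts on that one-dimensional $H^2$ is forced to be $\equiv1$, and the cocycle condition satisfied by $\epsilon_n$ as the discrepancy of an attempted homomorphism then pins its class to zero. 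Verifying this pairing against the explicit orientation relation C) --- in particular the term $\langle x_0,\tau\rangle^{g}$ and the $p^{s}$-power reflecting $\mu_{p^s}$ (axiom III) --- is the genuinely technical part and is where I would expect to spend most of the effort.

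Finally I would pass to the limit and check that the result is an automorphism. The successive modifications lie in $V_K^{n-1}$ with $n\to\infty$, so the generator images form Cauchy sequences; since $G_K$ is complete and $\bigcap_n V_K^n=1$, they converge to a continuous homomorphism $\psi\colon G_K\to G_K$ satisfying B) and C) exactly, with $\psi\equiv\psi_3\pmod{V_K^2}$ by construction. By construction $\psi$ preserves $V_K$, induces the identity on $G_K/V_K$, and induces on $V_K/V_K^2=\mathrm{gr}_1$ the same automorphism as $\psi_3$. Hence $\psi|_{V_K}$ is surjective modulo the Frattini subgroup of the pro-$p$ group $V_K$ and is therefore an automorphism of $V_K$, since topologically finitely generated pro-$p$ groups are Hopfian; combined with the identity on $G_K/V_K$, a five-lemma argument on $1\to V_K\to G_K\to G_K/V_K\to1$ shows that $\psi$ is an automorphism of $G_K$ coinciding with $\psi_3$ modulo $V_K^2$, as required.
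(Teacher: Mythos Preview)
The paper does not actually prove this theorem: it is quoted without proof from \cite{wingberg1982eindeutigkeitssatz} (Wingberg's \emph{Eindeutigkeitssatz f\"ur Demu\v{s}kinformationen}), and is used only as a black box to handle the awkward case $K=\mathbb{Q}_p$ when constructing the explicit outer automorphism of $G_K$. There is therefore no in-paper proof to compare your proposal against.

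That said, your outline is broadly in the spirit of Wingberg's original argument: lifting through the $p$-central filtration of $V_K$, with obstructions living in a relation module controlled by Koch's axioms I--III. Two points deserve caution. First, your obstruction-vanishing step is the whole content of the theorem and is only gestured at; the assertion that freeness of $\mathbb{F}_p[G]^n$ forces surjectivity of your linear map $A_n$, leaving only a one-dimensional obstruction killed by the cocycle condition, is plausible but needs a genuine computation rather than the heuristic you give. Second, be careful with the Hopfian step at the end: $V_K$ is \emph{not} topologically finitely generated in general (it is the wild inertia, an infinitely generated pro-$p$ group), so the Frattini argument you invoke does not apply directly. You would instead want to argue that $\psi$ is an isomorphism on each finite quotient $G_K/V_K^n$ and pass to the inverse limit, or run the same construction for $\psi_3^{-1}$ to produce an inverse.
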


\subsection{\label{subsec:Mapping-class-groups}Mapping class groups of surface}

We would like to have some arithmetic theory similar to the theory
of mapping class groups, and more specifically the mapping class group
of a surface (for more information on the subject we refer to \cite{fathi2012thurston}.

Let $S$ be a connected, closed, orientable surface and ${\displaystyle \mathrm{Homeo}^{+}(S)}$
the group of orientation-preserving, or positive, homeomorphisms of
$S$ ((that restrict to the identity on $\partial S$ if $S$ has
a boundary).

This group has a natural topology, the compact-open topology. It can
be defined easily by a distance function: if we are given a metric
$d$ on $S$ inducing its topology then the function defined by
\[
{\displaystyle \delta(f,g)=\sup_{x\in S}\left(d(f(x),g(x)\right)}
\]
is a distance inducing the compact-open topology on ${\displaystyle \mathrm{Homeo}^{+}(S)}$. 

The connected component of the identity for this topology is denoted
${\displaystyle \mathrm{Homeo}_{0}(S)}$. By definition it is equal
to the homeomorphisms of $S$ which are isotopic to the identity and
it is a normal subgroup of the group of positive homeomorphisms.
\begin{defn}
\label{def:MCG}We define the mapping class group of $S$ as the group
\[
{\displaystyle \mathrm{MCG}(S)=\mathrm{Homeo}^{+}(S)/\mathrm{Homeo}_{0}(S)}
\]

Modifying the definition to include all homeomorphisms we obtain the
extended mapping class group ${\displaystyle \mathrm{MCG}^{\pm}(S)}$.
\end{defn}

Note that these groups are countable, and the extended mapping class
group ${\displaystyle \mathrm{MCG}^{\pm}(S)}$, contains the mapping
class group as a subgroup of index 2.
\begin{rem}
This definition can also be made in the differentiable category: if
we replace all instances of \textquotedbl homeomorphism\textquotedbl{}
above with \textquotedbl diffeomorphism\textquotedbl{} we obtain
the same group, that is the inclusion ${\displaystyle \mathrm{Diff}^{+}(S)\subset\mathrm{Homeo}^{+}(S)}$
induces an isomorphism between the quotients by their respective identity
components.
\end{rem}

If $S$ is closed and $f$ is a homeomorphism of $S$ then we can
define an automorphism ${\displaystyle f_{*}}$ of the fundamental
group ${\displaystyle \pi_{1}(S,x_{0})}$ as follows: fix a path $\gamma$
between $x_{0}$ and $f(x_{0})$ and for a loop $\alpha$ based at
${\displaystyle x_{0}}$ representing an element ${\displaystyle [\alpha]\in\pi_{1}(S,x_{0})}$
define ${\displaystyle f_{*}([\alpha])}$ to be the element of the
fundamental group associated to the loop ${\displaystyle {\bar{\gamma}}*f(\alpha)*\gamma}$.

This automorphism depends on the choice of $\gamma$ , but only up
to conjugation. Thus we get a well-defined map $\Phi$ from ${\displaystyle \mathrm{Homeo}(S)}$
to the outer automorphism group ${\displaystyle \mathrm{Out}(\pi_{1}(S,x_{0}))}$.
\begin{fact}
\label{fact:Dehn-Nilsen}The map $\Phi$ is a morphism and its kernel
is exactly the subgroup ${\displaystyle \mathrm{Homeo}_{0}(S)}$. 

The Dehn--Nielsen--Baer theorem \cite{dehn1987papers,nielsen1927untersuchungen}
states that it is in addition surjective.

In particular, it implies that the extended mapping class group ${\displaystyle \mathrm{MCG}^{\pm}(S)}$
is isomorphic to the outer automorphism group ${\displaystyle \mathrm{Out}(\pi_{1}(S))}$.

The image of the mapping class group is an index 2 subgroup of the
outer automorphism group, which can be characterized by its action
on homology.
\end{fact}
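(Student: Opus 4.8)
The plan is to dispatch the four assertions in order, with essentially all the work concentrated in surjectivity, which is the substance of Dehn--Nielsen--Baer. \emph{First, that $\Phi$ is a homomorphism:} given $f,g\in\mathrm{Homeo}(S)$ and chosen paths $\gamma_f$ from $x_0$ to $f(x_0)$ and $\gamma_g$ from $x_0$ to $g(x_0)$, one computes $(f\circ g)_*$ with respect to the concatenated path $\gamma_f * f(\gamma_g)$ and checks directly that it equals $f_*\circ g_*$; since any two choices of basepoint path change the induced automorphism only by an inner one, the resulting map to $\mathrm{Out}(\pi_1(S,x_0))$ is well defined and multiplicative.

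\emph{Next, the kernel is $\mathrm{Homeo}_0(S)$.} If $f$ is isotopic to the identity, an isotopy drags $x_0$ along a path with respect to which $f_*$ is literally the identity, so $f\in\ker\Phi$. Conversely, $S$ (closed orientable of genus $\geq 1$) is aspherical, a $K(\pi_1(S),1)$, so a self-homeomorphism inducing an inner automorphism on $\pi_1$ is homotopic to $\mathrm{id}_S$; by the theorem of Baer that homotopic homeomorphisms of a surface are isotopic, such an $f$ lies in $\mathrm{Homeo}_0(S)$. Combined with the previous paragraph this already gives an injection $\mathrm{MCG}^\pm(S)\hookrightarrow\mathrm{Out}(\pi_1(S))$.

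\emph{The main step is surjectivity of $\Phi$}, and I would give the now-standard boundary argument. The torus case is elementary: $\mathrm{Out}(\mathbb{Z}^2)=\mathrm{GL}_2(\mathbb{Z})$ and every matrix is realized by a linear homeomorphism of $\mathbb{R}^2/\mathbb{Z}^2$. For genus $g\geq 2$, fix a hyperbolic metric, so $\Gamma:=\pi_1(S)$ acts properly discontinuously, cocompactly and isometrically on $\mathbb{H}^2$ with quotient $S$. Given $\phi\in\mathrm{Out}(\Gamma)$, pick a lift $\tilde\phi\in\mathrm{Aut}(\Gamma)$; since $\tilde\phi$ carries a finite generating set to one, it is a quasi-isometry of $\Gamma$ in its word metric, and transporting along the orbit map $\Gamma\to\mathbb{H}^2$ (a quasi-isometry by the Milnor--\v{S}varc lemma) yields a quasi-isometry $F$ of $\mathbb{H}^2$ that is $\tilde\phi$-equivariant, $F(\delta\cdot x)=\tilde\phi(\delta)\cdot F(x)$. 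Stability of quasi-geodesics in the hyperbolic plane lets $F$ extend to a self-homeomorphism of the circle at infinity $\partial\mathbb{H}^2\cong S^1$, still $\tilde\phi$-equivariant. One then upgrades this equivariant circle homeomorphism to a $\tilde\phi$-equivariant homeomorphism $\hat F$ of the closed disc (e.g.\ by coning off along geodesics and smoothing, or by a barycentric/averaging construction); being equivariant, $\hat F$ descends to a homeomorphism $\bar F$ of $S$ with $(\bar F)_*=\phi$ in $\mathrm{Out}(\Gamma)$. The delicate point, and the main obstacle, is precisely this last upgrade --- passing from a boundary map to a genuinely equivariant homeomorphism of the whole disc --- together with the bookkeeping keeping every intermediate map equivariant for the \emph{twisted} action.

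\emph{Finally, the index-$2$ statement.} An orientation-preserving homeomorphism fixes the fundamental class in $H_2(S;\mathbb{Z})\cong\mathbb{Z}$, equivalently acts by $+1$ rather than $-1$ on $\wedge^{2g}H_1(S;\mathbb{Z})$, so the image of $\mathrm{MCG}(S)$ lies in this index-$\leq 2$ subgroup of $\mathrm{Out}(\pi_1(S))$. Every closed orientable surface carries an orientation-reversing homeomorphism (a reflection), whose class occupies the nontrivial coset; hence the image of $\mathrm{MCG}(S)$ has index exactly $2$, and together with surjectivity of $\Phi$ this identifies $\mathrm{MCG}^\pm(S)\cong\mathrm{Out}(\pi_1(S))$ with $\mathrm{MCG}(S)$ as the index-$2$ subgroup detected by the action on $H_2$. (For $g\geq 1$ one may remark that this homological criterion is sharp, since the natural map $\mathrm{MCG}(S)\to\mathrm{Sp}(2g,\mathbb{Z})$ is surjective.)
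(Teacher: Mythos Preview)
The paper does not prove this statement at all: it is recorded as a \emph{fact} with citations to Dehn and Nielsen, and the surrounding text is purely expository background on mapping class groups. There is therefore no ``paper's own proof'' to compare against.

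Your sketch is a correct outline of the standard modern argument (the quasi-isometry/boundary-circle approach to surjectivity, together with asphericity and Baer's homotopy-implies-isotopy theorem for the kernel). One small remark: the step you flag as delicate --- extending the equivariant circle map to an equivariant disc homeomorphism --- is not usually done by ``coning off along geodesics'', which does not obviously produce a homeomorphism; the clean way is the Douady--Earle barycentric extension, which is conformally natural and hence automatically equivariant. You should also note explicitly that the statement excludes the sphere (where $\pi_1$ is trivial but $\mathrm{MCG}^\pm$ is not), as you implicitly do when restricting to genus $\geq 1$.
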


A closed curve in a surface S is a continuous map $S^{1}\rightarrow S$,
it is simple if the map in injective. A closed curve is called essential
if it is not homotopic to a point, a puncture, or a boundary component.
We will usually identify a closed curve with its image in $S$.

Consider the annulus $A=S^{1}\times[0,1],$ we embed it in the plane
with polar coordinates via $(\theta,t)\mapsto(\theta,t+1)$, and take
the orientation induced by the standard orientation of the plane.
Define $T:A\rightarrow A$ be the \textquotedblleft left twist map\textquotedblright{}
of $A$ given by the formula $T(\theta,t)=(\theta+2\pi t,t)$ (note
that taking instead $\theta-2\pi t$ would give a right twist).

The map $T$ is an orientation-preserving homeomorphism that fixes
the boundary of $A$ point-wise. 

\begin{center} \begin{tikzpicture}
\draw(0,0) circle (1);
\draw(0,0) circle (2); 
\draw (1,0) -- (2,0);
\draw [->](2.5,0) --(3,0);
\draw(5.5,0) circle (1);
\draw(5.5,0) circle (2); 
\draw [domain=1:2, samples=50] plot ({\x*cos(2*pi*\x r)+5.5}, {\x*sin(2*pi*\x r)});
\draw [->](2.5,0) --(3,0);
\end{tikzpicture}\end{center}
\begin{defn}
\label{def:Dehn twist}Let $\alpha$ be simple closed curve on $S$
and let $N$ be a regular neighborhood of $\alpha$ and take $\phi$
be an orientation preserving homeomorphism then there is a homeomorphism
$f$ from $A$ to the canonical annulus ${\displaystyle A_{0}}$,
sending $C$ to a circle with the counterclockwise orientation.

Define a homeomorphism $T_{\alpha}$ of $S$ as follows: on ${\displaystyle S\setminus N}$
it is the identity, and on $N$ it is equal to ${\displaystyle \phi^{-1}\circ T\circ\phi}$.

The class of ${\displaystyle T_{\alpha}}$ in the mapping class group
${\displaystyle \mathrm{MCG}(S)}$ does not depend on the choice of
$\phi$ made above, and the resulting element is called the Dehn twist
about $\alpha$.

\begin{center} 
\begin{tikzpicture}
\draw (-3,1) to[out=30,in=150] (-1,1) to[out=-30,in=210] (0,1) to[out=30,in=150] (2,1) to[out=-30,in=30] (2,-1) to[out=210,in=-30] (0,-1) to[out=150,in=30] (-1,-1) to[out=210,in=-30] (-3,-1) to[out=150,in=-150] (-3,1);
\draw[smooth] (-2.6,0.1) .. controls (-2.2,-0.25) and (1.2-3,-0.25) .. (1.6-3,0.1);
\filldraw[fill=white][smooth] (0.5-3,0.02) .. controls (0.8-3,-0.25) and (1.2-3,-0.25) .. (1.5-3,0.02);
\filldraw[fill=white][smooth] (0.5-3,0.015) .. controls (0.8-3,0.2) and (1.2-3,0.2) .. (1.5-3,0.015);
\draw[smooth] (3.4-3,0.1) .. controls (3.8-3,-0.25) and (4.2-3,-0.25) .. (4.6-3,0.1);
\filldraw[fill=white][smooth] (3.5-3,0.02) .. controls (3.8-3,-0.25) and (4.2-3,-0.25) .. (4.5-3,0.02); 
\filldraw[fill=white][smooth] (3.5-3,0.015) .. controls (3.8-3,0.2) and (4.2-3,0.2) .. (4.5-3,0.015); 
\draw (1-3,0) circle (1 and 0.8);
\draw [dashed](0.5-3,0.01) arc(180:0: -0.5 and 0.3);
\draw [dashed](0.5-3,-0.01) arc(180:360: -0.5 and 0.3);
\node at (1-3,1) {$  \beta $}; 
\node at (-0.8-3,0) {$  \alpha $}; 
\draw [->] (5.7-3,0) -- (6.2-3,0);

\draw (-3+7,1) to[out=30,in=150] (-1+7,1) to[out=-30,in=210] (0+7,1) to[out=30,in=150] (2+7,1) to[out=-30,in=30] (2+7,-1) to[out=210,in=-30] (0+7,-1) to[out=150,in=30] (-1+7,-1) to[out=210,in=-30] (-3+7,-1) to[out=150,in=-150] (-3+7,1);
\draw[smooth] (-2.6+7,0.1) .. controls (-2.2+7,-0.25) and (1.2+4,-0.25) .. (1.6+4,0.1);
\filldraw[fill=white][smooth] (0.5+4,0.02) .. controls (0.8+4,-0.25) and (1.2+4,-0.25) .. (1.5+4,0.02);
\filldraw[fill=white][smooth] (0.5+4,0.015) .. controls (0.8+4,0.2) and (1.2+4,0.2) .. (1.5+4,0.015);
\draw[smooth] (3.4+4,0.1) .. controls (3.8+4,-0.25) and (4.2+4,-0.25) .. (4.6+4,0.1);
\filldraw[fill=white][smooth] (3.5+4,0.02) .. controls (3.8+4,-0.25) and (4.2+4,-0.25) .. (4.5+4,0.02); 
\filldraw[fill=white][smooth] (3.5+4,0.015) .. controls (3.8+4,0.2) and (4.2+4,0.2) .. (4.5+4,0.015); 
\draw (6,0) arc(0:152: 1 and 0.8);
\draw (6,0) arc(0:-170: 1 and 0.8);
\draw (0.5+4,0.01) arc(180:114: -0.5 and 0.3);
\draw (0.5+3,0.01) arc(0:50: -0.5 and 0.3);
\draw (4.21,0.282) to[bend left] (4.12,0.36);
\draw (4.12,0.36) to[bend left] (4.12,0.38);
\draw (3.68,0.24) to[bend left] (3.78,0.24);
\draw (3.78,0.24) to[bend left](4.017,-0.15);
\draw [dashed](0.5+4,-0.01) arc(180:360: -0.5 and 0.3);
\node at (1+4,1.05) {$  T_\alpha (\beta) $}; 
\end{tikzpicture} 
\end{center}
\end{defn}

\begin{rem}
One would like to think of the map $\psi$ from Definition \ref{def: neuk outer},
as some sort of arithmetic Dehn twist.
\end{rem}

If $\alpha$ is not null-homotopic this mapping class is nontrivial,
and more generally the Dehn twists defined by two non-homotopic curves
are distinct elements in the mapping class group.

In the mapping class group of the torus identified with ${\displaystyle \mathrm{SL}_{2}(\mathbb{Z})}$
the Dehn twists correspond to unipotent matrices.

For example, the matrix ${\displaystyle \begin{pmatrix}1 & 1\\
0 & 1
\end{pmatrix}}{\displaystyle }$ corresponds to the Dehn twist about a horizontal curve in the torus.

The mapping class group is generated by the subset of Dehn twists
about all simple closed curves on the surface.

The Dehn--Lickorish theorem states that it is sufficient to select
a finite number of those to generate the mapping class group. This
generalises the fact that ${\displaystyle \mathrm{SL}_{2}(\mathbb{Z})}$
is generated by the matrices ${\displaystyle \begin{pmatrix}1 & 1\\
0 & 1
\end{pmatrix},\begin{pmatrix}1 & 0\\
1 & 1
\end{pmatrix}}{\displaystyle }$. 

In particular, the mapping class group of a surface is a finitely
generated group.

The least possible cardinality of Dehn twists generating the mapping
class group of a closed surface of genus ${\displaystyle g\geq2}$
is ${\displaystyle 2g+1}$; this was proven later by Humphries.
\begin{thm}
\label{thm:Thurston-classification}(Nielsen, Thurston) An element
${\displaystyle g\in\mathrm{MCG}(S)}$ is either:

- Of finite order 

- Reducible: there exists a set of disjoint closed curves on $S$
which is preserved by the action of $g$

- Pseudo-Anosov. 
\end{thm}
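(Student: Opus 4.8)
I would follow Thurston's original argument, exploiting the action of the mapping class group on a compactification of Teichm\"uller space. Recall that for a closed orientable surface $S$ of genus $g\geq 2$, Teichm\"uller space $\mathcal{T}(S)$ is an open $(6g-6)$-ball, and it admits the Thurston compactification $\overline{\mathcal{T}(S)}=\mathcal{T}(S)\cup\mathcal{PMF}(S)$ by the sphere of projective measured foliations, realised concretely via geodesic length functions and intersection numbers on the set of isotopy classes of essential simple closed curves. The two inputs I would take for granted are that $\overline{\mathcal{T}(S)}$ is homeomorphic to a closed ball $\mathbb{D}^{6g-6}$ and that $\mathrm{MCG}(S)$ acts on it by homeomorphisms extending its action on $\mathcal{T}(S)$. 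Given $g\in\mathrm{MCG}(S)$, Brouwer's fixed point theorem then yields a point $x\in\overline{\mathcal{T}(S)}$ fixed by $g$, and the proof reduces to a case analysis on the location of $x$.

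The case $x\in\mathcal{T}(S)$ is immediate: point stabilisers of the $\mathrm{MCG}(S)$-action on $\mathcal{T}(S)$ are finite (they are isometry groups of closed hyperbolic surfaces, bounded by Hurwitz), so $g$ has finite order. Suppose instead $x=[\mu]\in\mathcal{PMF}(S)$, so that $g\cdot\mu=\lambda\mu$ for a well-defined stretch factor $\lambda>0$. If $\mu$ is \emph{not} arational, the set $C=\{\,\gamma : i(\gamma,\mu)=0\,\}$ of essential simple closed curves with zero intersection against $\mu$ is nonempty and $g$-invariant (because $i(g\gamma,g\mu)=\lambda\, i(\gamma,\mu)$); from the canonical decomposition of $\mu$ into minimal and periodic components one extracts a finite $g$-invariant multicurve inside $C$, so $g$ is reducible.

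The remaining case, $\mu$ arational, is the substantive one: here $C=\varnothing$ so $g$ is certainly irreducible, and the target is to show $g$ is pseudo-Anosov. For this I would establish that $g$ acts on $\mathcal{PMF}(S)$ with source--sink dynamics: running the same fixed-point argument for $g^{-1}$ (also infinite order and irreducible) produces a second arational fixed class $[\nu]$, and one shows $[\mu]\neq[\nu]$ and that $\mu,\nu$ jointly fill $S$, so they are the vertical and horizontal foliations of a holomorphic quadratic differential. The mapping class $g$ then preserves the associated Teichm\"uller geodesic; being infinite order it must translate it nontrivially (a reflection would have order $2$ on the geodesic, forcing a fixed point of $\mathcal{T}(S)$ and finite order), and unwinding the translation length gives exactly $g\cdot\mu=\lambda\mu$, $g\cdot\nu=\lambda^{-1}\nu$ with $\lambda>1$ --- i.e.\ $g$ is pseudo-Anosov with dilatation $\lambda$.

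The main obstacle is concentrated in two classical but heavy ingredients, neither of which I would attempt to reprove: first, the construction of the Thurston compactification and the proof that it is a ball carrying a continuous $\mathrm{MCG}$-action, which requires the full theory of measured foliations/laminations together with properness and density statements for length and intersection functions; and second, in the arational case, the source--sink dynamics --- specifically that the two attracting/repelling foliations are distinct, arational, and jointly filling. An alternative, more combinatorial route (closer in spirit to the graph-of-groups viewpoint of this paper) is the Bestvina--Handel program: replace $S$ by a graph carrying $\pi_1(S)$, represent $g$ by a graph map, and run the train-track/folding algorithm to reach an ``efficient'' representative from which the trichotomy reads off; there the obstacle becomes proving that the algorithm terminates in an efficient state.
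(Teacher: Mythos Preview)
The paper does not prove this theorem: it is quoted as classical background in the survey Section~\ref{subsec:Mapping-class-groups}, with only the one-line gloss that ``a mapping class which is neither of finite order nor reducible must be pseudo-Anosov (which is a dynamical property).'' There is no argument to compare against.

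That said, your sketch is the standard Thurston proof via the Brouwer fixed point theorem on $\overline{\mathcal{T}(S)}$ and is essentially correct as an outline. Your case analysis (interior fixed point $\Rightarrow$ finite order; boundary fixed point with non-arational foliation $\Rightarrow$ reducible via the zero-intersection multicurve; arational case $\Rightarrow$ source--sink dynamics and pseudo-Anosov) is the right shape, and you are honest about the black boxes. One small point worth tightening in the reducible case: from $C=\{\gamma : i(\gamma,\mu)=0\}$ you need not just that $C$ is nonempty and $g$-invariant but that it yields a \emph{finite} multicurve of pairwise disjoint classes; this is where the decomposition of $\mu$ into minimal and periodic pieces really does work, but you should say explicitly that the boundary curves of the subsurfaces supporting the minimal components (together with the cores of the annular components) form the required finite system. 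Your alternative Bestvina--Handel suggestion is also valid, though arguably further from the spirit of the paper than you imply: the graph-of-groups machinery here is about splittings of $\pi_1$, not relative train tracks on spines.
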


The main content of the theorem is that a mapping class which is neither
of finite order nor reducible must be pseudo-Anosov (which is a dynamical
property).

\subsubsection{Generators of the mapping class group and the curve complex}

An important first step to understanding the mapping class group,
is the fact that it is generated by a finite amount of Dehn twists.

To prove that, one introduces the curve complex.
\begin{defn}
The curve complex is a simplicial complex associated to a surface
$S$.

Vertices are free homotopy classes of essential simple closed curves
on $S$, and $v_{1},...,v_{n}$ distinct vertices span a simplex if
and only if they can be homotoped to be pairwise disjoint.
\end{defn}

Masur and Minsky proved that he curve complex is hyperbolic, and they
proceeded to use that fact to derive new information about mapping
class groups. 

The curve complex is acted on nicely by the mapping class group, and
one can show that $MCG(S)$ is generated by $stab_{v}$ and $h$ where
$v,w$ are two vertices with an edge between them, and $h(v)=w$.

\subsection{Teichmuller spaces and Galois representations }

An important space associated to a surface is its Teichmuller space:
\begin{defn}
\label{def:Teich. space}Given a surface $S$ the Teichm�ller space
${\displaystyle T(S)}$ is the space of marked complex (equivalently,
conformal or complete hyperbolic) structures on $S$.
\end{defn}

These can be represented by pairs ${\displaystyle (X,f)}$ where $X$
is a Riemann surface and ${\displaystyle f:S\to X}$ a homeomorphism,
modulo a suitable equivalence relation. 

There is an action of the group ${\displaystyle \mathrm{Homeo}^{+}(S)}$
on such pairs, which descends to an action of ${\displaystyle \mathrm{MCG}(S)}$
on Teichm�ller space.

This action has many interesting properties; for example it is properly
discontinuous (though not free).

The action extends to the Thurston boundary of Teichm�ller space,
and the Nielsen-Thurston classification of mapping classes can be
seen in the dynamical properties of the action on Teichm�ller space
together with its Thurston boundary. Namely:

Finite-order elements fix a point inside Teichm�ller space (more concretely
this means that any mapping class of finite order in ${\displaystyle \mathrm{MCG}(S)}$
can be realized as an isometry for some hyperbolic metric on $S$);
Pseudo-Anosov classes fix the two points on the boundary corresponding
to their stable and unstable foliation and the action is minimal (has
a dense orbit) on the boundary; Reducible classes do not act minimally
on the boundary.

One can also think of $T(S)$ as a space of representations of $\pi_{1}(S)$
in the following sense:
\begin{thm}
The space $T(S)$ can be identified with a connected component in
the representation space $Rep(\pi_{1}(S),PSL(2,\mathbb{R}))=Hom(\pi_{1}(S),PSL(2,\mathbb{R}))/PSL(2,\mathbb{R})$,
where $PSL(2,\mathbb{R})$ acts by conjugation.
\end{thm}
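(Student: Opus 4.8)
The plan is to realise the identification through the holonomy of hyperbolic structures and then invoke deformation theory together with Goldman's description of the connected components of the character variety. First I would recall that a marked complete hyperbolic structure $(X,f)$ on $S$ produces a developing map $\mathrm{dev}\colon\widetilde{S}\to\mathbb{H}^{2}$ equivariant for a holonomy homomorphism $\rho\colon\pi_{1}(S)\to\mathrm{Isom}^{+}(\mathbb{H}^{2})=PSL(2,\mathbb{R})$; since the structure is complete and $S$ is closed, $\mathrm{dev}$ is an isometry and $\rho$ is discrete, faithful, and cocompact. The developing map is unique up to post-composition by an element of $PSL(2,\mathbb{R})$, which conjugates $\rho$, so one obtains a well-defined map $\mathrm{hol}\colon T(S)\to\mathrm{Hom}(\pi_{1}(S),PSL(2,\mathbb{R}))/PSL(2,\mathbb{R})$. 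Conversely, if two marked structures have conjugate holonomy one can arrange the developing maps to coincide, and transporting the resulting isometry through the markings exhibits the structures as equivalent; hence $\mathrm{hol}$ is injective, with image contained in the conjugacy classes of discrete faithful cocompact representations.

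Next I would show this image is open and closed in the character variety. Openness is the Ehresmann--Thurston deformation theorem: passing to holonomy is a local homeomorphism from marked geometric structures to representations, and a small deformation of a Fuchsian representation stays discrete, faithful and cocompact (equivalently, such representations form an open set, by Weil local rigidity). Closedness is the Chuckrow--J\o{}rgensen phenomenon: an algebraic limit of discrete faithful representations of a closed surface group is again discrete and faithful, and cocompactness persists because the Euler number, an integer-valued locally constant function, remains extremal in the limit. Therefore $\mathrm{hol}(T(S))$ is a union of connected components of the character variety; since $T(S)$ is connected (it is homeomorphic to $\mathbb{R}^{6g-6}$ when $S$ has genus $g\geq2$), its image is a single connected component.

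Finally I would pin down which component using Goldman's theorem: the connected components of $\mathrm{Hom}(\pi_{1}(S),PSL(2,\mathbb{R}))/PSL(2,\mathbb{R})$ are exactly the level sets of the Euler number $e\in\{-(2g-2),\dots,2g-2\}$, and the extremal values $e=\pm(2g-2)$ consist precisely of the discrete faithful representations, the sign recording an orientation; thus $T(S)$ is identified with the component of maximal Euler number. The main obstacle is the open-and-closed step: openness rests on the nontrivial Ehresmann--Thurston holonomy theorem plus a rigidity input, and closedness on the properness of the locus of discrete faithful representations inside the character variety. If only the weaker statement ``a connected component'' is wanted, one may skip Goldman's computation and conclude directly from openness, closedness, and the connectedness of $T(S)$.
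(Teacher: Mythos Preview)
Your outline is a correct sketch of the classical argument, but there is nothing to compare it against: the paper does not prove this theorem. It is stated in Section~2.4 purely as background motivation for the analogy between Teichm\"uller spaces and deformation spaces of Galois representations, and the paper moves on immediately to higher Teichm\"uller spaces without offering any proof or even a reference for this particular statement.

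For what it is worth, the route you describe---holonomy map, injectivity from completeness, openness via Ehresmann--Thurston, closedness via limits of discrete faithful representations, and finally Goldman's identification of the components by Euler number---is the standard one. Since the theorem as stated only asks for \emph{a} connected component, your final remark is apt: you could stop after establishing that the image is open, closed, and connected, without invoking Goldman.
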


This brought people to define higher Teichmuller spaces in the following
way:

Let $G$ be a simple Lie group $G$ of higher rank, (such as $PSL(n,\mathbb{R})$
for $n\geq3$).

A higher Teichm�ller space is a subset of $Hom(\pi_{1}(S),G)/G$ which
is a union of connected components that consist entirely of discrete
and faithful representations. 

For nice enough $G$, these higher spaces were also shown to have
proper actions of the mapping class group

We thus expect that in the arithmetic setting, the role of the Teichmuller
space will be some deformation space of Galois representations.

For deformations of the trivial representation mod $p$ this is the
same as representations of $G_{K}(p)$, since all representations
which are trivial mod $p$ factor through $G_{K}(p)$
\begin{rem}
The methods used in the proof of theorem \ref{thm: Moch anab} involve
p-adic Hodge theory, and special type of Hodge-Tate representations.

Looking at outer automorphism of $G_{K}(p)$ we also hope we could
say more about different p-adic Hodge theory structures sitting inside
the space of Galois representations.
\end{rem}

\section{\label{sec:Bass-serre}From curves to graphs of groups}

\subsection{Splittings of groups and graphs of groups}

Recall from that a simple closed curve on a surface $S$ is a continuous
injective map $S^{1}\rightarrow S$.
\begin{fact}
\label{fact:scc and split}A simple closed curve on a surface $S$,
gives rise to a splitting of $\pi_{1}(S)$ as an amalgamated free
product over $\mathbb{Z}$, or as an HNN extension over $\mathbb{Z}$.

In fact all splittings over $\mathbb{Z}$ of $\pi_{1}(S)$, into HNN
extensions and amalgamated free products, arise in such a way, and
so we have a bijection between isotopy classes of simple closed curves,
and such splittings of $\pi_{1}(S)$ over $\mathbb{Z}$.
\end{fact}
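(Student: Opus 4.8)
The plan is to pass through the standard correspondence between splittings of a group and actions on trees (Bass--Serre theory), using the universal cover of the surface as the concrete model. First I would set up the geometric side: given an essential simple closed curve $c$ on $S$, lift it to the universal cover $\widetilde{S}\cong\mathbb{H}^2$, obtaining a $\pi_1(S)$-invariant collection of disjoint properly embedded lines. Collapsing each complementary region to a vertex and each line to an edge produces a tree $T$ on which $\pi_1(S)$ acts without inversions, with quotient graph either a single edge with two vertices (if $c$ is separating) or a loop (if $c$ is non-separating). The edge stabilizers are the conjugates of $\langle c\rangle\cong\mathbb{Z}$, and the vertex stabilizers are the fundamental groups of the components of $S\setminus c$ (respectively of $S$ cut along $c$). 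Applying the fundamental theorem of Bass--Serre theory to this action then yields the decomposition of $\pi_1(S)$ as $A\ast_{\mathbb{Z}}B$ in the separating case, or as an HNN extension $A\ast_{\mathbb{Z}}$ in the non-separating case. One should check essentiality is used exactly to guarantee the edge group $\mathbb{Z}$ is nontrivial and the action is minimal (no free faces); a null-homotopic or boundary-parallel curve would give a trivial or peripheral splitting.

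For the converse --- that every $\mathbb{Z}$-splitting arises this way --- I would argue as follows. Given a splitting of $\pi_1(S)$ over $\mathbb{Z}$ into a one-edge graph of groups, let $T$ be the associated Bass--Serre tree. The idea is to realize the splitting topologically: the edge group $\mathbb{Z}$ is generated by some $\gamma\in\pi_1(S)$, and since $\pi_1(S)$ is a surface group every nontrivial element is either represented by a simple closed curve or is a proper power; here one uses that an edge group in a splitting of a surface group must be a maximal cyclic subgroup generated by a primitive element (otherwise the complementary pieces would fail to be finitely generated or the splitting would not be over $\mathbb{Z}$ in the reduced sense), and primitive elements of surface groups are represented by simple closed curves (Dehn). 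One then checks that cutting $S$ along this curve recovers the given vertex and edge groups, appealing to the fact that the splitting is determined up to isomorphism by the conjugacy class of the edge group together with how it sits in the two (or one) vertex groups --- and for surface groups the JSJ-type uniqueness makes this rigid. Finally, two simple closed curves give the same splitting iff they are isotopic: both directions follow from the fact that isotopic curves are freely homotopic hence conjugate, and conversely a surface group has a unique (up to isotopy) simple closed curve in a given conjugacy class of primitive elements.

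The main obstacle is the surjectivity and injectivity of the correspondence, i.e. the second paragraph, not the first. The subtle point is ruling out ``exotic'' $\mathbb{Z}$-splittings not coming from curves: a priori $\pi_1(S)$ could split over a cyclic subgroup in some unexpected way. The clean way to handle this is to invoke that for a closed surface group every splitting over $\mathbb{Z}$ is \emph{geometric} --- this is essentially the statement that the cyclic JSJ decomposition of a surface group is the one read off from a pants-type decomposition, together with the fact that $H^1$ and the structure of maximal cyclic subgroups force the edge element to be simple. I would cite this (it is classical, going back to work surrounding the theorem of Zieschang--Vogt--Coldewey and the JSJ theory of Rips--Sela / Bowditch) rather than reprove it. The isotopy-vs-conjugacy equivalence at the end is then a short additional argument using that a primitive conjugacy class determines a simple closed curve uniquely up to isotopy on a surface.
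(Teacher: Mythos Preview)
The paper does not actually prove this fact; it simply records it and cites \cite{zieschang2006surfaces}, Theorem~4.12.1. Your plan to ultimately cite Zieschang--Vogt--Coldewey for the hard direction therefore aligns exactly with what the paper does, and your first paragraph (curve $\Rightarrow$ splitting via the dual tree in $\widetilde S$ and Bass--Serre) is a correct and standard account of the easy direction.

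There is, however, a genuine error in the argument you sketch for the converse before falling back on the citation. You write that ``every nontrivial element is either represented by a simple closed curve or is a proper power'' and that ``primitive elements of surface groups are represented by simple closed curves (Dehn).'' This is false for closed surfaces of genus $\ge 2$: being indivisible (not a proper power) is necessary but far from sufficient for a conjugacy class to contain a simple representative. Indeed, the number of conjugacy classes of simple closed curves of length $\le L$ grows polynomially in $L$, whereas indivisible conjugacy classes grow exponentially, so almost all primitive elements are non-simple. The correct characterization --- that a cyclic subgroup arises as an edge group of a $\mathbb{Z}$-splitting of $\pi_1(S)$ if and only if its generator is represented by a simple closed curve --- is precisely the content of the theorem you are trying to prove, so invoking ``primitive $\Rightarrow$ simple'' as an intermediate step is circular. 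The honest route is exactly the one the paper takes: cite the geometric realization theorem (Zieschang--Vogt--Coldewey), or alternatively appeal to the fact that surface groups are their own cyclic JSJ decomposition (every $\mathbb{Z}$-splitting is obtained from a pants decomposition by collapses), which again is a theorem one quotes rather than derives from the false dichotomy above.
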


This can be found in \cite{zieschang2006surfaces} theorem 4.12.1.

The above leads us to a way to generalize ideas related to simple
closed curve to a any group.

First, let us recall some basic definitions:
\begin{defn}
(amalgamated free product) Let $A$, $B$ and $C$ be groups with
monomorphisms $\varphi_{1}:C\rightarrow A$, $\varphi_{2}:C\rightarrow B$.
We define the amalgamated free product of $A$ and $B$ over $C$,
denoted by $A*_{C}B$, to be the group generated by $A$ and $B$
together with the relation $\varphi_{1}(c)\varphi_{2}(c)^{-1}=1$
for all $c\in C$.
\end{defn}

\begin{defn}
(HNN extension) Let $A$ be a group and $C\subset A$ be a subgroup,
and let $\theta:C\rightarrow A$ be a monomorphism. We define the
HNN (G. Higman, B. H. Neumann, H. Neumann) extension of $A$ relative
to $\theta$, denoted by $A*_{\theta}$ to be the group generated
by $A$ and an element $t$ with the relation $\theta(c)=tct^{-1}$
for all $c\in C$. The new generator $t$ is called the stable letter.
\end{defn}

Using graphs of groups one can think of both of these ideas together:
\begin{defn}
A graph $X$, consists of a set of vertices $V(X)$, and a set of
(oriented) edges $E(X)$, an edge reversal map $e\mapsto\bar{e}$
such that $e\neq\bar{e}$ and $\bar{\bar{e}}=e$ for all edges (that
is each edge has an orientation, and we also have the reverse orientation
edge as part of the graph) and a map $v:E(X)\rightarrow V(X)$ sending
an edge to its terminal vertex (the initial vertex of an edge is thus
$v(\bar{e})$.
\end{defn}

\begin{defn}
A graph of groups, $(X,\boldsymbol{G})$, is a connected graph $X$
together with a function $\boldsymbol{G}$, which assigns a group
$\boldsymbol{G}_{v}$ for all vertices of the graph $V(X)$, and $\boldsymbol{G}_{e}$
for all edges $e\in E(X)$ such that $\boldsymbol{G}_{e}=\boldsymbol{G}_{\bar{e}}$,
and for each edge $e\in E(X)$ we also have a monomorphism $\varphi_{e}:\boldsymbol{G}_{e}\rightarrow\boldsymbol{G}_{v(e)}$
where $v(e)$ is the terminal vertex of $e$.

Given a graph of groups $(X,\boldsymbol{G})$, let $T\subset X$ be
a maximal subtree of $X$. We define the fundamental group $\pi_{1}(X,\boldsymbol{G},T)$
of $(X,\boldsymbol{G})$ at $T$, to be the group generated by the
vertex groups $G_{v}$ for all $v\in V(X)$ and the elements $t_{e}$
for all $e\in E(X)$ subject to the following relations:

1. $t_{\bar{e}}=t_{e}^{-1}$ for all $e\in E(X)$

2. $t_{e}\varphi_{e}(c)t_{e}^{-1}=\varphi_{\bar{e}}(c)$ for all $e\in E(X)$
and $c\in\boldsymbol{G}_{e}$ 

3. $t_{e}=1$ for all $e\in E(T)$.
\end{defn}

\begin{rem}
The above definition of fundamental group is independent of the choice
of maximal subtree $T$.
\end{rem}

We can now see that for a graph of groups with 1 edge and 1 vertex
\begin{center} \begin{tikzpicture}
\filldraw  
(0,0) circle (2pt) node[align=center,   below] {$A$} ;
\draw (0,0)arc(-90:270:0.5);
\draw (0,1.2)node {$C$} ;
\end{tikzpicture}\end{center}  the fundamental group gives us an HNN extension $A*_{C}$.

For a graph of groups with 1 edge and 2 vertices\begin{center} \begin{tikzpicture}
\filldraw  
(0,0) circle (2pt) node[align=center,   below] {$A$} 
-- node[align=center, below] {$C$} 
(4,0) circle (2pt) node[align=center, below] {$B$};
\end{tikzpicture}\end{center}  the fundamental group gives us an amalgamated free products $A*_{C}B$.
\begin{defn}
Let $(X,\boldsymbol{G}),$ $(X',\boldsymbol{G}')$ be a graph of groups.
An isomorphism of $(X,\boldsymbol{G})$ to $(X',\boldsymbol{G}')$
is a tuple of the form $(H_{X},(H_{v})_{v\in V(X)},(H_{e})_{e\in E(X)},(\delta(e))_{e\in E(X)})$,
where:

\textbullet{} $H_{X}:X\rightarrow X'$ is a graph isomorphism, 

\textbullet{} $H_{v}:\boldsymbol{G}_{v}\rightarrow\boldsymbol{G}'_{H_{X}(v)}$
is a group isomorphism,

\textbullet{} $H_{e}=H_{\bar{e}}:\boldsymbol{G}_{e}\rightarrow\boldsymbol{G}'_{H_{X}(e)}$
is a group isomorphism, 

\textbullet{} $\delta(e)$ is an element of $\boldsymbol{G}'_{v(H_{X}(e))}$,
with the additional compatibility requirement that

$H_{v(e)}(\varphi_{e}(c))=\delta(e)\varphi_{H_{X}(e)}(H_{e}(c))\delta(e)^{-1}$
for all $e\in E(X),c\in\boldsymbol{G}_{e}$.
\end{defn}

Given a graph of groups $(X,\boldsymbol{G})$, one can associate the
Bass-Serre covering tree $\tilde{X}$, which is a certain tree with
a $\pi_{1}(X,\boldsymbol{G})$ action on it, without edge inversion.
This is a kind of universal cover of $(X,\boldsymbol{G})$, and one
has that $\tilde{X}/\pi_{1}(X,\boldsymbol{G})$ is isomorphic to $(X,\boldsymbol{G})$.

Similarly, given a tree $T$ for which a group $G$ acts on without
edge inversion, one has a notion of the graph of group quotient $T/G$,
for which the Bass-Serre covering is $T$.

To make this more precise, the graph $X$ of groups quotient will
be the quotient graph $T/G$, and vertex and edge groups will be isomorphic
to the vertex and edge stabilizers of vertices and edges of $T$ under
the $G$ action.

In such a way we can move between the notion of trees with a group
action on them, and graphs of groups.

One can also see that two graphs of groups are isomorphic if their
Bass-Serre trees have a $G$-equivariant graph isomorphism.
\begin{example}
Let $(X,\boldsymbol{G})$ be a graph of groups with a single edge
$e$ and two vertices $v_{1},v_{2}$, with $\boldsymbol{G}_{e}=C$,
$\boldsymbol{G}_{v_{1}}=A$,$\boldsymbol{G}_{v_{2}}=B$ with monomorphisms
$\varphi_{1}:\boldsymbol{G}_{e}\rightarrow\boldsymbol{G}_{v_{1}}$,
$\varphi_{2}:\boldsymbol{G}_{\bar{e}}\rightarrow\boldsymbol{G}_{v_{2}}$.

 \begin{center} \begin{tikzpicture}
\filldraw  
(0,0) circle (2pt) node[align=center,   below] {$A$} 
-- node[align=center, below] {$C$} 
(4,0) circle (2pt) node[align=center, below] {$B$};
\end{tikzpicture}\end{center}

We have $\pi_{1}(X,\boldsymbol{G})\cong A*_{C}B=G$.

In this case we would have $\tilde{X}$would be the tree with $V(\tilde{X})=\{gA:g\in G\}\amalg\{gB:g\in G\}$,
two vertices $gA$ and $fB$ have an edge between them if there is
$a\in A$ such that $fB=gaB$. The degree of a vertex $gA$ will be
$[A:\varphi_{1}(C)]$ and of $gB$ it will be $[B:\varphi_{2}(C)]$.
\end{example}

\begin{defn}
\label{def:Group splitting}Given a group $G$, we define its set
of splittings $Split(G)$ to be the set of isomorphism classes of
1 edge graphs of groups (i.e. HNN extensions and amalgamated free
products), with fundamental group isomorphic to $G$.

The set of $\mathbb{Z}$ splittings, $Split(G,\mathbb{Z})$, are splittings
for which all edge stabilizers are isomorphic to $\mathbb{Z}$.
\end{defn}

\begin{defn}
\label{def:Curve complex}We define the curve complex of $G$, denoted
by $\mathcal{C}(G)$, to be the complex whose vertices are $\alpha\in$$Split(G,\mathbb{Z})$,
and n vertices $v_{1},...,v_{n}$ span an n-dimensional simplex if
and only if they are pairwise non-intersecting (i.e. compatible),
in other words, if there is a graph of groups with n edges, all having
edge group $\mathbb{Z}$, such that it is a refinement of all $v_{i}$.
\end{defn}

\subsection{Intersection of Splittings}

We would also like to understand when do two simple closed curves
intersect using the splitting language.
\begin{defn}
Let $(X,\boldsymbol{G})$ be a graph of groups and let $T$ be a subgraph,
we define a new graph $(X/T,\boldsymbol{G}(X/T))$, the graph obtained
by collapsing the tree , to have vertices $V(X)\backslash V(T)\cup v_{T}$
where $v_{T}$ is a new vertex representing $T$. And the edges to
be all edges between $V(X)\backslash V(T)$ as in $X$ together with
edges between $v_{T}$ and $V(X)\backslash V(T)$ if there was an
edge joining a vertex in $V(X)\backslash V(T)$ to a vertex in $T$.

The groups for the edges and vertices of $X\backslash T$ are as in
the original $(X,\boldsymbol{G})$ and the group for the new vertex
$v_{T}$ is the fundamental group of $(T,\boldsymbol{G}|_{T})$. 
\end{defn}

\begin{example}
\label{exa:triangle}We can us the above to compute the fundamental
group of the following graph of groups:

\begin{center} \begin{tikzpicture}
\filldraw 
(0,0) circle (2pt) node[align=center,   below] {$A$} 
-- node[align=center, below] {$\alpha$} 
(4,0) circle (2pt) node[align=center, below] {$B$} 
-- node[midway, above, sloped] {$\beta$} 
(2,2) circle (2pt) node[align=center,  above] {$C$}
-- node[midway, above, sloped] {$\gamma$} 
(0,0);
\end{tikzpicture}\end{center}

We start by collapsing the edge of $\alpha$ and so we get

\begin{center} \begin{tikzpicture}
\filldraw(2,0) circle (2pt) node[align=center, below] {$A*_\alpha B$}; 
\draw (2,0)to[bend right] (2,2);
\draw (2.5,1) node {$\beta$};
\filldraw (2,2) circle (2pt) node[align=center,  above] {$C$};
\draw (2,2) to[bend right](2,0);
\draw (1.5,1) node {$\gamma$};
\end{tikzpicture}\end{center}

next collapsing the edge of $\beta$ we get

\begin{center} \begin{tikzpicture}
\filldraw 
(2,0) circle (2pt) node[align=center,  below] {$(A*_\alpha B)*_\beta C$};
\draw (2,0) arc(-90:270:0.6);
\draw (2,1.4)node {$\gamma$};
\end{tikzpicture}\end{center}

and thus the fundamental group is: $((A*_{\alpha}B)*_{\beta}C)*_{\gamma}$.
\end{example}

\begin{defn}
\label{def:intersection}We call two splittings compatible, or non-intersecting
if there is a graph of groups such that collapsing one edge (collapsing
the subgraph given by the edge and its two vertices) gives us the
first splitting and collapsing another gives us the second. We say
$i(\alpha,\beta)=0$ if they are compatible, and we say $i(\alpha,\beta)\neq0$
if they are not compatible.
\end{defn}

For example, two free products $A*B=G=A'*B'$ are compatible if and
only if there is a splitting of $G=C*D*F$ such that $(C*D)=A,F=B$
and $C=A',(D*F)=B'$. 

Non-compatible splittings are also called intersecting.

One can show (see \cite{scott2000splittings}) that two curves intersect
if and only if the corresponding splittings intersect.
\begin{example}
Suppose we have the situation as shown in the picture below:

\begin{center} 
\begin{tikzpicture} 
\filldraw[fill=gray](0,1) to[out=30,in=150] (2,1) to[out=-30,in=210] (3,1) to[out=30,in=150] (5,1) to[out=-30,in=210] (6,1) to[out=30,in=150] (8,1) to[out=-30,in=30] (8,-1) to[out=210,in=-30] (6,-1) to[out=150,in=30] (5,-1) to[out=210,in=-30] (3,-1) to[out=150,in=30] (2,-1) to[out=210,in=-30] (0,-1) to[out=150,in=-150] (0,1);
\draw[smooth] (0.4,0.1) .. controls (0.8,-0.25) and (1.2,-0.25) .. (1.6,0.1);
\filldraw[fill=white][smooth] (0.5,0.02) .. controls (0.8,-0.25) and (1.2,-0.25) .. (1.5,0.02);
\filldraw[fill=white][smooth] (0.5,0.015) .. controls (0.8,0.2) and (1.2,0.2) .. (1.5,0.015);
\draw[smooth] (3.4,0.1) .. controls (3.8,-0.25) and (4.2,-0.25) .. (4.6,0.1);
\filldraw[fill=white][smooth] (3.5,0.02) .. controls (3.8,-0.25) and (4.2,-0.25) .. (4.5,0.02); 
\filldraw[fill=white][smooth] (3.5,0.015) .. controls (3.8,0.2) and (4.2,0.2) .. (4.5,0.015); 
\draw[smooth] (6.4,0.1) .. controls (6.8,-0.25) and (7.2,-0.25) .. (7.6,0.1); 
\filldraw[fill=white][smooth] (6.5,0.02) .. controls (6.8,-0.25) and (7.2,-0.25) .. (7.5,0.02);
\filldraw[fill=white][smooth] (6.5,0.015) .. controls (6.8,0.2) and (7.2,0.2) .. (7.5,0.015);
\draw [color=blue](2.5,0.85) arc(270:90:0.3 and -0.85);
\draw[color=blue][dashed] (2.5,0.85) arc(270:450:0.3 and -0.85);
\draw [color=red](5.5,0.85) arc(270:90:0.3 and -0.85);
\draw[color=red][dashed] (5.5,0.85) arc(270:450:0.3 and -0.85);
\node at (2.5,1.2) {$ \color{blue} \alpha $}; 
\node at (5.5,1.2) {$ \color{red} \beta $}; 
\node at (1,1.5) {$  E_1 $}; 
\node at (4,1.5) {$ E_2 $}; 
\node at (7,1.5) {$ E_3 $}; 
\end{tikzpicture} \end{center}

Namely a group $G=\pi_{1}(S_{3})$ and two non intersecting curves
$\alpha$ (the blue curve) and $\beta$ (the red curve). Cutting along
them we get three subsurfaces $E_{1},E_{2},E_{3}$.

The splitting of $\alpha$ will then be $\pi_{1}(E_{1})*_{\pi_{1}(\alpha)}\pi_{1}(E_{2}\cup E_{3})$
and of $\beta$ this would be $\pi_{1}(E_{1}\cup E_{2})*_{\pi_{1}(\beta)}\pi_{1}(E_{3})$.

The two curves can easily be seen to be non intersecting, and both
of these can be realized at the same time as $G=\pi_{1}(E_{1})*_{\pi_{1}(\alpha)}\pi_{1}(E_{2})*_{\pi_{1}(\beta)}\pi_{1}(E_{3})$.

In a graph of groups form this would be:\begin{center} \begin{tikzpicture}
\filldraw  
(0,0) circle (2pt) node[align=center,   below] {$\pi_1(E_1)$} 
-- node[align=center, below] {$\pi_1(\alpha)$} 
(4,0) circle (2pt) node[align=center, below] {$\pi_1(E_2)$} 
-- node[align=center, below] {$\pi_1(\beta)$} 
(8,0) circle (2pt) node[align=center,  below] {$\pi_1(E_3)$} ; 
\end{tikzpicture}\end{center}
\end{example}

\begin{example}
Let us go back to example \ref{exa:triangle} and see what a geometric
interpretation of it could be, say on a genus 2 surface.

First note that any graph of groups coming from collapsing the one
in example \ref{exa:triangle} would of course not intersect the collapsings,
by definition.

From the form of the splitting, we get that the curves corresponding
to the edges will be non separating curves, such that cutting any
two of them would separate the surface into two components, and cutting
all three would give us 3 components.

From the above we see that a possible picture would be the following
picture:

\begin{center} 
\begin{tikzpicture}
\filldraw[fill=gray] (0,1) to[out=30,in=150] (2,1) to[out=-30,in=210] (3,1) to[out=30,in=150] (5,1) to[out=-30,in=30] (5,-1) to[out=210,in=-30] (3,-1) to[out=150,in=30] (2,-1) to[out=210,in=-30] (0,-1) to[out=150,in=-150] (0,1);
\draw[smooth] (0.4,0.1) .. controls (0.8,-0.25) and (1.2,-0.25) .. (1.6,0.1);
\filldraw[fill=white][smooth] (0.5,0.02) .. controls (0.8,-0.25) and (1.2,-0.25) .. (1.5,0.02);
\filldraw[fill=white][smooth] (0.5,0.015) .. controls (0.8,0.2) and (1.2,0.2) .. (1.5,0.015);
\draw[smooth] (3.4,0.1) .. controls (3.8,-0.25) and (4.2,-0.25) .. (4.6,0.1);
\filldraw[fill=white][smooth] (3.5,0.02) .. controls (3.8,-0.25) and (4.2,-0.25) .. (4.5,0.02); 
\filldraw[fill=white][smooth] (3.5,0.015) .. controls (3.8,0.2) and (4.2,0.2) .. (4.5,0.015); 
\draw [color=blue](1.0,0.15) arc(270:90:0.3 and 1.14/2);
\draw[color=blue][dashed] (1.0,0.15) arc(270:450:0.3 and 1.14/2);
\draw [color=red](1.0,-0.15) arc(270:90:0.3 and -1.14/2);
\draw[color=red][dashed] (1.0,-0.15) arc(270:450:0.3 and -1.14/2);
\draw [color=orange](0.5,0.01) arc(180:0: -0.5 and 0.3);
\draw [color=orange][dashed](0.5,-0.01) arc(180:360: -0.5 and 0.3);
\node at (1,1.5) {$ \color{blue} \alpha $}; 
\node at (0,0.5) {$ \color{orange} \beta $}; 
\node at (1,-1.5) {$ \color{red} \gamma $}; 
\node at (-0.3,1.2) {$  E_1 $}; 
\node at (-0.3,-1.2) {$  E_2 $};
\node at (4,1.5) {$ E_3 $}; 
\end{tikzpicture} 
\end{center}

and so we see that in fact, we have cut 3 times along the same homotopy
class of a simple closed curve, which of course does not intersect
itself. 

The graph of groups can be written as follows:

\begin{center} \begin{tikzpicture}
\filldraw 
(0,0) circle (2pt) node[align=center,   below] {$\pi_1(E_1)$} 
-- node[below] {$\pi_1(\beta)$} 
(4,0) circle (2pt) node[align=center, below] {$\pi_1(E_2)$} 
-- node[midway, above, sloped]{$\pi_1(\gamma)$} 
(2,2) circle (2pt) node[align=center,  above] {$\pi_1(E_3)$}
-- node[midway, above, sloped] {$\pi_1(\alpha)$} 
(0,0);
\end{tikzpicture}\end{center}
\end{example}

\begin{rem}
One can actually take this a step further and measure how non compatible
splittings are and define an intersection number for them.

Let $T_{1}$ and $T_{2}$ be two simplicial trees with an action of
a group $G$ on them, let $C\subset T_{1}\times T_{2}$ be the smallest
non-empty closed invariant (under the diagonal action) subset of $T_{1}\times T_{2}$
having convex fibers (a subset $E\subset T_{1}\times T_{2}$ has convex
fibers if for both $i\in\{1,2\}$ and every $x\in T_{i}$ , $E\cap p_{i}^{-1}(x)$
is convex where $p_{i}$ is the canonical projection).

This is also known as the core or the Guirardel core of the two graphs
(see \cite{guirardel2004core} for more information).

We define the intersection number $i(T_{1},T_{2})$ to be the covolume
of the core.

If the two splittings don't intersect if and only if their core is
1-dimensional.

A common refinement for them is then $\hat{C}$, the smallest connected
invariant subset of $T_{1}\times T_{2}$ having convex fibers.

Note that there is an equivalent way using almost invariant subsets
of the group $G$ associated to the splitting, the intersection number
of the splittings is then determined by intersections of certain sets
and cosets. (see \cite{scott2000splittings}).

Again it is known this is compatible with the notions of intersection
numbers of simple closed curves on a surface.

We hope to use such notations in later works to better understand
the relations between Dehn twists coming from intersecting splittings.
\end{rem}

Define the translation length function $l_{T}:G\rightarrow\mathbb{R}$,
of an action of $G$ on a tree $T$ to be:
\[
l_{T}(g)=inf_{p\in T}d_{T}(p,pg)
\]
An element $g$ is said to be elliptic with respect to the $G$-tree
$T$ if $l_{T}(g)=0$, in which case it fixes a point, and hyperbolic
if the translation length $l_{T}(g)$ is positive.

Let $\alpha=A*_{<c>}B$ (or $A*_{<c>})$ and $\beta=A'*_{<c'>}B'$
(or $A'*_{<c'>})$ be one edge $\mathbb{Z}$-splittings of a group
$G$. 

Define the translation length of $\alpha$ on $\beta$ to be:

We say that a $\beta$ is hyperbolic with respect to $\alpha$ if
the action of $c'$ on $T$, the Bass-Serre tree of $\alpha$, doesn't
fix a point. (note that by \cite{rips1995cyclic} this is true if
and only if $\beta$ is hyperbolic with respect to $\alpha$).

If this is the case, then the action of $c'$ instead preserves a
line in T, and it acts on it by translation of length $l_{T}(c')$.

So we see that $\beta$ is hyperbolic with respect to $\alpha$, if
$l_{T}(c')>0$, for $T$, the Bass-Serre tree of $\alpha$.

This will also be equivalent to having that $c'$ is not contained
in a conjugate of $A$ or $B$.

By remark 3.4 in \cite{scott2000splittings}, if $\alpha$ is hyperbolic
with respect to $\beta$ then the splittings intersect.

\subsection{Dehn twists for discrete groups}
\begin{defn}
\label{def:Dehn autom} 

Given a group and a $\mathbb{Z}$ splitting $A*_{<c>}B$, we define
the Dehn twist $\delta$ to be the automorphism $\delta(a)=a,\delta(b)=\varphi(c)b\varphi(c)^{-1}$
for all $a\in A,b\in B$.

Similarly given an HNN extension $A*_{\theta}$ where $\theta:<c>\rightarrow A$
we define the Dehn twist $\delta$ to be $\delta(a)=a$ for all $a\in A$
and $\delta(t)=ct$.
\end{defn}

\begin{example}
\label{exa:neuk outer is dehn}Let $n$ be an even number and let
$G$ be the group with the following presentation: $G=<x_{1},x_{2},...,x_{n}:x_{1}^{p}[x_{1},x_{2}]...[x_{n-3},x_{n-2}][x_{n-1},x_{n}]=1>$.
Let $A$ be the free group on $n-1$ generators $x_{1},...,x_{n-1}$,
and let $\theta$ be the map sending $x_{n-1}$ to $x_{1}^{p}[x_{1},x_{2}]...[x_{n-3},x_{n-2}]x_{n-1}$,
we thus get the splitting of $G$ as the HNN extension $A*_{\theta}=\{x_{1,...,}x_{n-1},t|tx_{n-1}t^{-1}=x_{1}^{p}[x_{1},x_{2}]...[x_{n-3},x_{n-2}]x_{n-1}\}$.
The Dehn twist associated to this splitting will act as follows: $x_{i}\mapsto x_{i}$
for all $1\leq i\leq n-1$, and $x_{n}\mapsto x_{n-1}x_{n}$.

One easily sees this is very similar to the outer automorphism constructed
in definition \ref{def: neuk outer}.

Note that for $A'=<x_{1},...,x_{n-2},x_{n}>$ and $\theta'$, we have
get another splitting of $G$ as an HNN extension $A'*_{\theta'}=\{x_{1,...,}x_{n-2},x_{n},t|tx_{n}t^{-1}=x_{1}^{p}[x_{1},x_{2}]...[x_{n-3},x_{n-2}]x_{n}\}$,
and one can see easily check they intersect.
\end{example}

\begin{defn}
\label{def:general Dehn autom}The above definition of a Dehn twist
is the same as the automorphism of a graph of groups $(X,\boldsymbol{G})$
given by $H_{X}=id_{X}$,$H_{v}=id_{\boldsymbol{G}_{v}}$, $H_{e}=id_{\boldsymbol{G}_{e}}$
and $\delta$ is such that there is an element $\gamma_{e}$ in the
center $Z(\boldsymbol{G}_{e})$ for each edge group such that $\delta(e)=\varphi_{e}(\gamma_{e})$.
A collection $(\gamma_{e})_{e\in E(X)}$ with each $\gamma_{e}\in Z(\boldsymbol{G}_{e})$
and a base point $v\in V(X)$ defines a Dehn twist.

In other words, take $\gamma_{e}\in Z(\boldsymbol{G}_{e})$ for each
$e\in E(X)$ and choose a base point $x_{0}$ of $X$. Then the Dehn
twist of $(\gamma_{e},x_{0})$ will act on $\pi_{1}(X,\boldsymbol{G},T)$
as follows:
\[
\begin{array}{c}
t_{e}\rightarrow\gamma_{e}t_{e}\\
g_{v}\rightarrow\gamma_{e_{1}}...\gamma_{e_{n}}g_{v}\gamma_{e_{n}}^{-1}...\gamma_{e_{1}}^{-1}
\end{array}
\]
 for all $t_{e}$ for which $e\notin T$, and for all $g_{v}\in\boldsymbol{G}_{v}$
where $e_{1}..e_{n}$ the path in $T$ from $x_{0}$ to $v$.
\end{defn}

\begin{rem}
\label{rem:compatible collapse}One gets from the definition above
that: 

-The action on the group of the base point is trivial.

-Changing base point from $x_{0}$ to $x_{1}$ will change the Dehn
twist by conjugation by $\gamma_{f_{1}}...\gamma_{f_{n}}$ for a path
$f_{1}...f_{n}$ in $T$ between $x_{0}$ and $x_{1}$.

-The definition is compatible with definition \ref{def:Dehn autom}.
For an amalgamated product, the choice of base point will be the one
with the group $A$.

-Let $T'$ be a subtree of $X$, we will get that Dehn twists for
$(X/T',\boldsymbol{G}(X/T')$ are the same as Dehn twists for $(X,\boldsymbol{G})$
taking $\gamma_{t}=1$ for all $t\in T'$.

Later when we look at graphs of groups with cyclic edge groups, the
collection $(\gamma_{e})$ will be the edge group generators.
\end{rem}

\begin{rem}
Once again this is all compatible and equal to the Dehn twists coming
from simple closed curves.
\end{rem}

Given all of this one can ask when do Dehn twists give us non trivial
mapping classes, or non trivial outer automorphisms of $G$.

We finish the subsection with two lemmas about intersections of splittings.
\begin{lem}
\label{lem:Conj intersection}Conjugation doesn't change whether two
splittings intersect or not
\end{lem}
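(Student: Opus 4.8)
The plan is to push everything down to the Bass--Serre trees and to use the fact that replacing a splitting by a conjugate does nothing at the level of $G$-trees, up to $G$-equivariant isomorphism. Recall that if $\alpha$ is the $\mathbb{Z}$-splitting $A*_{\langle c\rangle}B$ (resp.\ the HNN extension $A*_{\langle c\rangle}$), then for $g\in G$ its conjugate $\alpha^{g}$ is $A^{g}*_{\langle c^{g}\rangle}B^{g}$ (resp.\ $A^{g}*_{\langle c^{g}\rangle}$), where $H^{g}:=gHg^{-1}$. First I would exhibit the obvious map between Bass--Serre trees: in the amalgamated case $T_{\alpha^{g}}$ has vertex set $G/A^{g}\amalg G/B^{g}$, and $hA^{g}\mapsto hgA$, $hB^{g}\mapsto hgB$ is a well-defined, edge-preserving, $G$-equivariant bijection onto $T_{\alpha}$; the HNN case is the same with the single vertex orbit $G/A^{g}$. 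Thus $\alpha$ and $\alpha^{g}$ represent the same element of $Split(G,\mathbb{Z})$ in the sense of Definition \ref{def:Group splitting}, which is already essentially the statement of the lemma, but I would spell out its consequence for intersection numbers as follows.

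Suppose $i(\alpha,\beta)=0$. By Definition \ref{def:intersection} there is a two-edge graph of groups $(\hat{X},\boldsymbol{G})$ with $\pi_{1}(\hat{X},\boldsymbol{G})\cong G$ such that collapsing one edge yields $\alpha$ and collapsing the other yields $\beta$; equivalently, the associated $G$-tree $\hat{T}$ collapses, along each of its two $G$-orbits of edges, onto $T_{\alpha}$ and onto $T_{\beta}$ respectively. Collapsing a $G$-invariant collection of edges is intrinsic to the $G$-tree, so since $T_{\beta}\cong T_{\beta^{g}}$ $G$-equivariantly by the previous paragraph, the same $\hat{T}$ witnesses $i(\alpha,\beta^{g})=0$; running the argument with $g^{-1}$ gives the converse, so $i(\alpha,\beta)=0$ if and only if $i(\alpha,\beta^{g})=0$. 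Since the condition in Definition \ref{def:intersection} is symmetric in the two splittings, $i(\alpha,\beta)=i(\beta,\alpha)$, so conjugating the first splitting is handled the same way; hence conjugating either splitting, or both, leaves the value of $i(\alpha,\beta)$ unchanged, in particular whether it is zero. One can phrase this identically in terms of the Guirardel core: the cores of $(T_{\alpha},T_{\beta})$ and of $(T_{\alpha},T_{\beta^{g}})$ are $G$-equivariantly isomorphic, hence simultaneously one-dimensional (cf.\ \cite{guirardel2004core,scott2000splittings}).

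The only point needing care, and the nearest thing to an obstacle, is purely bookkeeping: one must read ``collapsing one edge gives us the first splitting'' in Definition \ref{def:intersection} as being understood up to $G$-equivariant isomorphism of trees (equivalently, up to isomorphism of graphs of groups), so that the explicit identification $T_{\alpha}\cong T_{\alpha^{g}}$ suffices; and one must check that the tree map of the first paragraph is indeed well defined and $G$-equivariant in both the amalgamated and the HNN cases. Neither step is hard, and nothing beyond Bass--Serre theory is required.
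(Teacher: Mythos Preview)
Your argument is correct. The paper does not actually prove this lemma: its entire proof is the single line ``See \cite{scott2000splittings} remark 1.9.'' You have instead written out the underlying Bass--Serre reason, namely that $T_{\alpha}$ and $T_{\alpha^{g}}$ are $G$-equivariantly isomorphic via $hA^{g}\mapsto hgA$, so conjugate splittings already coincide as elements of $Split(G,\mathbb{Z})$ and compatibility is trivially preserved. This is exactly the content behind the cited remark, so your approach is not genuinely different, just self-contained; the benefit is that a reader need not chase the reference, at the cost of a paragraph of routine verification. Your closing caveat about reading Definition~\ref{def:intersection} up to $G$-equivariant isomorphism is the right thing to flag, and is implicit in the paper's identification of isomorphism classes of graphs of groups with $G$-equivariant isomorphism classes of Bass--Serre trees.
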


\begin{proof}
See \cite{scott2000splittings} remark 1.9.
\end{proof}
\begin{lem}
\label{lem:no inters commute}If two splittings $\alpha,\beta$ don't
intersect, then their Dehn twists commute
\end{lem}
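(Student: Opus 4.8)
The plan is to reduce the statement to an explicit computation inside a common refinement. By Definition \ref{def:intersection}, since $\alpha$ and $\beta$ do not intersect, there is a two-edge graph of groups $(X,\boldsymbol{G})$ with edge groups $\boldsymbol{G}_{e_\alpha}\cong\mathbb{Z}$ and $\boldsymbol{G}_{e_\beta}\cong\mathbb{Z}$ such that collapsing $e_\beta$ yields $\alpha$ and collapsing $e_\alpha$ yields $\beta$, and $\pi_1(X,\boldsymbol{G})\cong G$. First I would fix a maximal subtree $T$ and a base point $x_0$, and use Definition \ref{def:general Dehn autom} to write down the Dehn twist $T_\alpha$ as the Dehn twist of the collection $(\gamma_{e_\alpha}, \gamma_{e_\beta})$ with $\gamma_{e_\beta}=1$, and $T_\beta$ as the Dehn twist of $(\gamma_{e_\alpha}, \gamma_{e_\beta})$ with $\gamma_{e_\alpha}=1$; this is legitimate by the last bullet of Remark \ref{rem:compatible collapse}, which says a Dehn twist for a collapsed graph $(X/T',\boldsymbol{G}(X/T'))$ agrees with the Dehn twist for $(X,\boldsymbol{G})$ in which the collapsed edge parameters are set to $1$.

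Next I would simply compute the two compositions $T_\alpha\circ T_\beta$ and $T_\beta\circ T_\alpha$ on generators of $\pi_1(X,\boldsymbol{G},T)$, using the formulas
\[
t_e\mapsto \gamma_e t_e,\qquad g_v\mapsto \gamma_{e_1}\cdots\gamma_{e_n}\, g_v\, \gamma_{e_n}^{-1}\cdots\gamma_{e_1}^{-1}
\]
from Definition \ref{def:general Dehn autom}. The point is that with two edges, the combined twist $T_\alpha\circ T_\beta$ is exactly the Dehn twist of the full collection $(\gamma_{e_\alpha},\gamma_{e_\beta})$: on a stable letter $t_e$ one gets $\gamma_e t_e$ (only the parameter attached to that very edge appears, so the order of application is irrelevant), and on a vertex group element $g_v$ one gets conjugation by the product of the $\gamma_{e_i}$ over the edges $e_i$ on the path in $T$ from $x_0$ to $v$, again independent of the order in which the two twists are applied since each twist only contributes its own $\gamma$-factors in a fixed position along that path. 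Hence both composites equal the same automorphism, and $T_\alpha$ and $T_\beta$ commute.

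The one point that needs care — and is the main (mild) obstacle — is bookkeeping: one must check that the conjugating words attached to a vertex group really do combine without interference, i.e. that applying $T_\beta$ first does not alter the $\gamma_{e_\alpha}$-conjugation that $T_\alpha$ then performs (and vice versa). This is where I would be explicit: since $\gamma_{e_\alpha}\in Z(\boldsymbol{G}_{e_\alpha})$ and the edge map $\varphi_{e_\alpha}$ is injective, the element $\varphi_{e_\alpha}(\gamma_{e_\alpha})$ lies in the vertex group $\boldsymbol{G}_{v(e_\alpha)}$, and one tracks how it is moved by the other twist along the tree $T$; a short induction on the length of the path from $x_0$ to $v$ shows the two contributions sit side by side in the correct order, giving equality of the composites. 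Finally I would remark that the conclusion is independent of the chosen common refinement and base point by the first two bullets of Remark \ref{rem:compatible collapse} (changing base point conjugates both twists by the same element, preserving commutation), so the statement holds for the splittings $\alpha,\beta$ themselves.
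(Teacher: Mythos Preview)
Your proposal is correct and follows essentially the same approach as the paper: realize both Dehn twists on a common refinement via Definition \ref{def:general Dehn autom} and Remark \ref{rem:compatible collapse}, and then observe that they commute. The paper's proof is extremely terse (it simply says ``looking at that definition, we can see that the Dehn twists must commute''), whereas you spell out the explicit computation on generators and flag the one bookkeeping subtlety; this is a faithful expansion of exactly what the paper intends.
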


\begin{proof}
Since the the splittings don't intersect they have a common refinement.
Then their Dehn twists can be realized on their common refinement,
as in definition \ref{def:general Dehn autom}, looking at that definition,
we can see that the Dehn twists must commute (see remark \ref{rem:compatible collapse}).
\end{proof}

\subsection{\label{subsec:Profinite Bass-serre}Profinite groups and graphs}
\begin{defn}
\label{def:profin. graphs}A topological space $T$ is a profinite
space if it is the inverse limit of finite discrete spaces, and so
$T$ is profinite if it is compact, Hausdorff and totally disconnected.
\end{defn}

\begin{defn}
Given a discrete group $G$, one can define its profinite completion
$\hat{G}$ to be the limit of $G/N$ over all finite index normal
subgroup $N\vartriangleleft G$ and pro-p completion to be the limit
of $G/N$ over all normal subgroups $N$ with $|G/N|=p^{r}$ for some
integer $r$.
\end{defn}

\begin{rem}
The above actually gives a functor from the category of discrete groups
to that of profinite/pro-p groups. This completion Functor is right
exact.
\end{rem}

\begin{defn}
We call a group $G$ residually finite (resp. residually p), if for
every element $1\neq g\in G$, there is a homomorphism $\varphi:G\rightarrow H$
to a finite group (resp. p-group), such that $\varphi(g)\neq1$.
\end{defn}

\begin{rem}
A group is residually finite (resp. residually p) if and only if the
natural map to its profinite (resp. pro-p) completion is injective.
\end{rem}

\begin{defn}
A topological graph $X$ is a topological space $X$, together with
a subset of vertices $V(X)$ and two incidence maps $v_{0},v_{1}:X\rightarrow V(X)$
which are the identity on $V(X)$ (these are the maps sending an edge
to its terminal and initial vertices). It is called a profinite graph
if $X$ is a profinite space, the subset of vertices $V(X)$ is closed
(and hence profinite) and the two incidence maps $v_{0},v_{1}:X\rightarrow V(X)$
are continuous.
\end{defn}

If $E(X)=X\backslash V(X)$ is closed, it is enough to define $v_{0}$
and $v_{1}$ continuously on $E(X)$. In general, the set of edges
$E(X)$ of a profinite graph $X$ need not be a closed (and therefore
compact) subset of $X$.

Finite abstract graphs are profinite graphs. A morphism of profinite
graphs is a morphism of abstract graphs which is also continuous. 
\begin{fact}
Every profinite graph $X$ can be represented as an inverse limit
of finite quotient graphs of $X$.
\end{fact}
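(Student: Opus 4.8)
The plan is to reduce the statement to the corresponding standard fact for profinite spaces, namely that a profinite space $X$ is canonically the inverse limit of its finite discrete quotients, equivalently $X\cong\varprojlim_{\mathcal{R}}X/\mathcal{R}$ where $\mathcal{R}$ runs over the clopen equivalence relations on $X$, ordered by refinement. All that needs adding is bookkeeping of the graph structure. So first I would single out those clopen equivalence relations $\mathcal{R}$ that are \emph{compatible with the graph structure}, meaning that $(x,y)\in\mathcal{R}$ implies $(v_{0}(x),v_{0}(y))\in\mathcal{R}$ and $(v_{1}(x),v_{1}(y))\in\mathcal{R}$. For such $\mathcal{R}$, the quotient $X/\mathcal{R}$ is a finite set, and I would equip it with distinguished subset the image of $V(X)$ and with incidence maps induced by $v_{0},v_{1}$; compatibility is exactly what makes these induced maps well defined, and they restrict to the identity on the image of $V(X)$ because $v_{0},v_{1}$ already do so on $V(X)$. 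Thus $X/\mathcal{R}$ is a finite graph and the quotient map $X\to X/\mathcal{R}$ is a morphism of profinite graphs.

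Next I would prove the key cofinality statement: every clopen equivalence relation $\mathcal{R}_{0}$ is refined by a graph-compatible one. Take $\mathcal{R}:=\mathcal{R}_{0}\cap(v_{0}\times v_{0})^{-1}(\mathcal{R}_{0})\cap(v_{1}\times v_{1})^{-1}(\mathcal{R}_{0})$. It is an intersection of three equivalence relations, hence an equivalence relation; it is clopen since $\mathcal{R}_{0}\subseteq X\times X$ is clopen and $v_{0}\times v_{0}$, $v_{1}\times v_{1}$ are continuous; and it refines $\mathcal{R}_{0}$. Graph-compatibility follows from the identity $v_{i}(v_{j}(x))=v_{j}(x)$, valid because $v_{j}(x)\in V(X)$ and $v_{0},v_{1}$ fix $V(X)$ pointwise: if $(x,y)\in\mathcal{R}$ then $(v_{j}(x),v_{j}(y))\in\mathcal{R}_{0}$, and applying $v_{0}$ or $v_{1}$ to the pair $(v_{j}(x),v_{j}(y))$ returns it unchanged, so the two further membership conditions defining $\mathcal{R}$ hold automatically and $(v_{j}(x),v_{j}(y))\in\mathcal{R}$. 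Since the graph-compatible clopen equivalence relations are closed under finite intersection, they form a directed poset, and by this step they are cofinal among all clopen equivalence relations on $X$.

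Finally I would assemble the inverse limit. The transition maps $X/\mathcal{R}\to X/\mathcal{R}'$ for $\mathcal{R}\subseteq\mathcal{R}'$ are graph morphisms, so $\varprojlim_{\mathcal{R}}X/\mathcal{R}$ (limit over graph-compatible $\mathcal{R}$) is a profinite graph: the forgetful functor from profinite graphs to profinite spaces creates inverse limits, the vertex subset of the limit being the limit of the vertex subsets (a closed subset) and the incidence maps of the limit being the limits of the incidence maps (continuous). By the cofinality step the underlying profinite space of this limit is $\varprojlim$ over \emph{all} clopen equivalence relations, which is $X$; restricting to vertices recovers $V(X)$ because $V(X)$ is closed. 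Hence $X\cong\varprojlim_{\mathcal{R}}X/\mathcal{R}$ as profinite graphs. I do not expect a genuine obstacle here — the argument is entirely soft — but the one point that needs care is that a graph-compatible $\mathcal{R}$ need not saturate $V(X)$ (an edge may be $\mathcal{R}$-equivalent to a vertex), so one must define $V(X/\mathcal{R})$ as the image of $V(X)$ rather than as a quotient of $V(X)$ by $\mathcal{R}|_{V(X)}$, and check that the induced incidence maps remain well defined and remain the identity there; once this is arranged, finiteness of each quotient and the inverse-limit identification are immediate.
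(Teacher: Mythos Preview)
Your argument is correct and is essentially the standard one. The paper itself does not give a proof of this statement: it is recorded as a \textbf{Fact} and immediately used, with the surrounding material implicitly deferring to the standard reference \cite{ribes2017profinite}. So there is no ``paper's own proof'' to compare against; your write-up supplies exactly the routine verification the paper omits.

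One minor remark: your sentence ``restricting to vertices recovers $V(X)$ because $V(X)$ is closed'' is doing real work and could be unpacked. What you need is that if $x\notin V(X)$ then some graph-compatible clopen $\mathcal{R}$ has $[x]_{\mathcal{R}}\cap V(X)=\varnothing$. This follows from your cofinality step: closedness of $V(X)$ gives a clopen $U\ni x$ disjoint from $V(X)$, the partition $\{U,X\setminus U\}$ gives a clopen $\mathcal{R}_{0}$, and your graph-compatible refinement $\mathcal{R}\subseteq\mathcal{R}_{0}$ still has $[x]_{\mathcal{R}}\subseteq U$. You clearly have this in mind, but a reader might appreciate seeing it spelled out.
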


Using the above fact of representing a profinite graph as an inverse
limit of finite graphs, one can define the free profinite $\mathbb{F}_{p}$
on the space, and then similarly to the abstract situation, one can
define a chain complex and homology groups of $X$ with coefficients
in $\mathbb{F}_{p}$.

A profinite graph $X$ is said to be a pro-p tree if $X$ is connected
(if and only if $H_{0}(X,\mathbb{F}_{p})=0)$ and $H_{1}(X,\mathbb{F}_{p})=0$.
Thus $X$ is a pro-p tree if and only if $C(X,\mathbb{F}_{p})$ is
an exact sequence.
\begin{fact}
\label{fact:pro-p trees facts}(a) Every connected subgraph of a pro-p
tree is a pro-p tree. 

(b) An inverse limit of pro-p trees is a pro-p tree.

(c) Let $X_{i},i\in I$, be a family of pro-p subtrees of a pro-p
tree $T$. Then the subgraph $X=\cap_{i}X_{i}$ is a pro-p tree (possibly
empty).
\end{fact}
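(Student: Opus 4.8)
The final statement to prove is Fact~\ref{fact:pro-p trees facts}, the three closure properties of pro-$p$ trees. Let me sketch how I would approach each part.

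\textbf{Plan for part (a).} The plan is to use the homological characterization directly: a connected profinite subgraph $Y$ of a pro-$p$ tree $T$ is a pro-$p$ tree precisely when $H_1(Y,\mathbb{F}_p) = 0$ (connectedness is given by hypothesis, equivalently $H_0(Y,\mathbb{F}_p)=0$ in the reduced sense used here). First I would recall that the chain complex $C(Y,\mathbb{F}_p)$ embeds compatibly into $C(T,\mathbb{F}_p)$, since the free profinite $\mathbb{F}_p$-module on a closed subspace is a direct summand (or at least injects) into the free module on the ambient space, and the boundary maps are restrictions. The key step is then to show that the inclusion $Z_1(Y) \hookrightarrow Z_1(T)$ of $1$-cycles, combined with exactness of $C(T,\mathbb{F}_p)$ (so $Z_1(T) = B_1(T) = 0$ by the tree hypothesis, wait---rather $H_1(T)=0$ means every $1$-cycle of $T$ is a boundary, but $C_2$ is typically zero for a graph so in fact $Z_1(T)=0$). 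Since any $1$-cycle supported on $Y$ is in particular a $1$-cycle on $T$, it must vanish; hence $Z_1(Y)=0$ and $H_1(Y,\mathbb{F}_p)=0$. The main subtlety is making the passage to the profinite/continuous chain complexes rigorous: one writes everything as an inverse limit over finite quotient graphs and uses that $H_1$ of an inverse limit of finite graphs with surjective transition maps is the inverse limit of the $H_1$'s (Mittag--Leffler, automatic here since everything is finite). This reduces part (a) to the classical fact that a connected subgraph of a finite tree is a tree.

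\textbf{Plan for part (b).} For an inverse limit $T = \varprojlim T_i$ of pro-$p$ trees, I would argue that $C(T,\mathbb{F}_p) = \varprojlim C(T_i,\mathbb{F}_p)$ as chain complexes of profinite modules, and that an inverse limit of exact sequences of profinite (hence compact) modules is exact---this is the standard fact that $\varprojlim$ is exact on the category of profinite abelian groups (compactness kills the $\varprojlim^1$ obstruction). Since each $C(T_i,\mathbb{F}_p)$ is exact by hypothesis, so is the limit, giving that $T$ is a pro-$p$ tree. The only care needed is that connectedness also passes to the limit, which again follows from $H_0 = \varprojlim H_0 = 0$.

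\textbf{Plan for part (c), the main obstacle.} Part (c) is where I expect the real work. The intersection $X = \bigcap_i X_i$ of a family of pro-$p$ subtrees need not obviously be connected, yet the claim asserts it is a pro-$p$ tree (or empty). My plan is a two-step reduction. First, reduce to the case of a directed (filtered) family: replace $\{X_i\}$ by the family of all finite intersections $X_{i_1} \cap \cdots \cap X_{i_n}$, so it suffices to prove (i) the intersection of two pro-$p$ subtrees is a pro-$p$ tree or empty, and (ii) a filtered intersection is then handled by part (b) since $X = \varprojlim$ over the directed system of finite intersections. So the crux is: if $X_1, X_2$ are pro-$p$ subtrees of $T$ with $X_1 \cap X_2 \neq \emptyset$, then $X_1 \cap X_2$ is a connected pro-$p$ tree. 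For this I would work at the level of finite quotient trees: write $T = \varprojlim T_\lambda$ and track images $X_j^\lambda$ of $X_j$ in $T_\lambda$. The classical fact is that in a finite tree the intersection of two subtrees is again a subtree (connected) or empty---this is the Helly property of subtrees of a tree. The difficulty is that $X_j^\lambda$ need not equal the full image, and that $\bigcap_\lambda (X_1^\lambda \cap X_2^\lambda)$ might strictly contain $X_1 \cap X_2$, or connectedness might be lost in the limit. I would handle this by choosing, for each $\lambda$, the minimal subtree of $T_\lambda$ containing the image of $X_1 \cap X_2$, show these form a compatible inverse system whose limit is exactly $X_1 \cap X_2$ (using compactness and that $X_1 \cap X_2$ is closed), and then invoke parts (a) and (b) plus the finite Helly property. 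The honest statement is that this is the technically delicate point and one should cite or carefully reconstruct the argument from Ribes's book on profinite graphs and groups; I would present the finite Helly property as the geometric heart and the inverse-limit bookkeeping as the routine-but-careful remainder.
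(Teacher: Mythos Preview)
The paper does not prove this statement at all: it is recorded as a background \emph{Fact} in the preliminary subsection on profinite graphs, with no argument given and with the surrounding material implicitly pointing to Ribes's \emph{Profinite Graphs and Groups} as the source. So there is no ``paper's own proof'' to compare against.

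Your sketch is broadly in line with how these results are established in the standard references. Part (a) via the injection of chain complexes and the observation that $C_2=0$ forces $Z_1(T)=0$, hence $Z_1(Y)=0$, is exactly right. Part (b) via exactness of inverse limits on profinite modules is also the standard route. For part (c), your reduction to finite intersections plus (b), and then to the two-subtree case, is the correct strategy; the honest caveat you flag about controlling images in finite quotients and recovering $X_1\cap X_2$ as the inverse limit is precisely where the work lies, and it is indeed carried out carefully in Ribes's book (Chapter~2). Since the paper treats this as a citation rather than something to be proved, your plan goes well beyond what the paper itself does; if you were writing this up you could simply cite Ribes, as the author implicitly does.
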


Let $G$ be a pro-p group and $X$ a profinite graph. Suppose that
$G$ acts continuously on the profinite space $X$ from the left. 

We say that $G$ acts on the profinite graph $X$ (from the left)
if $v_{i}(gx)=gv_{i}(x)$ for all $g\in G$,$x\in X$ and $i=0,1$. 

Define $G_{x}=\{g\in G:gx=x\}$ to be the stabilizer of an element
$x\in X$.

Let $\varphi_{X}:G\times X\rightarrow X$ be the map defined by $\varphi_{X}(g,x)=gx$
and $pr_{G}:G\times X\rightarrow G$ be the projection. 

Then $G_{x}=pr_{G}(\varphi_{X}^{-1}(x)\cap(G\times\{x\}))$, and so
$G_{x}$ is a closed subgroup of $G$.

If the stabilizer $G_{x}$ of every element $x\in X$ is trivial,
we say that $G$ acts freely on $X$.

If a pro-p group $G$ acts on a profinite graph $R$, then the space
$G\backslash X=\{Gx:x\in X\}$ of $G$-orbits with the quotient topology
is a profinite space which admits a natural profinite graph structure
as follows:

$V(G\backslash X)=G\backslash V(X),v_{i}(Gx)=Gv_{i}(x)$ for $i=0,1$.

So one has a natural epimorphism $X\rightarrow G\backslash X$ of
profinite graphs.

If $N\vartriangleleft G$, there is an action of $G/N$ on $N\backslash X$
defined in the standard way: $(gN)(Nx)=Ngx$ for $g\in G$, $x\in X$.

Let a pro-p group $G$ act on a profinite graph $X$ and let $N\vartriangleleft G$,
consider the natural action of $G/N$ on $N\backslash X$ defined
above. Then the stabilizers in $G/N$ are epimorphic images of the
corresponding stabilizers in $G$.
\begin{example}
\label{exa:pro-p cayley graph}Let $G$ be a pro-p group and $S$
a closed subset of $G$ containing $1$. 

Define the Cayley graph $X(G,S$) as follows: $X(G,S)=G\times S$,
$V(X(G,S))=G\times\{1\}$, $v_{0}(g,s)=(g,1)$, $v_{1}(g,s)=(gs,1)$
for all $g\in G$,$s\in S$. 

Then $X(G,S)$ is a profinite graph. Define a left action of $G$
on $X(G,S)$ by setting $g'\cdot(g,s)=(g'g,s)$ for all $(g,s)\in G\times S,g'\in G$.

Clearly $gv_{i}(x)=v_{i}(g(x))$ for all $g\in G,x\in X(G,S)$ and
$i=0,1$, so $G$ acts freely on the Cayley graph $X(G,S)$.
\end{example}

\begin{defn}
\label{def:pro-p amalgamted}Let $A$, $B$ and $C$ be pro-p groups
and $\varphi_{1}:C\rightarrow A$, $\varphi_{2}:C\rightarrow B$,
be monomorphisms of pro-p groups.

The amalgamated free pro-p product of $A$ and $B$ with amalgamated
subgroup $C$ is a pushout in the category of pro-p groups 
\[
\xymatrix{C\ar[d]_{\varphi_{2}}\ar[r]^{\varphi_{1}} & A\ar[d]_{f_{1}}\\
B\ar[r]^{f_{1}} & A*_{C}B
}
\]
 In other words, a pro-p group $G=A*_{C}B$, satisfying the following
universal property: 

For any pair of homomorphisms $\psi_{1}:A\rightarrow K$,$\psi_{2}:B\rightarrow K$
into a pro-p group $K$ with $\psi_{1}\varphi_{1}=\psi_{2}\varphi_{2}$,
there exists a unique homomorphism $\psi:G\rightarrow K$ such that
the following diagram is commutative:
\[
\xymatrix{C\ar[d]_{\varphi_{2}}\ar[r]^{\varphi_{1}} & G_{1}\ar[d]_{f_{1}}\ar@/^{1pc}/[ddr]^{\psi_{1}}\\
G_{2}\ar[r]^{f_{2}}\ar@/_{1pc}/[drr]_{\psi_{2}} & G\ar@{-->}[dr]^{\psi}\\
 &  & K
}
\]

It follows from the existence and uniqueness of push outs that an
amalgamated free pro-p product exists and it is unique.
\end{defn}

\begin{defn}
\label{def: pro-p HNN}Let $A$ be a pro-p group and $\theta:C'\rightarrow C$
a continuous isomorphism from a pro-p group to a closed subgroups
of $A$, the pro-p HNN extension of $A$ with respect to $(\theta,C)$,
is a pro-p group $G=HNN(A,\theta,C)=A*_{\theta}$, with an element
$t\in G$ and a homomorphism $\varphi:A\rightarrow G$, with the following
universal property: 

For any pro-p group $K$, any $k\in K$ and any homomorphism $\psi:A\rightarrow K$
satisfying $k\psi((c))k^{-1}=\psi(\theta(c))$ for all $c\in C'$,
there is a unique homomorphism $\omega:G\rightarrow K$ such that
the diagram
\[
\xymatrix{G\ar@{-->}[dr]^{\omega}\\
H\ar[u]^{\varphi}\ar[r]^{\psi} & K
}
\]
 is commutative and $\omega(t)=k$.
\end{defn}

\begin{rem}
Note that the above universal properties are the same as the ones
for discrete HNN and amalgamated products, but now in the category
of pro-p groups.

Also note that it is enough to check the universal property just for
finite p-groups $K$.
\end{rem}

\begin{defn}
We again define a Dehn twist of HNN extensions and amalgamated free
products, but now for pro-p/profinite versions:

Given a group and a $\mathbb{Z}_{p}$ splitting $A*_{<c>}B$, we define
the Dehn twist $\delta$ to be the automorphism $\delta(a)=a,\delta(b)=\varphi(c)b\varphi(c)^{-1}$
for all $a\in A,b\in B$. This extends to $A*_{<c>}B$ by the universal
property

Similarly given an HNN extension $A*_{\theta}$ where $\theta:<c>\rightarrow A$
we define the Dehn twist $\delta$ to be $\delta(a)=a$ for all $a\in A$
and $\delta(t)=ct$. Again this extends to $A*_{\theta}$ by the universal
property.
\end{defn}

\begin{defn}
More generally, let $(X,\boldsymbol{G})$ be a finite graph of profinite
groups, and let $T\subset X$ be a maximal subtree X. 

The profinite fundamental group of the graph of groups $(X,\boldsymbol{G})$
with respect to $T$, is a profinite group $\pi_{1}(X,\boldsymbol{G},T)$,
and a map
\[
\psi:\coprod_{v\in V(X)}\boldsymbol{G}_{v}\amalg\coprod_{e\in E(X)}<t_{e}>\rightarrow\pi_{1}(X,\boldsymbol{G},T)
\]
 such that $\varphi(t_{e})=1$ for all $e\in E(T)$ and $\psi(t_{e}\varphi_{e}(c)t_{e}^{-1})=\psi(\varphi_{\bar{e}}(c))$
for all $e\in E(X)$ and $c\in\boldsymbol{G}_{e}$, and such that
$(\pi_{1}(X,\boldsymbol{G},T),\psi)$ are universal with respect to
these properties.
\end{defn}

The above exists and is independent of the maximal subtree $T$, and
in fact is equal to 
\[
((\coprod_{v\in V(X)}\boldsymbol{G}_{v})\amalg F)/N
\]
where $F$ is the free profinite group generated by $t_{e}$ for all
$e\in E(X)$ and $N$ is the smallest closed normal subgroup of $\coprod_{v\in V(X)}\boldsymbol{G}_{v})\amalg F$
containing $\{t_{e}:e\in E(T)\}\cup\{\varphi_{\bar{e}}(c)^{-1}t_{e}\varphi_{e}(c)t_{e}^{-1}:e\in E(X),c\in\boldsymbol{G}_{e}\}$.
This all also works similarly for finite graphs of pro-p groups (see
\cite{ribes2017profinite} Section 6). 

Similarly to classical Bass-Serre theory, we have the following (this
can also be found in \cite{ribes2017profinite} Section 6):
\begin{thm}
Let $(X,\boldsymbol{G})$ be a finite graph of profinite groups. Then
there exists a unique profinite tree $S((X,\boldsymbol{G}))$, called
the standard graph of $(X,\boldsymbol{G})$, on which $\pi_{1}(X,\boldsymbol{G})$
acts, such that the quotient graph $\pi_{1}((X,\boldsymbol{G}))\backslash S((X,\boldsymbol{G}))$
is isomorphic to $X$, and such that the stabilizer of a point $s\in S(X,\boldsymbol{G})$,
is a conjugate of $Im(\boldsymbol{G}_{p(s)}\rightarrow\pi_{1}(X,\boldsymbol{G}))$,
where $p:S((X,\boldsymbol{G}))\rightarrow X$ is the projection map.

Conversely an action of a profinite group on a profinite tree for
which the quotient is a finite graph, gives rise to a decomposition
of the profinite group as a finite graph of profinite groups.
\end{thm}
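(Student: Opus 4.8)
The plan is to carry the classical Bass--Serre dictionary over to the category of profinite (here, pro-$p$) graphs, following the development in \cite{ribes2017profinite}. Write $G=\pi_{1}(X,\boldsymbol{G})$, presented via the vertex groups, stable letters $t_{e}$ and the relations recalled above (for a chosen maximal subtree of $X$). I would construct the standard graph $S=S(X,\boldsymbol{G})$ by imitating the coset description of the Bass--Serre tree: its underlying profinite space is the \emph{finite} disjoint union $\coprod_{m\in V(X)\cup E(X)}G/\mathrm{Im}(\boldsymbol{G}_{m}\to G)$ of coset spaces (each profinite, since every $\boldsymbol{G}_{m}$ maps to a closed subgroup of $G$, and $V(X)\cup E(X)$ is finite), with vertex set the sub-union over $V(X)$, and with incidence maps read off edge-by-edge from the monomorphisms $\varphi_{e}$ and the stable letters $t_{e}$ exactly as in Serre's construction. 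First I would check these maps are continuous, so that $S$ is a profinite graph, and that left translation gives a continuous $G$-action on $S$ without edge inversions.

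Next comes the quotient: the orbit map $g\,\boldsymbol{G}_{m}\mapsto m$ identifies $G\backslash S$ with $X$, which is a finite combinatorial check together with a continuity verification. The point stabilizers then fall out of the coset description --- the stabilizer of $g\,\boldsymbol{G}_{m}$ is $g\,\mathrm{Im}(\boldsymbol{G}_{m}\to G)\,g^{-1}$, and every $s\in S$ lies in exactly one such coset. The substantive point is to show $S$ is a profinite tree, i.e.\ (in the pro-$p$ world) that $H_{0}(S,\mathbb{F}_{p})=\mathbb{F}_{p}$ and $H_{1}(S,\mathbb{F}_{p})=0$; connectedness should follow from connectedness of $X$ together with the ``bouquet of cosets'' structure over each $m$, while the vanishing of $H_{1}$ is precisely where the defining relations of $G$ enter. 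I would prove this by writing $S$ as the inverse limit, over open normal subgroups $U\trianglelefteq G$, of the finite graphs $S/U=\coprod_{m}\bigl(U\backslash G/\mathrm{Im}(\boldsymbol{G}_{m}\to G)\bigr)$, reducing the homology statement to a computation on these finite graphs, and passing to the limit using exactness of inverse limits of the chain complexes $C_{\bullet}(S/U,\mathbb{F}_{p})$.

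For uniqueness, if $S'$ is another profinite tree with a $G$-action, quotient $X$ and the prescribed stabilizers, then choosing compatible base lifts of the vertices one builds a $G$-equivariant morphism $S\to S'$, and a $G$-equivariant morphism of pro-$p$ trees inducing an isomorphism on quotients and matching point stabilizers is forced to be an isomorphism (the rigidity statement for standard graphs in \cite{ribes2017profinite}; alternatively one reads a graph-of-groups decomposition off $S'$ and invokes the universal property of $\pi_{1}(X,\boldsymbol{G})$). For the converse, given a profinite group $G$ acting on a pro-$p$ tree $T$ with $G\backslash T\cong X$ finite, I would first produce a connected transversal: a closed connected subgraph $D\subseteq T$ mapping homeomorphically onto a maximal subtree of $X$, together with a continuous choice of lift for the remaining edges. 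Then set $\boldsymbol{G}_{v}=G_{\tilde v}$ and $\boldsymbol{G}_{e}=G_{\tilde e}$ for the chosen lifts, take $\varphi_{e}$ to be the inclusion of stabilizers, pick $t_{e}\in G$ carrying the terminal vertex of the lift of $e$ back into $D$, and verify that this data satisfies the universal property defining $\pi_{1}(X,\boldsymbol{G})$ --- the key point being that any relation among the $\boldsymbol{G}_{v}$ and the $t_{e}$ in $G$, read through the action on $T$, fixes the base vertex and hence, by acyclicity of $T$, collapses to a consequence of the graph-of-groups relations.

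The hard part will be the tree property of $S$: unlike the discrete case there is no normal-form theorem for amalgamated pro-$p$ products and pro-$p$ HNN extensions to lean on, so the $H_{1}$-computation genuinely has to be run on the finite quotients $S/U$ while keeping careful track of how the $G$-actions are compatible as $U$ shrinks. Dually, on the converse side the analogous obstacle is the existence of the connected transversal $D$ --- the profinite substitute for ``lift a maximal subtree'' --- which is obtained by a Zorn's-lemma argument on connected subgraphs of $T$ mapping injectively to $X$ and again rests on $T$ being a pro-$p$ tree. Since both of these are established in \cite{ribes2017profinite}, in the paper itself the argument compresses to the explicit construction together with citations; the purpose of the plan above is to indicate where the real content sits.
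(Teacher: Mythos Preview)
Your proposal is correct and follows the standard Ribes construction, but note that the paper does not actually prove this theorem at all: it is stated as background with the parenthetical ``this can also be found in \cite{ribes2017profinite} Section 6'' and no argument is given. So your plan, while sound and matching the approach of the cited reference, goes well beyond what the paper itself does; in the paper the ``proof'' is simply the citation.
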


\begin{rem}
Note that one has fundamental groups for graphs of pro-p groups, depends
on a universal property, coming from $\boldsymbol{G}(v)$ and $t_{e}$.
So a similar definition for a Dehn twists of graphs of pro-p groups
can be defined (similar to \ref{def:general Dehn autom}). Also note
that in general, it is not the case that it is just a pro-p completion
of a discrete Dehn twist.
\end{rem}

\section{\label{sec:Main results}Demuskin groups}

\subsection{Background on Poincare duality groups}

We now turn our focus to Demuskin groups, which are part of a larger
class of pro-p groups, which satisfy a version of Poincare duality.
\begin{defn}
Let $G$ be an infinite pro-p group.

Then $G$ is a Poincare duality group of dimension $n$ if:

- $dim_{\mathbb{F}_{p}}H^{i}(G,\mathbb{F}_{p})<\infty$ for all $i\leq n$

- $dim_{\mathbb{F}_{p}}H^{n}(G,\mathbb{F}_{p})=1$

- for all $\text{0\ensuremath{\leq i\leq n}}$ the cup product $H^{i}(G,\mathbb{F}_{p})\times H^{n-i}(G,\mathbb{F}_{p})\rightarrow H^{n}(G,\mathbb{F}_{p})$
is a non-degenerate bilinear form. 
\end{defn}

These above conditions are equivalent to a more ``standard'' Poincare
duality as follows:
\begin{defn}
Let $G$ be a profinite group of cohomological dimension $n$, such
that for every $G$-module $A$ which is a finite group, $H^{n}(G,A)$
is finite.

Define the dualizing module to be the torsion group $G$-module $I$,
which represents the functor $A\mapsto H^{n}(G,A)^{*}$
\end{defn}

We have the following proposition, which can be found in \cite{serre1979galois}:
\begin{prop}
Let $G$ be an $n$-dimensional Poincare pro-$p$ group, then it has
a dualizing module $I$, and we have:

(a) $I$ is isomorphic to $\mathbb{Q}_{p}/\mathbb{Z}_{p}$ as an abelian
group. 

(b) The canonical homomorphism $i:H^{n}(G,I)\rightarrow\mathbb{Q}/\mathbb{Z}$
is an isomorphism of $H^{n}(G,I)$ with $\mathbb{Q}_{p}/\mathbb{Z}_{p}$
(as a subgroup of $\mathbb{Q}/\mathbb{Z}$). 

(c) For all finite $G$-modules $A$ and for all integers $i$, the
cup-product 
\[
H^{i}(G,A)\times H^{n-i}(G,Hom(A,I))\rightarrow H^{n}(G,I)=\mathbb{Q}_{p}/\mathbb{Z}_{p}
\]
 gives a duality between the finite groups $H^{i}(G,A)$ and $H^{n-i}(G,Hom(A,I))$. 
\end{prop}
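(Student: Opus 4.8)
The plan is to follow Serre's treatment (\cite{serre1979galois}, Sections I.3.5 and I.4.5) and organize the argument into four stages: verifying that the hypotheses needed to define a dualizing module are met (finiteness of $H^{n}(G,A)$ for finite $A$), constructing $I$, identifying it as an abelian group, and finally establishing the duality in (c) by dimension shifting.

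First I would check that $\mathrm{cd}(G)=n$ and that $H^{n}(G,A)$ is finite for every finite discrete $G$-module $A$. Since $G$ is pro-$p$, any finite $G$-module admits a composition series all of whose quotients are copies of $\mathbb{F}_{p}$ with trivial action; using the long exact cohomology sequences and induction on the length of such a series, both the vanishing $H^{n+1}(G,A)=0$ and the finiteness of $H^{n}(G,A)$ reduce to the Poincaré duality axioms for $\mathbb{F}_{p}$ (namely $\dim_{\mathbb{F}_{p}}H^{n}(G,\mathbb{F}_{p})=1$ and, from non-degeneracy in top degree together with $H^{i}$ being finite-dimensional for $i\le n$, that nothing survives above degree $n$). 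Passing to direct limits extends the vanishing to all torsion modules, so that $H^{n}(G,-)$ is right exact on torsion modules. The functor $A\mapsto H^{n}(G,A)^{*}$ (Pontryagin dual) on finite $G$-modules is then left exact and contravariant, and one shows it is representable by a torsion $G$-module $I$; one then extends $H^{n}(G,-)$ to arbitrary torsion modules via $H^{n}(G,M)=\varinjlim H^{n}(G,A)$ over finite submodules $A\subseteq M$.

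For part (a): the $p$-torsion submodule satisfies $I[p]=\mathrm{Hom}_{G}(\mathbb{F}_{p},I)=H^{n}(G,\mathbb{F}_{p})^{*}$, which is one-dimensional over $\mathbb{F}_{p}$, and $I$ is $p$-primary because $G$ is pro-$p$. To see that $I$ is divisible, apply $H^{n}(G,-)$ to $0\to\mathbb{Z}/p^{m}\to\mathbb{Z}/p^{m+1}\to\mathbb{Z}/p\to0$: since $H^{n+1}(G,\mathbb{Z}/p^{m})=0$, the maps $H^{n}(G,\mathbb{Z}/p^{m+1})\to H^{n}(G,\mathbb{Z}/p^{m})$ are surjective, so the dual system $H^{n}(G,\mathbb{Z}/p^{m})^{*}$ is an increasing union of finite groups with successive quotients killed by $p$, i.e. multiplication by $p$ is surjective on $I$. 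A $p$-primary divisible torsion abelian group with $I[p]\cong\mathbb{Z}/p$ is isomorphic to $\mathbb{Q}_{p}/\mathbb{Z}_{p}$, which gives (a). Part (b) then amounts to noting that the canonical evaluation map $H^{n}(G,I)=\varinjlim H^{n}(G,A)\to\mathbb{Q}/\mathbb{Z}$, adjoint to $\mathrm{id}_{I}$ under the defining isomorphism $H^{n}(G,A)^{*}\cong\mathrm{Hom}_{G}(A,I)$, is compatible with the direct limit and on each layer $A=I[p^{m}]$ is the perfect pairing $H^{n}(G,A)\times\mathrm{Hom}_{G}(A,I)\to\mathbb{Q}_{p}/\mathbb{Z}_{p}$; passing to the limit shows $i$ is an isomorphism of $H^{n}(G,I)$ onto $\mathbb{Q}_{p}/\mathbb{Z}_{p}$.

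Part (c) is the crux and I would prove it by dévissage on the cohomological degree $i$. The case $i=n$ is essentially the definition of $I$ combined with (b): the pairing $H^{n}(G,A)\times H^{0}(G,\mathrm{Hom}(A,I))\to H^{n}(G,I)$ is exactly $H^{n}(G,A)\times\mathrm{Hom}_{G}(A,I)\to\mathbb{Q}_{p}/\mathbb{Z}_{p}$, which is perfect by construction, and the case $i=0$ follows either symmetrically (using double duality $\mathrm{Hom}(\mathrm{Hom}(A,I),I)\cong A$ for finite $A$) or by one step of dimension shifting. For $0<i<n$, embed $A$ into a module $M$ with $H^{j}(G,M)=0$ for $j>0$ and $H^{j}(G,\mathrm{Hom}(M,I))=0$ for $j<n$ (a suitable (co)induced module), with quotient $B$; the long exact sequences then give isomorphisms $H^{i}(G,B)\cong H^{i+1}(G,A)$ and $H^{n-i-1}(G,\mathrm{Hom}(B,I))\cong H^{n-i}(G,\mathrm{Hom}(A,I))$, and compatibility of the cup product with connecting homomorphisms identifies the pairing for $(A,i+1)$ with that for $(B,i)$. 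Descending induction on $i$ with base case $i=n$ then yields (c) in all degrees. The main obstacle I anticipate is precisely this last bookkeeping: checking that $\mathrm{Hom}(-,I)$ is exact on the modules in play (it is, since $I\cong\mathbb{Q}_{p}/\mathbb{Z}_{p}$ is injective as an abelian group and the modules are finite or profinite), that the cup products commute with the boundary maps up to the expected sign, and that the resulting squares of finite groups commute so that a five-lemma argument applies at each stage.
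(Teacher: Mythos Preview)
Your proposal is correct and in fact goes well beyond what the paper does: the paper does not prove this proposition at all but simply states it with the remark ``which can be found in \cite{serre1979galois}'', so your outline \emph{is} Serre's argument that the paper is citing. There is nothing to compare here beyond noting that your write-up supplies the details the paper omits.
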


Denote by $U_{p}$ the group of $p$-adic units. We have that $U_{p}$
is the automorphism group of $I$. Since $G$ acts on $I,$ we have
that this action is given by a canonical homomorphism
\[
\chi:G\rightarrow U_{p}
\]
 This homomorphism is continuous; it determines $I$ (up to isomorphism),
we call $\chi$ the orientation character of $G$. This character
is an important invariant of a Poincare duality group.
\begin{example}
If $G$ is a $p$-adic analytic group of dimension $n$, which is
compact and torsion-free, then $G$ is a Poincare group of dimension
$n$ (see \cite{lazard1965groupes}). 
\end{example}

Let $G$ be a Poincare pro-p group of dimension $n>0$.

Every open subgroup of $G$ is also a Poincare duality group of dimension
$n$.

Let $H$ be a proper closed subgroup of $G$, then $Res:H^{n}(G,\mathbb{F}_{p})\rightarrow H^{n}(H,\mathbb{F}_{p})$
is 0. 

If we also have that $[G:H]=\infty$ (i.e. that $H$ is not open),
then we get that the cohomological dimension of $H$ is $\leq n$.
In particular, every closed subgroup of infinite index of a Demuskin
group is a free pro-p-group. 

Let $G$ be a Demuskin group and let $H$ be an open subgroup of $G$.
One has: 
\[
rank(H)-2=[G:H](rank(G)-2).
\]

\subsection{Main results}

We now turn to Poincare duality groups of dimension 2, which are also
known as Demuskin groups.

Such groups, are in fact one relator groups, which are actually fully
determined by two invariants associated to a Demuskin group: the minimal
number of (topological) generators, and an extra invariant, which
depends on the orientation character, which is either $\infty$ or
a power of the prime $p$. To be more precise, this invariant is the
highest power $p^{r}$, such that the induced map $\chi:G\rightarrow Aut(\mathbb{Z}_{p}/p^{r}\mathbb{Z}_{p})$
is trivial.
\begin{thm}
\label{thm:Demushkin relations }(\cite{demushkin1961group}) Let
$G$ be a Demushkin group with $d'$ generators and orientation character
which vanishes mod $p^{r}$ but not mod $p^{r+1}$, and suppose that
$p\neq2$.

Then $d'$ is even, $d'=2d$, and $G$ is isomorphic to $F/R$, where
$F$ is a free pro-p group with basis $x_{1,}y_{1},...,x_{d},y_{d}$
and $R$ is generated as a normal closed subgroup by $x_{1}^{p^{r}}[x_{1},y_{1}]...[x_{d},y_{d}]$.

For $r=\infty$ (i.e. trivial orientation character), we replace $x_{1}^{\infty}$
by the identity (and so the relation is just the usual surface group
relation $[x_{1},y_{1}]...[x_{d},y_{d}]=1$).

Furthermore all groups having such presentations are Demushkin.
\end{thm}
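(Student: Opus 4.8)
The plan is to prove the two halves separately: the structure theorem for a given Demushkin group $G$, and the converse that every group with a presentation of the stated shape is a Demushkin group. The core of the first half is the principle that a $2$-dimensional Poincar\'e duality pro-$p$ group is a one-relator pro-$p$ group whose single relator can be brought to a canonical form by studying its image in the graded Lie algebra of the free pro-$p$ group.

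\textbf{Reduction to one relator and evenness of $d'$.} Since $G$ is finitely generated pro-$p$ with $\dim_{\mathbb{F}_p}H^1(G,\mathbb{F}_p)=d'$, pick a minimal presentation $1\to R\to F\to G\to 1$ with $F$ free pro-$p$ of rank $d'$; minimality forces $R\subseteq\Phi(F)$. The minimal number of relations is $\dim_{\mathbb{F}_p}H^2(G,\mathbb{F}_p)$, which equals $1$ for a $2$-dimensional Poincar\'e group, so $R$ is the closed normal subgroup generated by a single $r\in\Phi(F)$. Now pass to the graded restricted Lie algebra $\mathrm{gr}(F)=\bigoplus_n D_n/D_{n+1}$ of the mod-$p$ lower central (Zassenhaus) filtration. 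Because $p\neq 2$, the $p$-th power of a degree-one element sits in $D_p$ with $p\ge 3$, so the image $\bar r$ of $r$ in $\mathrm{gr}_2(F)\cong\Lambda^2 V$ (with $V=\mathrm{gr}_1(F)$) is a pure alternating $2$-form, with no $p$-power contribution. Under the standard identification of the cup product $H^1(G,\mathbb{F}_p)\times H^1(G,\mathbb{F}_p)\to H^2(G,\mathbb{F}_p)$ with the coefficients of $\bar r$, non-degeneracy of the cup product (part of the Poincar\'e duality hypothesis) says exactly that $\bar r$ is a non-degenerate alternating form. Such forms exist only in even dimension, hence $d'=2d$, and a symplectic (Darboux) change of the free generators makes $r\equiv[x_1,y_1][x_2,y_2]\cdots[x_d,y_d]\pmod{D_3}$.

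\textbf{Successive approximation to the normal form.} From this congruence modulo $D_3$ one improves $r$ step by step along $D_3\supseteq D_4\supseteq\cdots$. At stage $m$ the discrepancy between $r$ and the target word $x_1^{p^r}[x_1,y_1]\cdots[x_d,y_d]$ lies in $D_m$, and one cancels it modulo $D_{m+1}$ using (i) conjugation of $r$, (ii) the freedom to replace $r$ by another generator of its normal closure, and (iii) a pro-$p$ automorphism of $F$ congruent to the identity at low filtration degree. What makes the process terminate into a genuine equality is that the relation module $R^{\mathrm{ab}}=R/\overline{[R,R]}$ is a \emph{cyclic} $\mathbb{Z}_p[[G]]$-module (again a consequence of $\dim_{\mathbb{F}_p}H^2(G,\mathbb{F}_p)=1$), together with Magnus-type computations in the completed group algebra $\mathbb{Z}_p[[F]]$; the single surviving $p$-power term $x_1^{p^r}$ cannot be removed, and its exponent is pinned down by $G^{\mathrm{ab}}\cong\mathbb{Z}_p^{\,d'-1}\oplus\mathbb{Z}/p^r$ (with the convention that for $r=\infty$, i.e. $G^{\mathrm{ab}}$ torsion-free, the term $x_1^{p^r}$ is replaced by $1$ and $r$ is the usual surface relator). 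Reading off the $G$-action on $R^{\mathrm{ab}}$ then identifies the orientation character $\chi$ and shows it vanishes modulo $p^r$ but not modulo $p^{r+1}$, matching the stated invariant. I expect this step to be the main obstacle: it is where the uniqueness of the normal form must actually be established, where the $p$-power term has to be handled separately from the commutator part, and where $p\neq 2$ is used a second time (in characteristic $2$ one instead gets the exceptional Demushkin presentations with extra $x_i^{2^k}$ terms).

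\textbf{The converse.} Given $G=F/\overline{\langle x_1^{p^r}[x_1,y_1]\cdots[x_d,y_d]\rangle^F}$ with $F$ free pro-$p$ of rank $2d$, one verifies the three conditions in the definition of a $2$-dimensional Poincar\'e duality group. The relator lies in $\Phi(F)$, so the presentation is minimal and $\dim_{\mathbb{F}_p}H^1(G,\mathbb{F}_p)=2d$ is finite; it is a single relator which is not a proper $p$-th power, so by the cohomology theory of one-relator pro-$p$ groups $\mathrm{cd}(G)=2$ and $\dim_{\mathbb{F}_p}H^2(G,\mathbb{F}_p)=1$; and the cup product on $H^1(G,\mathbb{F}_p)$ computed from the relator is the standard symplectic pairing coming from the $[x_i,y_i]$ factors, hence non-degenerate. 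These are precisely the defining properties of a Demushkin group, and a final computation of the orientation character (once more via the relation module) confirms that it vanishes modulo $p^r$ but not modulo $p^{r+1}$, so the equivalence is complete.
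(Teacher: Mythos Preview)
The paper does not give its own proof of this theorem: it is stated as a known classification result with the attribution \cite{demushkin1961group}, followed only by the remark that the $p=2$ case was completed by Serre and Labute. There is therefore nothing in the paper to compare your argument against.

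That said, your outline is broadly the classical Demushkin--Serre--Labute strategy: use $\dim H^{2}=1$ to get a one-relator presentation, read the cup product as a non-degenerate alternating form on $H^{1}$ to force $d'$ even and get a symplectic basis modulo the second filtration step, then refine the relator to normal form by an inductive lifting along the filtration, with the $p$-power exponent fixed by the torsion in $G^{\mathrm{ab}}$. One caution about your ``successive approximation'' step: in the actual proofs (e.g.\ Labute) this is not a straightforward step-by-step cancellation along the Zassenhaus filtration as you describe. The argument passes through the structure of $G/\overline{G^{p^{n}}[G,G]}$ for varying $n$ and a careful analysis of how the relator behaves under automorphisms of $F$, and the case $r<\infty$ genuinely requires more than ``the $p$-power term cannot be removed''; one has to show that the exponent is an invariant and that all higher-degree discrepancies can be absorbed, which is where most of the technical work lies. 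Your sketch flags this as the expected obstacle but does not indicate how it is overcome, so as written it is an outline rather than a proof.
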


In the case when $G$ is a Demushkin group with $p=2$, the classification
was completed by Serre \cite{serre1962structure} and Labute \cite{labute1967classification}.

Our interest in Demuskin groups comes from two reasons, for discrete
groups, the only Poincare duality groups of dimension 2 are surface
groups.

The other and more important reason is the following classification
of the maximal pro-p quotient of the absolute Galois group of a p-adic
field (or the Galois group of the maximal p-extension);
\begin{thm}
Let $K$ be a p-adic field, of degree $N$ over $\mathbb{Q}_{p}$,
and suppose $p\neq2$:

1) If $\mu_{p}\nsubseteq K$, then $G_{K}(p)$ is a free pro-p group
of rank $N+1$

2)If $\mu_{p}\subseteq K$, let $r$ be the maximal power such that
$\mu_{p^{r}}\subseteq K$ (i.e. $\mu_{p^{r+1}}\nsubseteq K$), then
$G_{K}(p)$ is a Demuskin group on $N+2$ generators and invariant
$p^{r}$ (note that $N$ must be even since $\mu_{p}\subseteq K$),
and dualizing module, the group of all roots of unity in the maximal
p-extension of $K$.

In other words, $G_{K}(p)$ is the pro-p group on the generators $x_{1},y_{1},...,x_{\frac{N}{2}+1},y_{\frac{N}{2}+1}$
and relation $x_{1}^{p^{r}}[x_{1},y_{1}]...[x_{\frac{N}{2}+1},y_{\frac{N}{2}+1}]$.
\end{thm}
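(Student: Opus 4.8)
The plan is to compute the mod-$p$ cohomology of $G_K(p)$, recognise it via the list in Theorem \ref{thm:Demushkin relations }, and then pin down the invariant $p^r$ by identifying the dualizing module with the group of $p$-power roots of unity.

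First I would record the cohomology of the full Galois group $G_K$. By local class field theory $G_K^{ab}$ is the profinite completion of $K^\times\cong\mathbb{Z}\times\mu(K)\times\mathbb{Z}_p^{N}$, so
\[
\dim_{\mathbb{F}_p}H^1(G_K,\mathbb{F}_p)=\dim_{\mathbb{F}_p}K^\times/(K^\times)^p=N+1+\varepsilon,
\]
where $\varepsilon=1$ if $\mu_p\subseteq K$ and $\varepsilon=0$ otherwise. By local Tate duality $H^2(G_K,\mathbb{F}_p)\cong H^0(G_K,\mu_p)^\vee=(\mu_p\cap K)^\vee$, which is $\mathbb{F}_p$ when $\mu_p\subseteq K$ and $0$ otherwise; this matches the local Euler characteristic formula $\dim H^0-\dim H^1+\dim H^2=-N$. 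When $\mu_p\subseteq K$ the module $\mathbb{F}_p$ is self-dual (since $\mathbb{F}_p\cong\mu_p$ as $G_K$-modules), so Tate duality tells us the cup product $H^1(G_K,\mathbb{F}_p)\times H^1(G_K,\mathbb{F}_p)\to H^2(G_K,\mathbb{F}_p)\cong\mathbb{F}_p$ is non-degenerate.

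Next I would transfer this to $G_K(p)$. Writing $H=\ker(G_K\twoheadrightarrow G_K(p))$, maximality of the pro-$p$ quotient forces $H^1(H,\mathbb{F}_p)^{G_K(p)}=0$, so the five-term exact sequence of the extension gives an inflation isomorphism $H^i(G_K(p),\mathbb{F}_p)\xrightarrow{\sim}H^i(G_K,\mathbb{F}_p)$ for $i\le1$ together with an injection $H^2(G_K(p),\mathbb{F}_p)\hookrightarrow H^2(G_K,\mathbb{F}_p)$, and inflation is a ring map so these are compatible with cup products. If $\mu_p\not\subseteq K$ this gives $H^2(G_K(p),\mathbb{F}_p)\hookrightarrow H^2(G_K,\mathbb{F}_p)=0$, hence $G_K(p)$ is a free pro-$p$ group, necessarily of rank $\dim H^1(G_K(p),\mathbb{F}_p)=N+1$; this is case (1). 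If $\mu_p\subseteq K$, then since the cup product on $H^1(G_K,\mathbb{F}_p)$ is non-zero and factors through the isomorphism on $H^1$ followed by the injection on $H^2$, the cup product on $H^1(G_K(p),\mathbb{F}_p)$ is non-zero, so $H^2(G_K(p),\mathbb{F}_p)$ is one-dimensional; chasing the same commutative square shows the pairing on $H^1(G_K(p),\mathbb{F}_p)$ is non-degenerate. Hence $G_K(p)$ is a Demuskin group with $d'=\dim H^1(G_K(p),\mathbb{F}_p)=N+2$ generators (note $\mu_p\subseteq K$ forces $(p-1)\mid N$, so $N$, and therefore $N+2$, is even for $p\ne2$).

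It then remains to compute the invariant. The dualizing module of $G_K(p)$ is the pro-$p$ part of the dualizing module $\mu_{p^\infty}$ of $G_K$, namely $\mu_{p^\infty}\cap K(p)$ for $K(p)$ the maximal $p$-extension; since $\mu_p\subseteq K$ the extension $K(\mu_{p^\infty})/K$ is pro-$p$, so $\mu_{p^\infty}\subseteq K(p)$ and $G_K(p)$ acts on $\mathbb{Q}_p/\mathbb{Z}_p\cong\mu_{p^\infty}$ through $\chi_{\mathrm{cyc}}$. By the choice of $r$ the image of $\chi_{\mathrm{cyc}}$ lies in $1+p^{r}\mathbb{Z}_p$ but not in $1+p^{r+1}\mathbb{Z}_p$, i.e. the orientation character vanishes mod $p^{r}$ but not mod $p^{r+1}$. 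Applying Theorem \ref{thm:Demushkin relations } to $G_K(p)$ with $d'=N+2$ and invariant $p^{r}$ yields the displayed presentation with $d=\tfrac N2+1$, finishing case (2). The step I expect to require the most care is the descent of the cohomological input from $G_K$ to $G_K(p)$ --- making the inflation injection on $H^2$ precise and checking its compatibility with cup products, so that both one-dimensionality and non-degeneracy survive; the Euler-characteristic bookkeeping, Tate duality, and the identification of the dualizing module are then routine appeals to standard local results.
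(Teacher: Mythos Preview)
Your proof sketch is correct and follows the standard argument (local class field theory for $\dim H^1$, local Tate duality for $\dim H^2$ and non-degeneracy of the cup product, transfer to the maximal pro-$p$ quotient via inflation, then identification of the orientation character with the cyclotomic character and an appeal to the Demu\v{s}kin classification). However, there is nothing to compare against: the paper does \emph{not} give a proof of this theorem. It is stated as a classical background result, immediately followed by ``For the rest of the chapter we will assume that $p\neq2$\ldots'' with no argument in between. The relevant cohomological ingredients are alluded to earlier only in the discussion surrounding the Jannsen--Wingberg theorem (Remark~\ref{rem:Condition explain}), not as a proof of the present statement. So your outline is fine, but you are supplying a proof where the paper simply quotes the result.
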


For the rest of the chapter we will, assume that $p\neq2$, and denote
by $G$ a Demushkin group, and by $\mathcal{G}_{r'}$ for $r'\geq r$
to be the discrete groups with $2d$ generators and 1 relation $x_{1}^{q}[x_{1},y_{1}]...[x_{d},y_{d}]y_{d}^{-p^{r'}}$.
When $r'=\infty$ we simply denote it by $\mathcal{G}$.

\subsubsection{Splittings of discrete Demuskin groups}
\begin{defn}
\label{def:D split and aut}

Define $Split_{D}(\mathcal{G}_{r'},\mathbb{Z})$ to be the union $Split_{D,amalg}(\mathcal{G}_{r'},\mathbb{Z})$,
the set of splittings of $\mathcal{G}_{r'}$ into $F_{2n}*_{<c>}F_{d-2n}$
where $c$ maps to $x_{1}^{q}[x_{1},y_{1}]...[x_{n},y_{n}]$ in $F_{2n}$
and to $y_{d}^{p^{r'}}[y_{d},x_{d}]..[y_{n+1},x_{n+1}]$ in $F_{d-2n}$
for some basis $x_{i},y_{i}$, and $Split_{D,HNN}(\mathcal{G}_{r'},\mathbb{Z})$,
the set of splittings into $A*_{\theta}$ where $A$ be the free group
on $n-1$ generators $x_{1},...,x_{d-1},y_{d-1},y_{d}$, and let $\theta$
be the map sending $y_{d}$ to $y_{d}^{-p^{r'}}x_{1}^{p^{r}}[x_{1},y_{1}]...[x_{d-1},y_{d-1}]y_{d}$.

We denote by $Aut_{D}(\mathcal{G}_{r'})$ the subgroup of $Aut(\mathcal{G}_{r'})$
generated by Dehn twists $T_{\alpha}$, for $\alpha\in Split_{D}(\mathcal{G}_{r'},\mathbb{Z})$,
the image of $Aut_{D}(\mathcal{G}_{r'})$ in $Out(\mathcal{G}_{r'})$
will be denoted by $Out_{D}(\mathcal{G}_{r'})$.
\end{defn}

\begin{rem}
We restrict ourselves to such splittings, since we do not know how
to classify all splittings of the discrete group, and in a sense we
don't need to for our applications.

For Demushkin groups, these are the only splittings, by \cite{wilkes2019relative}
theorem 5.18 together with \cite{wilkes2020classification} theorem
3.3, and so when we will look at Dehn twists of the pro-p case, the
only restriction we have is that the edge stabilizers are cyclic.
\end{rem}

\begin{thm}
\label{thm:HNN discrete Demuskin}Fix a basis $x_{1},y_{1}...,x_{d},y_{d}$,
let $\alpha$ be the splitting of $\mathcal{G}$ into $A*_{\theta}$
where $A$ be the free group on $d-1$ generators $x_{1},y_{1}...,x_{d}$,
and let $\theta$ be the map sending $x_{d}$ to $x_{1}^{p}[x_{1},y_{1}]...[x_{d-1},y_{d-1}]x_{d}$
then $T_{\alpha}^{k}$ is non trivial in $Out(\mathcal{G})$.
\end{thm}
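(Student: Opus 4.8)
The plan is to show that $T_\alpha^k$ fixes the conjugacy class of $x_i, y_i$ for $i < d$ and of $x_d$, while moving $y_d$ out of its conjugacy class modulo an appropriate term of a central series, so that no inner automorphism can realize $T_\alpha^k$. First I would write out $T_\alpha$ explicitly in terms of the presentation: using Definition \ref{def:general Dehn autom} for the one-edge HNN graph of groups, with the stable letter $t$ being identified with (a conjugate of) one of the generators, $T_\alpha$ acts by $x_i \mapsto x_i$, $y_i \mapsto y_i$ for $1 \le i \le d-1$, $x_d \mapsto x_d$, and $y_d \mapsto c^{\,\varepsilon} y_d$ (up to conjugation), where $c = x_1^{p}[x_1,y_1]\cdots[x_{d-1},y_{d-1}]$ is the edge group generator, which in $\mathcal{G}$ equals $[y_d, x_d]^{-1} = [x_d, y_d]$ via the defining relation. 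Hence $T_\alpha^k$ sends $y_d \mapsto c^{k} y_d$ and fixes all the other generators; the crucial point is that $c$ is a nontrivial element whose image in the abelianization, or in a suitable nilpotent quotient, is nonzero.

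Next I would argue nontriviality in $\mathrm{Out}(\mathcal{G})$ by contradiction: suppose $T_\alpha^k$ is inner, realized by conjugation by some $\rho \in \mathcal{G}$. Since $T_\alpha^k$ fixes $x_{d-1}$ (and indeed all of $x_1, y_1, \dots, x_{d-1}, y_{d-1}, x_d$), we get $x_{d-1}^\rho = x_{d-1}$, so $\rho$ lies in the centralizer of $x_{d-1}$. In a surface group $\mathcal{G} = \mathcal{G}_\infty$ (one-relator, hyperbolic, torsion-free) the centralizer of the primitive element $x_{d-1}$ is the cyclic group $\langle x_{d-1}\rangle$ — this is the standard fact that centralizers in torsion-free hyperbolic (or surface) groups are cyclic. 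Similarly $x_{d-2}^\rho = x_{d-2}$ forces $\rho \in \langle x_{d-2}\rangle$, and since $\langle x_{d-1}\rangle \cap \langle x_{d-2}\rangle = 1$ we conclude $\rho = 1$. But then $T_\alpha^k = \mathrm{id}$, forcing $c^k y_d = y_d$, i.e. $c^k = 1$; since $\mathcal{G}$ is torsion-free and $c \ne 1$ (its abelianized image involves $x_1^p$ and is visibly nontrivial in $H_1(\mathcal{G}) \cong \mathbb{Z}^{2d}$ — wait, one must be slightly careful as $c$ lies in the commutator-heavy part; better to note $c = [x_d,y_d]$ is a nontrivial element of the free-ish part, or simply that $c$ generates an infinite cyclic edge group by construction of the splitting), this is a contradiction for $k \ne 0$.

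The main obstacle, and the step I would spend the most care on, is the centralizer argument and making sure $\rho$ is forced to be trivial rather than merely to lie in $\langle x_{d-1} \rangle$: one genuinely needs two independent fixed generators whose cyclic centralizers intersect trivially, and one must check that $x_{d-1}$ and, say, $x_{d-2}$ (or $x_d$) are each primitive/non-proper-powers in $\mathcal{G}$ and generate distinct maximal cyclic subgroups — this follows from their images in $H_1(\mathcal{G}) = \mathbb{Z}^{2d}$ being distinct basis vectors, hence not commensurable. An alternative, perhaps cleaner, route that avoids hyperbolicity entirely is to mimic the argument for Definition \ref{def: neuk outer} given in the excerpt: pass to the quotient by a term $\mathcal{G}^{(2)}$ of the mod-$p$ (or rational) lower central / derived series where $c$ survives, observe that $x_{d-1}$ generating a ``large'' summand forces $\rho \in \mathcal{G}^{(1)}$ (the kernel of $\mathcal{G} \to H_1$), and then that $y_d \mathcal{G}^{(2)} = c^k y_d \mathcal{G}^{(2)}$ with $c^k \notin \mathcal{G}^{(2)}$ is impossible; I would present this homological version as the primary argument since it is self-contained, uses only that $c$ has nontrivial class in $\mathcal{G}^{(1)}/\mathcal{G}^{(2)}$, and generalizes transparently to the pro-$p$ setting needed later.
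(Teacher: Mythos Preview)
Your approach is genuinely different from the paper's, and with corrections it works, but there are two concrete slips.

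First, your formula for the Dehn twist is off. In the HNN extension $A*_\theta$ with $\theta:\langle x_d\rangle\to A$ sending $x_d\mapsto x_1^{q}[x_1,y_1]\cdots[x_{d-1},y_{d-1}]x_d$, the edge-group generator is $c=x_d$, not $x_1^{q}[x_1,y_1]\cdots[x_{d-1},y_{d-1}]$. By Definition~\ref{def:Dehn autom} (and exactly as in Example~\ref{exa:neuk outer is dehn}) the twist sends the stable letter $y_d\mapsto x_d y_d$, so $T_\alpha^k(y_d)=x_d^k y_d$. Second, $\mathcal{G}=\mathcal{G}_\infty$ is \emph{not} a surface group: its relator is $x_1^{q}[x_1,y_1]\cdots[x_d,y_d]$ with $q=p^r\ge p>1$, so your appeal to ``centralizers in surface groups are cyclic'' needs replacing (e.g.\ by showing $\mathcal{G}$ is torsion-free hyperbolic, which is true but not free).

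Once you use the correct $c=x_d$, your homological alternative collapses to something much shorter than you indicate: the relator abelianizes to $qx_1=0$, so $x_d$ has infinite order in $H_1(\mathcal{G})\cong(\mathbb{Z}/q)\oplus\mathbb{Z}^{2d-1}$; then $T_\alpha^k$ acts on $H_1$ by $y_d\mapsto y_d+kx_d$ and fixes the rest, which is nontrivial for $k\ne0$, while inner automorphisms act trivially on $H_1$. Done---no centralizers or lower central series needed.

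By contrast, the paper argues geometrically: it produces a second splitting $\beta$, checks via translation lengths that $T_\alpha^k(\beta)$ intersects $\beta$, and invokes Lemma~\ref{lem:Conj intersection} (conjugation preserves non-intersection) to rule out innerness. Your route is more elementary \emph{here}, but note why the paper prefers its method: in the companion Theorem~\ref{thm:amalgam discrete demuskin} the edge generator $c$ is a product of commutators in each vertex group, so the Dehn twist acts trivially on $H_1$ and your $H_1$ shortcut fails outright; the splitting-intersection machinery handles both cases uniformly and is exactly what is reused later for the pro-$p$ statements (Lemmas~\ref{lem:p-cong sep} and \ref{lem:p-cong sep HNN}).
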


\begin{proof}
Let $\beta$ be the splitting $B=<x_{1},y_{1}...x_{d-1},y_{d-1},y_{d}>$
and $\theta'$ we have $B*_{\theta'}=\{x_{1},y_{1}...x_{d-1},y_{d-1},y_{d},t|tx_{n}t^{-1}=x_{1}^{p}[x_{1},y_{1}]...[x_{d-1},y_{d-1}]y_{d}\}$.

Looking at the translation length of the action of $x_{d}$ on the
tree of $\beta$ we see that it is positive, since $x_{d}$ is not
conjugate to any element of $B$, hence the splittings $\alpha$ and
$\beta$ intersect. 

Again we have that the translation length of the action of $T_{\alpha}^{k}(y_{d})$
on the tree of $\beta$ is positive, since $y_{d}x_{d}^{k}$ is not
conjugate to any element of $B$ and so the splittings $T_{\alpha}^{k}(\beta)$
and $\beta$ intersect.

Now assume that $T_{\alpha}^{k}$ was inner, i.e. acted by conjugation.

Since conjugation does not change whether two splittings intersect
or not (lemma \ref{lem:Conj intersection}), and any splitting does
not intersect itself, we get a contradiction.

Hence $T_{\alpha}^{k}$ is outer.
\end{proof}
\begin{rem}
In the above, when $r=\infty$ we get the following picture for $\alpha$
and $\beta$:\begin{center} 
\begin{tikzpicture}
\filldraw[fill=gray] (0,1) to[out=30,in=150] (2,1) to[out=-30,in=210] (3,1) to[out=30,in=150] (5,1) to[out=-30,in=30] (5,-1) to[out=210,in=-30] (3,-1) to[out=150,in=30] (2,-1) to[out=210,in=-30] (0,-1) to[out=150,in=-150] (0,1);
\draw[smooth] (0.4,0.1) .. controls (0.8,-0.25) and (1.2,-0.25) .. (1.6,0.1);
\filldraw[fill=white][smooth] (0.5,0.02) .. controls (0.8,-0.25) and (1.2,-0.25) .. (1.5,0.02);
\filldraw[fill=white][smooth] (0.5,0.015) .. controls (0.8,0.2) and (1.2,0.2) .. (1.5,0.015);
\draw[smooth] (3.4,0.1) .. controls (3.8,-0.25) and (4.2,-0.25) .. (4.6,0.1);
\filldraw[fill=white][smooth] (3.5,0.02) .. controls (3.8,-0.25) and (4.2,-0.25) .. (4.5,0.02); 
\filldraw[fill=white][smooth] (3.5,0.015) .. controls (3.8,0.2) and (4.2,0.2) .. (4.5,0.015); 
\draw [color=green](1,0) circle (1 and 0.6);
\draw [color=orange](0.5,0.01) arc(180:0: -0.5 and 0.3);
\draw [color=orange][dashed](0.5,-0.01) arc(180:360: -0.5 and 0.3);
\node at (1,1) {$ \color{green} \alpha $}; 
\node at (0,0.5) {$ \color{orange} \beta $}; 
\end{tikzpicture} 
\end{center}
\end{rem}

\begin{thm}
\label{thm:amalgam discrete demuskin}Let $\alpha$ be the splitting
of $\mathcal{G}_{r'}$ into $F_{2n}*_{<c>}F_{2d-2n}$ where $c$ maps
to $x_{1}^{q}[x_{1},y_{1}]...[x_{n},y_{n}]$ in $F_{2n}$ and to $y_{d}^{p^{r'}}[y_{d},x_{d}]..[y_{n+1},x_{n+1}]$
in $F_{2d-2n}$ then $T_{\alpha}^{k}$ is non trivial in $Out(\mathcal{G}_{r'})$.
\end{thm}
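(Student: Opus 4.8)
The plan is to mimic the proof of Theorem~\ref{thm:HNN discrete Demuskin} almost verbatim, replacing the HNN bookkeeping with the amalgamated-product bookkeeping, and to use the translation-length criterion for intersection of splittings recalled in Subsection~2.2. Concretely, I would first produce an auxiliary splitting $\beta$ of $\mathcal{G}_{r'}$ that provably intersects $\alpha$, then show that $T_\alpha^k(\beta)$ also intersects $\beta$ for every integer $k\neq 0$, and finally invoke Lemma~\ref{lem:Conj intersection} to conclude that $T_\alpha^k$ cannot be inner: an inner automorphism preserves non-intersection, but a splitting never intersects itself, so if $T_\alpha^k$ were inner then $\beta=T_\alpha^k(\beta)$ up to conjugacy, contradicting the fact that $\beta$ and $T_\alpha^k(\beta)$ intersect.

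For the choice of $\beta$, recall that $\alpha$ is the amalgam $F_{2n}*_{\langle c\rangle}F_{2d-2n}$ with $c\mapsto x_1^{q}[x_1,y_1]\cdots[x_n,y_n]$ on the left and $c\mapsto y_d^{p^{r'}}[y_d,x_d]\cdots[y_{n+1},x_{n+1}]$ on the right, and the associated Dehn twist $T_\alpha$ acts by $T_\alpha(a)=a$ for $a$ in the left free factor and $T_\alpha(b)=\varphi(c)\,b\,\varphi(c)^{-1}$ for $b$ in the right free factor (Definition~\ref{def:Dehn autom}). I would take $\beta$ to be a nearby $\mathbb{Z}$-splitting whose edge-group generator $c'$ is hyperbolic with respect to the Bass--Serre tree $T$ of $\alpha$ --- for instance the splitting obtained by regrouping the generators so that the amalgamated cyclic subgroup is generated by an element, say $x_1^q[x_1,y_1]\cdots[x_{n-1},y_{n-1}]$ or $[x_{n},y_{n}]$, that is visibly not conjugate into either of the two free factors $F_{2n}$, $F_{2d-2n}$ of $\alpha$. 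By the criterion recalled before Definition~\ref{def:Dehn autom} (and \cite{scott2000splittings}, via \cite{rips1995cyclic}), this non-conjugacy is exactly the statement that $\beta$ is hyperbolic with respect to $\alpha$, hence $l_T(c')>0$, hence $\alpha$ and $\beta$ intersect.

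The crux of the argument is then to check that $T_\alpha^k(c')$ --- the edge generator of the splitting $T_\alpha^k(\beta)$ --- is still not conjugate into a free factor of $\beta$ (equivalently, acts hyperbolically on the Bass--Serre tree of $\beta$), for every $k\neq 0$. This is where I would spend real effort: one has to track how iterating $T_\alpha$ inserts more and more copies of the word $\varphi(c)^{\pm 1}$ into the normal form of $c'$ (for those syllables of $c'$ lying in the right free factor of $\alpha$), and argue that the resulting word, read in $\mathcal{G}_{r'}$, remains hyperbolic for $\beta$. Concretely I expect $T_\alpha^k(c')$ to have a cyclically reduced form in the graph-of-groups normal form for $\beta$ of length growing linearly in $|k|$, so that its translation length on the tree of $\beta$ is positive; alternatively one can compare $p$-central series depths as in the proof of the explicit Neukirch--Schmidt--Wingberg automorphism being outer (the argument reproduced after Definition~\ref{def: neuk outer}), using that the relevant images stay distinct modulo $V^2$. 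The main obstacle is precisely this last verification: unlike in Theorem~\ref{thm:HNN discrete Demuskin}, where the stable letter $t$ makes the hyperbolicity of $T_\alpha^k(y_d)=y_d x_d^k$ transparent, in the amalgamated case the element $c'$ lies inside $\mathcal{G}_{r'}$ itself and one must rule out "accidental" conjugations into a factor after twisting; I would handle this by choosing $c'$ so that its normal form with respect to $\alpha$ has the right syllables genuinely in the right-hand factor $F_{2d-2n}$, so that twisting strictly lengthens it, and then translating this length statement into the non-conjugacy-into-a-factor statement for $\beta$. Once that is in hand, Lemma~\ref{lem:Conj intersection} finishes the proof exactly as in Theorem~\ref{thm:HNN discrete Demuskin}.
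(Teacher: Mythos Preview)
Your overall architecture matches the paper's exactly: pick an auxiliary splitting $\beta$, show $\alpha$ and $\beta$ intersect via the translation-length criterion, show $T_\alpha^k(\beta)$ and $\beta$ intersect, and conclude by Lemma~\ref{lem:Conj intersection}. The difficulty, as you correctly identify, is entirely in the choice of $\beta$ and the verification that $T_\alpha^k(\beta)$ still intersects $\beta$ --- and this is where your proposal has a real gap.

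Your candidate edge generators $x_1^q[x_1,y_1]\cdots[x_{n-1},y_{n-1}]$ and $[x_n,y_n]$ both lie \emph{entirely inside} the left factor $F_{2n}$ of $\alpha$, so they are elliptic, not hyperbolic, on the Bass--Serre tree of $\alpha$. Worse, the resulting amalgam $\beta$ (cutting the chain of commutators one slot earlier) is visibly \emph{compatible} with $\alpha$: the two splittings have an obvious common refinement, so $i(\alpha,\beta)=0$ and the argument never gets off the ground. To make the strategy work the edge generator of $\beta$ must genuinely straddle both factors of $\alpha$. The paper achieves this not with another amalgam but with an HNN extension: it sets $b=y_n^{-1}x_{n+1}$, takes $B=\langle x_1,y_1,\dots,x_n,\,y_{n+1},\,x_{n+2},y_{n+2},\dots,x_d,y_d,\,b\rangle$, and lets the stable letter be $t=x_{n+1}$ with associated cyclic subgroup generated by $b^{-1}x_n^{-1}b\,y_{n+1}$. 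Rewritten in the original generators this is $x_{n+1}^{-1}y_n x_n^{-1} y_n^{-1} x_{n+1} y_{n+1}$, which manifestly alternates between the two factors of $\alpha$ (the exponent sums of $x_n$ and of $y_{n+1}$ are nonzero), giving positive translation length.

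For the second step the paper does exactly what you sketch --- track how conjugation by $\gamma^k$, $\gamma=x_1^q[x_1,y_1]\cdots[x_n,y_n]$, on the right-factor syllables of $c'$ produces occurrences of $y_n$ buried in the middle of the word, far from any $x_{n+1}^{\pm1}$, so that $T_\alpha^k(c')$ cannot be conjugated into $B$ --- and it also records the alternative route via Lemma~\ref{lem:no inters commute} and Lemma~\ref{lem:cong formula}: if $T_\alpha^k(\beta)$ and $\beta$ did not intersect then $T_{T_\alpha^k(\beta)}=T_\alpha^k T_\beta T_\alpha^{-k}$ would commute with $T_\beta$, which one refutes by a direct word computation on $x_{n+1}$. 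So your plan is the right one, but you must replace your $\beta$ by one whose edge element actually crosses $\alpha$; the paper's HNN choice is tailored precisely so that both intersection checks become concrete reduced-word arguments in the free factors.
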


\begin{proof}
Similarly to theorem 35:

Let $\beta$ be the splitting $B=<x_{1},y_{1},...,x_{n},y_{n+1},x_{n+2},y_{n+2},..,x_{d},y_{d},b>$
where $b=y_{n}^{-1}x_{n+1}$ with gluing map 
\[
\theta'(b^{-1}x_{n}^{-1}by_{n+1})=x_{n}^{-1}[y_{n-1},x_{n-1}]...[y_{1},x_{1}]x_{1}^{q}y_{d}^{p^{r'}}[y_{d},x_{d}]...[y_{n+2},x_{n+2}]y_{n+1}
\]
 and so we have 

\[
B*_{\theta'}=\{B,t|\begin{array}{c}
tb^{-1}x_{n}^{-1}by_{n+1}t^{-1}=\\
x_{n}^{-1}[y_{n-1},x_{n-1}]...[y_{1},x_{1}]x_{1}^{q}y_{d}^{p^{r'}}[y_{d},x_{d}]...[y_{n+2},x_{n+2}]y_{n+1}
\end{array}\}
\]
where $t$ would be $x_{n+1}$ in $\mathcal{G}_{r'}$.

Looking at the translation length of the action of $x_{n+1}^{-1}y_{n}x_{n}^{-1}y_{n}^{-1}x_{n+1}y_{n+1}$
on the tree of $\alpha$ we see that it is positive, since it is not
conjugate to any element of $<x_{1},...,y_{n}>$ or of $<x_{n+1},...,y_{d}>$
(due to $x_{n}$ and $y_{n+1}$ both having sum of powers non zero),
hence the splittings $\alpha$ and $\beta$ intersect. 

Again we have that the translation length of the action of $T_{\alpha}^{k}(b^{-1}x_{n}^{-1}by_{n+1})$
on the tree of $\beta$ is positive, since $x_{n+1}^{-1}\gamma^{k}y_{n}x_{n}^{-1}y_{n}^{-1}\gamma^{-k}x_{n+1}y_{n+1}$
is not conjugate to any element of $B$, where $\gamma=x_{1}^{q}[x_{1},y_{1}]...[x_{n},y_{n}]$
(due to the appearances of $y_{n}$ ``away from $x_{n+1}$'' in
the middle of the word, coming from $\gamma$). 

For example when $k=1$ we have: 
\[
x_{n+1}^{-1}x_{1}^{q}[x_{1},y_{1}]..[x_{n-1},y_{n-1}][x_{n},y_{n}]x_{n}^{-1}[y_{n-1},x_{n-1}]...[y_{1},x_{1}]x_{1}^{-q}x_{n+1}y_{n+1}
\]
 which is clearly not conjugate to elements of $B$, and so the splittings
$T_{\alpha}^{k}(\beta)$ and $\beta$ intersect.

Note that one can also see that $T_{\alpha}^{k}(\beta)$ and $\beta$
intersect since if they wouldn't then we would get that $T_{T_{\alpha}^{k}(\beta)}$
and $T_{\beta}$ commute (by lemma \ref{lem:no inters commute}).

But by lemma \ref{lem:cong formula} we have $T_{T_{\alpha}^{k}(\beta)}=T_{\alpha}^{k}T_{\beta}T_{\alpha}^{-k}$,
and a direct computation (say using Dehn's algorithm for the word
problem) would show that $T_{\alpha}^{k}T_{\beta}T_{\alpha}^{-k}T_{\beta}(x_{n+1})\neq T_{\beta}T_{\alpha}^{k}T_{\beta}T_{\alpha}^{-k}(x_{n+1})$.

Now assume that $T_{\alpha}^{k}$ was inner, i.e. acted by conjugation.

Since conjugation does not change whether two splittings intersect
or not (lemma \ref{lem:Conj intersection}), and any splitting does
not intersect itself, we get a contradiction.

Hence $T_{\alpha}^{k}$ is outer.
\end{proof}
\begin{rem}
Note that for having amalgamated free products we need $d>1$, similar
to how we need the genus of a surface to be great than 1 to have separating
curves.
\end{rem}

\begin{rem}
In the above, when $r=\infty$ we get something almost like the following
picture for $\alpha$ and $\beta$ (only almost like it, since $\beta$
got some extra ``twisting'' in its presentation to make it easier
to work with):

\begin{center} 
\begin{tikzpicture}
\filldraw[fill=gray] (0,1) to[out=30,in=150] (2,1) to[out=-30,in=210] (3,1) to[out=30,in=150] (5,1) to[out=-30,in=30] (5,-1) to[out=210,in=-30] (3,-1) to[out=150,in=30] (2,-1) to[out=210,in=-30] (0,-1) to[out=150,in=-150] (0,1);
\draw[smooth] (0.4,0.1) .. controls (0.8,-0.25) and (1.2,-0.25) .. (1.6,0.1);
\filldraw[fill=white][smooth] (0.5,0.02) .. controls (0.8,-0.25) and (1.2,-0.25) .. (1.5,0.02);
\filldraw[fill=white][smooth] (0.5,0.015) .. controls (0.8,0.2) and (1.2,0.2) .. (1.5,0.015);
\draw[smooth] (3.4,0.1) .. controls (3.8,-0.25) and (4.2,-0.25) .. (4.6,0.1);
\filldraw[fill=white][smooth] (3.5,0.02) .. controls (3.8,-0.25) and (4.2,-0.25) .. (4.5,0.02); 
\filldraw[fill=white][smooth] (3.5,0.015) .. controls (3.8,0.2) and (4.2,0.2) .. (4.5,0.015); 
\draw [color=red](1.5,0.01) arc(0:180: -1 and 0.17);
\draw [color=red][dashed](1.5,-0.01) arc(0:-180: -1 and 0.17);
\draw [color=blue](2.5,0.85) arc(270:90:0.3 and -0.85);
\draw[color=blue][dashed] (2.5,0.85) arc(270:450:0.3 and -0.85);
\node at (2.5,1) {$ \color{blue} \alpha $}; 
\node at (2,0.5) {$ \color{red} \beta $}; 
\end{tikzpicture} 
\end{center}
\end{rem}

\subsection{Pro-p rigidity for Demuskin type relations}

\subsubsection{p-efficient splittings}

We now describe a property which will allow us to move from showing
properties of a discrete group $\mathcal{G}$ to a Demuskin group
$G$.
\begin{defn}
A graph of discrete groups $\mathscr{G}=(X,F)$ is p-efficient if
$\pi_{1}(\mathscr{G})$ is residually p, each group $F_{x}$ is closed
in the pro-p topology on $\pi_{1}(\mathscr{G})$, and $\pi_{1}(\mathscr{G})$
induces the full pro-p topology on each $F_{x}$. 
\end{defn}

\begin{rem}
Let $(X,F)$ be a graph of discrete groups, e.g. if $X$ is one edge
this is either $F_{1}*_{L}F_{2}$ or $F_{1}*_{L}$. If it is p-efficient,
then we would get that $(X,\hat{F})$ is an injective graph (i.e.
the vertex groups inject to the fundamental group) of pro-p groups
and the pro-p completion of $\pi_{1}(X,F)$ is $\pi_{1}(X,\hat{F})$,
where by $\hat{}$ we mean pro-p completions, and the second $\pi_{1}$
is the fundamental group in the category of graphs of pro-p groups.

In the case of one edge graphs this would mean $\widehat{F_{1}*_{L}F_{2}}\cong\hat{F}_{1}*_{\hat{L}}\hat{F}_{2}$
and $\widehat{F_{1}*_{L}}\cong\hat{F}_{1}*_{\hat{L}}$, where the
amalgamated free product/HNN extension, on the left side of each isomorphism,
is in the category of discrete groups, and the right side is in the
category of pro-p groups.

In other words, if a graph of groups is p-efficient then taking fundamental
groups commutes taking pro-p completions, and the discrete fundamental
group injects into the pro-p one.
\end{rem}

\begin{prop}
Every group $\mathcal{G}_{r'}$ is residually p.
\end{prop}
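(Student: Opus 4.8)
The plan is to exhibit, for each $r'\ge r$, an explicit descending chain of normal subgroups of $\mathcal{G}_{r'}$ with $p$-power quotients whose intersection is trivial; equivalently, to show that the pro-$p$ topology on $\mathcal{G}_{r'}$ is Hausdorff. The natural candidate is the lower $p$-central series $\mathcal{G}_{r'}=\gamma_1\supseteq\gamma_2\supseteq\cdots$ defined by $\gamma_{i+1}=\gamma_i^p[\gamma_i,\mathcal{G}_{r'}]$; each quotient $\mathcal{G}_{r'}/\gamma_i$ is a finite $p$-group, so it suffices to prove $\bigcap_i\gamma_i=\{1\}$.

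First I would identify the pro-$p$ completion of $\mathcal{G}_{r'}$. By Theorem~\ref{thm:Demushkin relations } the pro-$p$ group presented on $x_1,y_1,\dots,x_d,y_d$ with the single relator $x_1^{q}[x_1,y_1]\cdots[x_d,y_d]$ is a Demushkin group $G$ (here $q=p^{r}$, using $r'\ge r$ so the extra factor $y_d^{-p^{r'}}$ is absorbed correctly after a change of basis $y_d\mapsto y_d$ composed with the automorphism folding $y_d^{-p^{r'}}$ into the relator — one checks the discrete relator $x_1^{q}[x_1,y_1]\cdots[x_d,y_d]y_d^{-p^{r'}}$ becomes, in the pro-$p$ completion, conjugate to a Demushkin relator of the same two invariants). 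Thus the pro-$p$ completion map $\mathcal{G}_{r'}\to G$ has image dense in $G$, and in fact, since $G$ is a one-relator pro-$p$ group whose defining relator is the image of the one-relator presentation of $\mathcal{G}_{r'}$, this map is surjective onto $G$ and $G=\widehat{\mathcal{G}_{r'}}^{\,p}$. The point then is exactly that the kernel of $\mathcal{G}_{r'}\to\widehat{\mathcal{G}_{r'}}^{\,p}$ is trivial, i.e.\ residual $p$-ness.

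The main obstacle — and the heart of the argument — is showing that the discrete one-relator group $\mathcal{G}_{r'}$ actually injects into its pro-$p$ completion, rather than merely surjecting onto a Demushkin group. For this I would invoke the theory of one-relator groups: $\mathcal{G}_{r'}$ is a one-relator group whose relator $x_1^{q}[x_1,y_1]\cdots[x_d,y_d]y_d^{-p^{r'}}$, after abelianization, is a proper power only in controlled ways, and such groups are known to be residually $p$ by the work of Labute on mild presentations / Magnus–Lazard methods, or alternatively one can use that surface groups and their "Demushkin-type" relatives admit faithful actions witnessing residual $p$-ness; another route is to note $\mathcal{G}_{r'}$ retracts onto, or is commensurable with, a surface group augmented by a single torsion-free central-type modification, and surface groups are residually $p$ for every $p$. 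Concretely, I expect the cleanest path is: (i) show the relator lies in the $p$-Zassenhaus filtration deep enough (it is $x_1^{q}\cdot(\text{products of commutators})\cdot y_d^{-p^{r'}}$, so its leading term in the associated graded restricted Lie algebra over $\mathbb{F}_p$ is nonzero and not a $p$-th power obstruction), whence the presentation is "mild" in Labute's sense; (ii) apply Labute's theorem that a mild pro-$p$ presentation has associated graded algebra a quadratic (or explicitly computable) restricted Lie algebra with the expected Hilbert series, forcing the lower $p$-central series of $\mathcal{G}_{r'}$ to have the same ranks as the free case below the relator degree and to be separating; (iii) conclude $\bigcap_i\gamma_i(\mathcal{G}_{r'})=\{1\}$, i.e.\ $\mathcal{G}_{r'}$ is residually $p$. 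I would expect the verification of mildness — that the single relation does not create unexpected relations in the restricted Lie algebra — to be the step requiring genuine care, especially handling the interaction of the $x_1^{p^r}$ term and the $y_d^{-p^{r'}}$ term when $r'=r$.
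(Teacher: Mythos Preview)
The paper's proof is a one-line citation to Gildenhuys' 1975 paper on one-relator groups that are residually of prime power order; no argument is given beyond that reference.

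Your sketch heads in a reasonable direction but never closes the gap it correctly identifies. The first half --- computing the pro-$p$ completion and observing it is Demushkin --- is irrelevant, as you yourself note: knowing the target is pleasant tells you nothing about injectivity of the completion map. The second half proposes to fix this via Labute's theory of mild presentations, but that theory lives in the pro-$p$ category: mildness is a hypothesis on a pro-$p$ presentation and its payoff is information about the associated graded restricted Lie algebra \emph{of the pro-$p$ group}. It does not, by itself, compare the discrete lower $p$-central quotients $\gamma_i(\mathcal{G}_{r'})/\gamma_{i+1}(\mathcal{G}_{r'})$ with the pro-$p$ ones, which is exactly the comparison you need to deduce $\bigcap_i\gamma_i(\mathcal{G}_{r'})=\{1\}$. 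Your step (ii) $\Rightarrow$ (iii) is therefore not justified as written.

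The result you actually want is the discrete theorem: Gildenhuys (building on Labute's earlier discrete work) gives a criterion on the position of a single relator $w$ in the lower $p$-central series of the free group $F$ --- roughly, that the image of $w$ in the first nonvanishing graded piece is not a proper $p$-th power --- guaranteeing that $F/\langle\!\langle w\rangle\!\rangle$ is residually $p$. Verifying that criterion for $w=x_1^{p^r}[x_1,y_1]\cdots[x_d,y_d]y_d^{-p^{r'}}$ is straightforward (the commutator block already gives a nonzero, non-$p$-th-power class in degree $2$), and this is what the paper's citation is invoking. So your instinct to look at the Zassenhaus filtration is right; you just need the discrete one-relator machinery rather than the pro-$p$ mild-presentation machinery.
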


\begin{proof}
by \cite{gildenhuys1975one}.
\end{proof}
\begin{prop}
\label{prop:p torsion }Let $\alpha\in Split_{D}(\mathcal{G}_{r'},\mathbb{Z})$.
Then for any $s$, there is a p-group quotient of $\mathcal{G}$ such
that the image of $c$, a generator of $\mathcal{G}_{e}$, is $p^{s}$
torsion. In particular, $\mathcal{G}$ induces the full pro p topology
on $\mathcal{G}_{e}$.
\end{prop}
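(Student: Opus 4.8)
The plan is to construct, for each $s$, an explicit finite $p$-group quotient $\mathcal{G}_{r'} \twoheadrightarrow Q$ in which the image of the edge generator $c$ has order exactly $p^s$ (or at least $p^s$-torsion, which is all we need). Since the pro-$p$ topology on $\mathcal{G}_{r'}$ is generated by the kernels of all maps to finite $p$-groups, producing such quotients for arbitrarily large $s$ shows that the cyclic subgroup $\langle c \rangle$ inherits its full pro-$p$ topology (equivalently, $\overline{\langle c\rangle} \cong \mathbb{Z}_p$ in the pro-$p$ completion). First I would treat the two cases of Definition~\ref{def:D split and aut} separately but in parallel. In the HNN case, $c$ maps to $y_d^{-p^{r'}}x_1^{p^r}[x_1,y_1]\cdots[x_{d-1},y_{d-1}]y_d$ inside the free subgroup $A$; in the amalgamated case $c$ maps to $x_1^q[x_1,y_1]\cdots[x_n,y_n]$ inside the free factor $F_{2n}$. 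In both cases the key point is that $c$ is a primitive-ish element of a free group, or at worst lies in a free group where its image in the abelianization (or in a suitable nilpotent quotient) can be made to have large $p$-power order.

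The main step is the following: for the word $w = x_1^q[x_1,y_1]\cdots[x_n,y_n]$ (and similarly for the HNN word), I would exhibit a homomorphism from the ambient free group $F$ to a finite $p$-group $P$ under which $w \mapsto$ an element of order $p^s$, and then check this extends to $\mathcal{G}_{r'}$. The cleanest route: map the relevant free generators so that $w$ lands in a chosen central $\mathbb{Z}/p^s$ inside $P$ while the rest of the defining relator of $\mathcal{G}_{r'}$ is killed. Concretely, one can use a quotient of a free nilpotent pro-$p$ group of class $2$: send $x_1 \mapsto u$, $y_1 \mapsto v$ with $[u,v]$ generating a central cyclic group of order $p^s$, and send all other generators into the center in a way that makes the full relator $x_1^q[x_1,y_1]\cdots[x_d,y_d]y_d^{-p^{r'}}$ vanish — this is possible because once we are in a class-$2$ quotient, the relator becomes $x_1^q \cdot \prod [x_i,y_i] \cdot y_d^{-p^{r'}}$, a product of a $q$-th power, commutators, and a $p^{r'}$-th power, all of which we control independently. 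We then arrange that $w$ itself maps to a generator of the central $\mathbb{Z}/p^s$. A parallel computation handles the HNN generator, where the extra stable letter $t$ can be sent to $1$.

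The hard part — and the only genuinely delicate point — is simultaneously (a) making the image of $c$ have order $p^s$ and (b) keeping the Demuskin relator satisfied, since these pull in opposite directions: the relator ties $x_1^q$ and $y_d^{-p^{r'}}$ to the product of commutators, and naively forcing a commutator to be nontrivial of high order may obstruct killing the relator. I expect this is resolved by choosing $s \le $ (something comparable to) $r$ or $r'$ and using the freedom in the remaining generators $x_2,\dots$ as "slack" — i.e. routing the required compensating $p$-power into a generator that does not appear in $c$. If one wants unbounded $s$ with bounded $r$, the fix is to pass to a nilpotent quotient of higher class, or to iterate: take quotients of the form $\mathcal{G}_{r'} \to \mathcal{G}_{r'}/\gamma_k(\mathcal{G}_{r'})$ and use that $c$, being a nontrivial element of the free pro-$p$ group $\hat A$ (respectively $\widehat{F_{2n}}$), has trivial image in only finitely many lower central quotients, so its order in $\mathcal{G}_{r'}/(\text{appropriate }p\text{-power, lower-central term})$ grows without bound. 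Finally, once the quotients are produced, the statement "$\mathcal{G}$ induces the full pro-$p$ topology on $\mathcal{G}_e$" follows formally: any finite-index (in the pro-$p$ sense) subgroup of $\langle c\rangle \cong \mathbb{Z}$ contains $p^s\mathbb{Z}$ for some $s$, and the quotient above realizes $\langle c \rangle \cap \ker = p^s\mathbb{Z}$, so the subspace topology on $\langle c\rangle$ is exactly the $p$-adic one.
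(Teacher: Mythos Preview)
Your main approach---building class-$2$ nilpotent (Heisenberg-type) quotients in which a single commutator $[x_i,y_i]$ lands on a central element of order $p^s$---is exactly what the paper does. Two points where the proposal diverges or falls short:

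\emph{The HNN case is much easier than you treat it.} For $\alpha\in Split_{D,HNN}(\mathcal G_{r'},\mathbb Z)$ the edge generator $c$ is (under one of its two inclusions) the free generator $y_d$ itself, not the complicated word $\theta(y_d)$ you wrote; in particular $c$ is primitive in $\mathcal G_{r'}^{\mathrm{ab}}$ and a single abelian quotient $\mathcal G_{r'}\twoheadrightarrow\mathbb Z/p^s$ sending $y_d\mapsto 1$ already suffices. No nilpotent machinery is needed here.

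\emph{The ``hard part'' you flag is the entire content of the amalgam case, and your resolution is not yet a proof.} The paper handles the tension you identify by an explicit three-way case split on how $s$ compares to $r$ and $r'$: for $s\le\min(r,r')$ both free factors map to $\mathcal H(\mathbb Z/p^s)$ with $c\mapsto[u,v]$; for $r<s\le r'$ one side goes to $\mathbb Z/p^{r+s}$ and the other to $\mathcal H(\mathbb Z/p^{r+s})$ with $x_{n+1}$ sent to an element of order only $p^s$; for $s>r'$ one works abelianly in $\mathbb Z/p^{r'+s}$, adjusting the image of $x_1$. Your ``pass to higher nilpotency class'' is plausible but not carried out, and your fallback---arguing via lower central quotients from the fact that $c$ is nontrivial in $\widehat A$ or $\widehat{F_{2n}}$---has a gap: nontriviality of $c$ in $p$-quotients of the \emph{vertex} group does not by itself yield nontriviality in $p$-group quotients of $\mathcal G_{r'}$, unless you already know that $\mathcal G_{r'}$ induces the full pro-$p$ topology on the vertex group, which is proved downstream of this proposition. (A clean non-circular rescue does exist: $\mathcal G_{r'}$ is residually $p$ and $c$ has infinite order in $\mathcal G_{r'}$ since it lies in an embedded free vertex group, so each $c^{p^{s-1}}$ survives in some finite $p$-quotient. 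But that is not the argument you wrote.)
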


\begin{proof}
If $\alpha$ is an HNN extension, then $c$ is a non trivial primitive
element in $\mathcal{G}^{ab}$ and so any map $\mathcal{G\twoheadrightarrow\mathbb{Z}}/p^{s}$,
sending $c$ to $1$ will do.

Otherwise, we have a basis for which $\alpha=F_{2n}*_{<c>}F_{2d-2n}$
where $c$ maps to $x_{1}^{q}[x_{1},y_{1}]...[x_{n},y_{n}]$ in $F_{2n}$
and to $[x_{n+1},y_{n+1}]...[x_{d},y_{d}]y_{d}^{-p^{r'}}$ in $F_{2d-2n}$.

If $r',r\geq s$, we define maps from $F_{2n}$, $F_{2d-2n}$ to the
Heisenberg group with coefficient in $\mathbb{Z}/p^{s}$
\[
\mathcal{H}(\mathbb{Z}/p^{r})=\{\left(\begin{array}{ccc}
1 & x & y\\
0 & 1 & z\\
0 & 0 & 1
\end{array}\right):x,y,z\in\mathbb{Z}/p^{s}\}
\]
 sending $x_{n+1},x_{1}$ to $\left(\begin{array}{ccc}
1 & 1 & 0\\
0 & 1 & 0\\
0 & 0 & 1
\end{array}\right)$, $y_{n+1},y_{1}$ to $\left(\begin{array}{ccc}
1 & 0 & 0\\
0 & 1 & 1\\
0 & 0 & 1
\end{array}\right)$ and the rest of $x_{i},y_{i}$ to the identity. Thus we have the
image of $c$ will be which is of order exactly $p^{s}$. The map
$\mathcal{G}\rightarrow\mathcal{H}(\mathbb{Z}/p^{s})*_{\mathbb{Z}/p^{s}}\mathcal{H}(\mathbb{Z}/p^{s})\rightarrow\mathcal{H}(\mathbb{Z}/p^{s})$
where the final map identifies the two copies of $\mathcal{H}(\mathbb{Z}/p^{s})$
is the required quotient.

If $r<s\leq r'$, then we send $F_{2n}$ to $\mathbb{Z}/p^{r+s}$,
sending all generators to the identity, hence $c$ to an element of
order $p^{s}$, and we send $F_{2d-2n}$ to $\mathcal{H}(\mathbb{Z}/p^{r+s})$,
but now send $x_{n+1}$ to $\left(\begin{array}{ccc}
1 & p^{r} & 0\\
0 & 1 & 0\\
0 & 0 & 1
\end{array}\right)$.

The image of the map $\mathcal{G}\rightarrow\mathcal{H}(\mathbb{Z}/p^{r+s})*_{\mathbb{Z}/p^{r+s}}\mathbb{Z}/p^{r+s}=\mathcal{H}(\mathbb{Z}/p^{r+s})$
is the required quotient.

If $r'<s$ then we send $F_{2n}$ and $F_{2d-2n}$ to $\mathbb{Z}/p^{r'+s}$,
sending all generators to the identity, except $x_{1}$ which is sent
to $p^{r'-r}$ hence $c$ to an element of order $p^{s}$, which gives
us the required quotient of $\mathcal{G}$.
\end{proof}
\begin{defn}
Let $P$ be a finite p-group. A chief series for $P$ is a sequence
$1\leq P_{n}\leq P_{n-1}\leq...\leq P_{2}\leq P_{1}=P$ of normal
subgroups of $P$ such that each quotient $P_{i}/P_{i+1}$ is either
trivial or isomorphic to $Z/pZ$. 
\end{defn}

We first need the following theorem of Higman \cite{higman1964amalgams}:
\begin{thm}
\label{thm: Higman} Let $A$ and $B$ be finite $p$-groups with
common subgroup $A\cap B=U$. Then $A*_{U}B$ is residually $p$ if
and only if there are chief series $\{A_{i}\}$ and $\{B_{i}\}$ of
$A$ and $B$ respectively such that $\{U\cap A_{i}\}=\{U\cap B_{i}\}$.
In particular, $A*_{U}B$ is residually $p$ when $U$ is cyclic. 
\end{thm}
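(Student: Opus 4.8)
The plan is to prove the equivalence by routing it through the more geometric assertion that $G:=A*_U B$ is residually $p$ if and only if $A$ and $B$ can be embedded simultaneously into a single finite $p$-group $P$ with $A\cap B=U$ inside $P$, and that this intermediate condition is itself equivalent to the chief series condition. I will freely use the normal form theorem for amalgamated products (so that $A,B$ embed in $G$ and $A\cap B=U$ in $G$), and the standard fact that a group is residually $p$ exactly when the intersection of its normal subgroups of $p$-power index is trivial, equivalently when it embeds into its pro-$p$ completion.

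Two of the implications are short. If $G$ is residually $p$, consider the finite set $S=(A\smallsetminus\{1\})\cup(B\smallsetminus\{1\})\cup\{xy^{-1}:x\in A,\ y\in B,\ x\neq y\ \text{in }G\}$ of nontrivial elements of $G$; for each $s\in S$ pick a normal subgroup $N_s$ of $p$-power index with $s\notin N_s$, and set $N=\bigcap_{s\in S}N_s$. Then $G/N$ embeds into $\prod_s G/N_s$, so it is a finite $p$-group, and $N\cap S=\emptyset$, so $P:=G/N$ is a finite $p$-group into which $A$ and $B$ inject with $A\cap B=U$. Conversely, given such a $P$, pick any chief series $P=P_0\supset\cdots\supset P_N=1$ and put $A_i=A\cap P_i$, $B_i=B\cap P_i$: the quotient $A_i/A_{i+1}$ injects into $P_i/P_{i+1}\cong\mathbb Z/p$, so $\{A_i\}$ and $\{B_i\}$ are chief series of $A$ and $B$, and $U\cap A_i=U\cap P_i=U\cap B_i$. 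Hence residual $p$-ness of $G$ implies the existence of such a $P$, which in turn implies the chief series condition.

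It remains to close the loop, from compatible chief series back to residual $p$-ness, and here I would again aim to build a single finite $p$-group $P\supseteq A,B$ with $A\cap B=U$, after which one finishes as follows: the canonical map from the pro-$p$ amalgam $\mathcal A=A\amalg_U B$ to a suitable finite $p$-group quotient is injective on $A$ and on $B$ with intersection $U$, so $\mathcal A$ is a proper pro-$p$ amalgam; then, invoking the pro-$p$ Bass--Serre theory (e.g. \cite{ribes2017profinite}), the discrete amalgam $G$ embeds into $\mathcal A$, and since $\mathcal A$ is pro-$p$ this yields residual $p$-ness. Constructing $P$ out of the compatible chief series is the technical heart of the theorem and the step I expect to be the main obstacle: the natural strategy is an induction on the common length of the series, peeling off the lowest nontrivial term --- a central $\mathbb Z/p$ in $A$ and in $B$ --- and rebuilding $P$ as a central $\mathbb Z/p$-extension of the group supplied by the inductive hypothesis; but one must handle separately the case where this $\mathbb Z/p$ lies in $U$ (where it is a genuine amalgamated quotient) and the case where it meets $U$ trivially, and in either case the existence of the rebuilt extension ultimately amounts to a Mayer--Vietoris-type gluing of classes in $H^2(-,\mathbb Z/p)$ over the amalgam, which succeeds precisely because the contributions from $A$ and $B$ agree over $U$.

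Finally, the last assertion of the theorem follows from a separate lemma: if $U\cong\mathbb Z/p^k$ is cyclic, then every finite $p$-group $A$ containing $U$ admits a chief series inducing the standard filtration $U\supset U^{p}\supset U^{p^{2}}\supset\cdots\supset U^{p^{k}}=1$ on $U$. This is proved by induction on $|A|$: a minimal normal subgroup $Z$ of $A$ is central of order $p$, and either $Z\le U$, in which case $Z=U^{p^{k-1}}$ (the unique subgroup of order $p$ in the cyclic group $U$) and one applies induction to $A/Z\supseteq U/Z$ and appends $Z\supset1$; or $Z\cap U=1$, in which case $UZ/Z\cong U$ is still cyclic of order $p^{k}$, one applies induction to $A/Z\supseteq UZ/Z$, pulls the resulting chief series back to $A$ (its terms contain $Z$ and terminate at $Z$), appends $Z\supset1$, and checks that the induced filtration on $U$ is carried isomorphically onto the standard filtration of $UZ/Z$. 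Running this lemma for both $A$ and $B$ produces compatible chief series, so the equivalence just established gives residual $p$-ness of $A*_U B$.
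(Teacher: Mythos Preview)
The paper does not prove this theorem: it is quoted verbatim as a result of Higman \cite{higman1964amalgams} and used as a black box in the proof of Proposition~\ref{prop:Amalgam is efficient}. So there is no ``paper's own proof'' to compare against.

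That said, your outline follows the same architecture as Higman's original argument, namely the three-way equivalence
\[
G=A*_U B\ \text{residually }p \;\Longleftrightarrow\; A,B\hookrightarrow P\ \text{for some finite }p\text{-group }P\text{ with }A\cap B=U \;\Longleftrightarrow\; \text{compatible chief series}.
\]
Two remarks on the execution. First, you correctly identify the passage ``compatible chief series $\Rightarrow$ common finite $p$-overgroup $P$'' as the crux and give only a sketch; this is exactly where Higman's work lies, and your inductive/cohomological plan is the right shape but would need to be filled in carefully (in particular the case split on whether the bottom central $\mathbb Z/p$ meets $U$). Second, your route from ``common $P$'' back to ``$G$ residually $p$'' via pro-$p$ Bass--Serre theory is valid but heavier than necessary and somewhat anachronistic relative to Higman's 1964 paper: once you have $\phi:G\to P$ injective on $A$ and $B$, the kernel $N=\ker\phi$ acts freely on the Bass--Serre tree of the discrete amalgam and is therefore free; then for $1\neq g\in N$ pick $M\trianglelefteq N$ of $p$-power index with $g\notin M$, replace $M$ by the finite intersection of its $G$-conjugates to get $M'\trianglelefteq G$ of $p$-power index in $N$ with $g\notin M'$, and observe $G/M'$ is an extension of a finite $p$-group by the finite $p$-group $G/N$, hence itself a finite $p$-group. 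This avoids invoking properness of pro-$p$ amalgams and the embedding of the discrete tree into the pro-$p$ tree.

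Your argument for the ``in particular'' clause (constructing a chief series of any finite $p$-group $A\supseteq U$ that cuts out the standard filtration on a cyclic $U$, by induction on $|A|$ using a minimal normal subgroup) is correct.
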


\begin{prop}
\label{prop:Amalgam is efficient}A splitting in $Split_{D,amalg}(\mathcal{G}_{r'},\mathbb{Z})$,
$\mathcal{G}_{r'}=G_{1}*_{L}G_{2}$ is a $p$-efficient splitting. 
\end{prop}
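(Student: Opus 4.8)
The plan is to verify the three conditions of $p$-efficiency for the one-edge splitting $\mathcal{G}_{r'}=G_1*_L G_2$ with $L=\langle c\rangle\cong\mathbb{Z}$: namely that $\mathcal{G}_{r'}$ is residually $p$, that $L$ is closed in the pro-$p$ topology on $\mathcal{G}_{r'}$, and that $\mathcal{G}_{r'}$ induces the full pro-$p$ topology on $L$. The last of these is already essentially done: by Proposition~\ref{prop:p torsion } (the proposition labeled ``p torsion'' above), for every $s$ there is a $p$-group quotient of $\mathcal{G}_{r'}$ in which the image of $c$ has order exactly $p^s$, which is precisely the statement that the subspace topology on $L=\langle c\rangle$ induced from the pro-$p$ topology on $\mathcal{G}_{r'}$ agrees with the full pro-$p$ topology on $\mathbb{Z}$. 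Residual $p$-ness of $\mathcal{G}_{r'}$ is the content of the preceding proposition (citing \cite{gildenhuys1975one}), so that condition is also in hand.

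Thus the real work is to show $L$ is closed in the pro-$p$ topology, and more fundamentally that the \emph{vertex} groups $G_1,G_2$ (which are free) are closed and carry their full pro-$p$ topology; once a subgroup carries its full pro-$p$ topology and that topology is Hausdorff, closedness follows from an approximation argument. First I would note that each $G_i$ is free of finite rank, hence residually $p$, and that any finite-rank free group has the property that its finitely generated subgroups are closed in the pro-$p$ topology and inherit the full pro-$p$ topology; in particular the cyclic subgroup $L$ sits correctly inside each $G_i$ separately. The key step is then to promote this from the vertex groups to the amalgam. Here I would invoke Higman's theorem (Theorem~\ref{thm: Higman}): since $L\cong\mathbb{Z}$ is cyclic, any amalgam of two finite $p$-groups along (the image of) a cyclic common subgroup is residually $p$, and more to the point the relevant finite quotients can be assembled compatibly. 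Concretely, given $g\in\mathcal{G}_{r'}\setminus L$, I would produce a $p$-group quotient $q:\mathcal{G}_{r'}\twoheadrightarrow P$ separating $g$ from $L$: choose finite $p$-quotients $G_i\twoheadrightarrow P_i$ that agree on $L$ (using that $\mathcal{G}_{r'}$ induces the full pro-$p$ topology on $L$, so the image of $c$ can be made to have a prescribed $p$-power order in both), pass to the amalgam $P_1*_{\bar L}P_2$, which is residually $p$ by Higman since $\bar L$ is cyclic, and finally map onto a finite $p$-group in which the images of $g$ and of $L$ are distinguished.

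The main obstacle I anticipate is the compatibility bookkeeping in this last step: to map $\mathcal{G}_{r'}=G_1*_LG_2$ onto $P_1*_{\bar L}P_2$ one needs the two chosen quotient maps $G_i\to P_i$ to restrict to the \emph{same} map on $L$, and one then needs that the resulting amalgam of finite $p$-groups is genuinely residually $p$ with $\bar L$ closed — which is exactly where the cyclicity hypothesis on $L$ and Higman's chief-series criterion are used. A secondary point requiring care is that collapsing $P_1*_{\bar L}P_2$ (an infinite group) to a finite $p$-group separating a given element from $\bar L$ uses residual $p$-ness together with the fact that $\bar L$, being finite, is automatically closed; this is routine but should be spelled out. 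I expect the write-up to follow the pattern of Wilkes's treatment of $p$-efficiency for surface-group splittings, with the Demushkin relator $x_1^{q}[x_1,y_1]\cdots[x_d,y_d]y_d^{-p^{r'}}$ entering only through Proposition~\ref{prop:p torsion }, which has already absorbed all the arithmetic of the exponents $q=p^r$ and $p^{r'}$.
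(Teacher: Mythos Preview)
Your machinery is exactly that of the paper: pass to finite $p$-quotients $G_i\to P_i$ of the vertex groups, use Proposition~\ref{prop:p torsion } to match the orders of the image of $c$ so that $P_1*_{\bar L}P_2$ is a genuine amalgam of finite $p$-groups over a cyclic subgroup, apply Higman's theorem to get residual $p$-ness, and then collapse to a finite $p$-group. So the strategy is the same.

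Two points of mis-emphasis, though. First, the target of the separability argument should be the \emph{vertex} groups, not $L$. The definition of $p$-efficiency requires that each $G_i$ be closed in the pro-$p$ topology on $\mathcal{G}_{r'}$ and carry its full pro-$p$ topology; the paper proves exactly this (for $g\notin G_1$ written in normal form, push to $P_1*_{\bar L}P_2$ so the image still lies outside $P_1$, then separate from the finite set $P_1$), and $L$ then comes for free since $L$ is $p$-closed in the free group $G_1$ and $G_1$ carries its full pro-$p$ topology. Your concrete step separates $g$ from $L$ instead, which is both harder to do directly (the image of $g$ could well land in $\bar L$ under a crude quotient) and not what is needed. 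Second, the sentence ``once a subgroup carries its full pro-$p$ topology and that topology is Hausdorff, closedness follows from an approximation argument'' is not true in general and should not be used as a shortcut; you really do need the separability argument for $G_1$ and $G_2$.

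You also do not explicitly address the ``full pro-$p$ topology on $G_i$'' half of the statement. The paper handles this with the same tool: given $H\trianglelefteq_p G_1$ with $LH/H\cong\mathbb{Z}/p^s$, pick (via Proposition~\ref{prop:p torsion }) a $p$-quotient $G_2\to P_2$ with the same image of $L$, form $P_1*_{\mathbb{Z}/p^s}P_2$, and use Higman again to find a finite $p$-quotient in which no element of $P_1$ dies; the composite from $\mathcal{G}_{r'}$ then has kernel meeting $G_1$ exactly in $H$. Add this step and redirect the separability argument from $L$ to $G_i$, and your write-up will coincide with the paper's.
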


\begin{proof}
Let $H\vartriangleleft_{p}G_{1}$ and $P_{1}=G_{1}/H$, and suppose
$LH/H\cong Z/p^{r}Z$. By Proposition \ref{prop:p torsion } there
is a $p$-group quotient $G_{2}\rightarrow P_{2}$ such that the image
of $L$ is also isomorphic to $Z/p^{r}Z$. The quotient $P_{1}\ast_{Z/p^{r}Z}P_{2}$
obtained is residually p (by theorem \ref{thm: Higman}), so it has
a $p$-group quotient $Q$ distinguishing all the elements of $P_{1}$.

The kernel of the composite map $G_{1}\rightarrow\mathcal{G}\rightarrow P_{1}*_{Z/p^{r}Z}P_{2}\rightarrow Q$
is $H$, so $\mathcal{G}$ induces the full pro-$p$ topology on $G_{1}$,
and similarly for $G_{2}$.

Given $g\in\mathcal{G}\backslash G_{1}$, we can write $g=a_{1}b_{2}\cdot\cdot\cdot a_{n}b_{n}$
where $a_{i}\in G_{1}$, $b_{i}\in G_{2}$ and all are not in $L$,
except possibly $b_{n}=1$ or $a_{1}\in L$.

Next, we have $H_{i}\vartriangleleft_{p}G_{i}$ such that the images
of every $a_{j}$ (resp. $b_{j}$) in $P_{1}=G_{1}/H_{1}$ is (resp.
$P_{2}=G_{2}/H_{2}$) not in the image of $L$.

By proposition \ref{prop:p torsion } we can actually take $H_{i}$
to be such that the image of L is $Z/p^{r}Z$ in both $P_{i}$.

By construction the image $\phi(g)$ is not in $P_{1}$, where $\phi:\mathcal{G}\rightarrow P_{1}*_{Z/p^{r}}P_{2}$
the quotient map.

Since $P_{1}*_{Z/p^{r}}P_{2}$ is residually $p$ (by theorem \ref{thm: Higman}),
we can find a $p$-group quotient $Q$ of $P_{1}*_{Z/p^{r}}P_{2}$
distinguishing $\phi(g)$ from $P_{1}$. 

The map $\mathcal{G}\rightarrow Q$ distinguishes $g$ from $G_{1}$
and so $G_{1}$ is $p$-separable in $\mathcal{G}$, and similarly
for $G_{2}$.
\end{proof}
We now need the following theorem of Chatzidakis \cite{chatzidakis1994some}:
\begin{thm}
\label{thm:Chatzidakis} Let $P$ be a finite $p$-group, let $A$
and $B$ be subgroups of $P$, and let $f:A\rightarrow B$ be an isomorphism. 

Suppose that $P$ has a chief series $\{P_{i}\}$ such that $f(A\cap P_{i})=B\cap P_{i}$
for all $i$ and the induced map $f_{i}:AP_{i}\cap P_{i-1}/P_{i}\rightarrow BP_{i}\cap P_{i-1}/P_{i}$
is the identity for all $i$.

Then $P$ embeds in a finite $p$-group $T$ in which $f$ is induced
by conjugation. Hence the $HNN$ extension $P*_{f}$ is residually
$p$. 
\end{thm}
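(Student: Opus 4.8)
The plan is to separate the statement into two parts: the \emph{embedding claim} --- that $P$ sits inside a finite $p$-group $T$ containing an element $t$ with $tat^{-1}=f(a)$ for every $a\in A$ --- and the deduction that $P*_f$ is residually $p$ once such a $T$ is available. The second part is essentially formal, so the core of the argument is the embedding claim, which I would prove by induction on the length of the chief series $\{P_i\}$, refined if necessary so that every successive quotient $P_{i-1}/P_i$ is exactly $\mathbb Z/p$ while retaining both hypotheses ($f(A\cap P_i)=B\cap P_i$ and $f_i=\mathrm{id}$ on $AP_i\cap P_{i-1}/P_i$).

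For the inductive step, let $Z=P_n$ be the last nontrivial term of the (refined) series, a central subgroup of order $p$, and put $\bar P=P/Z$, $\bar A=AZ/Z$, $\bar B=BZ/Z$. Since $f(A\cap Z)=B\cap Z$, the map $f$ descends to an injection $\bar f\colon\bar A\to\bar B$, and the induced chief series on $\bar P$ satisfies the same hypotheses and is one shorter; so by induction there is a finite $p$-group $\bar T\supseteq\bar P$ and $\bar t\in\bar T$ realizing $\bar f$ by conjugation. It then remains to lift through the central extension $1\to Z\to P\to\bar P\to 1$: I want a finite $p$-group $T\supseteq P$ with a surjection $T\twoheadrightarrow\bar T$ whose kernel is a central $\mathbb Z/p$ restricting over $\bar P$ to $P$, together with a lift $t\in T$ of $\bar t$ with $tat^{-1}=f(a)$ for all $a\in A$. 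Producing $T$ is an extension problem whose obstruction lives in the cokernel of $H^2(\bar T;\mathbb Z/p)\to H^2(\bar P;\mathbb Z/p)$; the point is that, before performing the lift, one can enlarge $\bar T$ inside the category of finite $p$-groups so that the relevant extension class is realized, without disturbing $\bar t$. Once $T$ exists, a lift $t$ of $\bar t$ is unique up to multiplication by the central $\mathbb Z/p$, and the hypothesis that $f$ induces the identity on the layer $AP_i\cap P_{i-1}/P_i$ is exactly what forces some such choice to satisfy $tat^{-1}=f(a)$ on the nose, not merely modulo $Z$. Setting up the induction so that this enlargement is available at every stage, and pinning down the correct lift from the ``identity'' hypothesis, is the main obstacle; it is the HNN analogue of the chief-series bookkeeping underlying Theorem \ref{thm: Higman}, and I would also arrange the construction so that $T$ carries a chief series restricting to one of $A$ (this is used below).

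For the deduction, note first that $P*_f$ embeds into $T*_f$: a word in $P$ and the stable letter $s$ that is Britton-reduced for $P*_f$ remains reduced in $T*_f$, since the reduction condition only refers to membership in $A$ and $B$. Because $f$ is conjugation by $t\in T$, the substitution $u=t^{-1}s$ gives $uau^{-1}=a$ for all $a\in A$, so $T*_f=\langle T,u\mid [u,a]=1,\ a\in A\rangle$, which is the amalgamated product $T*_A(A\times\mathbb Z)$ with $u$ spanning the $\mathbb Z$-factor. Finally $T*_A(A\times\mathbb Z)$ is residually $p$: an element of nonzero total $u$-exponent is detected by the retraction onto $\mathbb Z$ followed by $\mathbb Z\twoheadrightarrow\mathbb Z/p^N$; and any other nontrivial element has a fixed Britton-reduced form, so mapping $\mathbb Z\twoheadrightarrow\mathbb Z/p^N$ with $N$ larger than every exponent appearing in it preserves reducedness and lands the element nontrivially in the amalgam $T*_A(A\times\mathbb Z/p^N)$ of two finite $p$-groups, which is residually $p$ by Theorem \ref{thm: Higman} since the chief series of $T$ and the evident chief series of $A\times\mathbb Z/p^N$ induce the same chief series on $A$. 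Hence $P*_f\hookrightarrow T*_f$ is residually $p$, and so is $P*_f$.
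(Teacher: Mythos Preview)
The paper does not prove this theorem; it is quoted from Chatzidakis \cite{chatzidakis1994some} and used as a black box. So there is no ``paper's proof'' to compare against, and your sketch should be judged on its own.

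Your deduction of residual-$p$ from the embedding is essentially correct: the identification $T*_f\cong T*_A(A\times\mathbb Z)$ via $u=t^{-1}s$ is valid, and the reduction to Higman's theorem \ref{thm: Higman} for $T*_A(A\times\mathbb Z/p^N)$ works provided you have arranged a chief series of $T$ that restricts to one of $A$ (you flag this but do not verify it).

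The embedding argument, however, has a genuine gap. In your inductive step you build a central extension $1\to Z\to T\to\bar T\to 1$ restricting to $P$ over $\bar P$, then say that a lift $t$ of $\bar t$ is unique up to the central $\mathbb Z/p$ and that ``some such choice'' will satisfy $tat^{-1}=f(a)$. But since $Z$ is \emph{central} in $T$, replacing $t$ by $tz$ with $z\in Z$ does not change $tat^{-1}$ at all; every lift gives the same conjugation action. So there is no freedom in the lift to exploit, and either $tat^{-1}=f(a)$ holds for every lift or for none. The ``identity on layers'' hypothesis must therefore enter in the construction of $T$ itself---in selecting the right class in $H^2(\bar T;\mathbb Z/p)$ among those restricting to the class of $P$, or in the prior enlargement of $\bar T$---not in the choice of lift. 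You have also not justified why $\bar T$ can be enlarged within finite $p$-groups so that the needed cohomology class exists while keeping $\bar t$, nor why among the available extensions there is one making the obstruction homomorphism $a\mapsto tat^{-1}f(a)^{-1}\in Z$ vanish. These are exactly the points where Chatzidakis's argument does real work; your outline names the obstacle but does not overcome it.
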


\begin{prop}
\label{prop:HNN efficient}A splitting in $Split_{D,HNN}(\mathcal{G}_{r'},\mathbb{Z})$,
$\mathcal{G}_{r'}=H*_{\theta}$, for $\theta:A\rightarrow B$ the
isomorphism of cyclic groups, is a p-efficient splitting. 
\end{prop}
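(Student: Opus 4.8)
The plan is to verify the three conditions in the definition of $p$-efficiency for the one-vertex, one-edge graph of groups underlying $\mathcal{G}_{r'}=H*_{\theta}$: here $H$ is the free vertex group of rank $2d-1$ on the generators other than $x_d$, the edge group is the infinite cyclic $C=\langle y_d\rangle$, the stable letter $t$ is $x_d$ (up to inversion), and $\theta$ identifies $C$ with $B=\langle\theta(y_d)\rangle\le H$, where $\theta(y_d)=y_d^{-p^{r'}}x_1^{p^{r}}[x_1,y_1]\cdots[x_{d-1},y_{d-1}]y_d$. Residual $p$-ness of $\pi_1=\mathcal{G}_{r'}$ is the earlier proposition that every $\mathcal{G}_{r'}$ is residually $p$, and that $\mathcal{G}_{r'}$ induces the full pro-$p$ topology on the edge group $C$ is Proposition \ref{prop:p torsion } (the HNN case: $y_d$ is primitive in $\mathcal{G}_{r'}^{\mathrm{ab}}$, so $\mathcal{G}_{r'}$ has quotients onto $\mathbb{Z}/p^{s}$ sending $y_d$ to a generator). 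It therefore remains to prove (a) $\mathcal{G}_{r'}$ induces the full pro-$p$ topology on $H$, and (b) $H$ is closed in the pro-$p$ topology on $\mathcal{G}_{r'}$. Granting these, $C$ is also closed in $\mathcal{G}_{r'}$: as $y_d$ is a free generator of $H$ the subgroup $C$ is a retract of $H$, hence closed in $H$ with its own pro-$p$ topology, which by (a) is the subspace topology from $\mathcal{G}_{r'}$; since $H$ is closed by (b), $C$ is closed in $\mathcal{G}_{r'}$. So (a) and (b) finish the proof.

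For (a), fix $H'\vartriangleleft_{p}H$ and put $P_{0}:=H/H'$; I want a finite $p$-group $T$ and a homomorphism $\phi\colon\mathcal{G}_{r'}\to T$ with $\ker\phi\cap H\subseteq H'$. The engine is Chatzidakis's theorem \ref{thm:Chatzidakis}: it suffices to produce a finite $p$-quotient $q\colon H\twoheadrightarrow P$ with $\ker q\subseteq H'$ for which the partial isomorphism $\bar\theta\colon\langle q(y_d)\rangle\to\langle q(\theta(y_d))\rangle$, $q(y_d)\mapsto q(\theta(y_d))$, satisfies the chief-series hypothesis of that theorem; then $P$ embeds in a finite $p$-group $T$ in which $\bar\theta$ is conjugation by some $\tau\in T$, and $\phi|_{H}=(H\twoheadrightarrow P\hookrightarrow T)$, $\phi(t)=\tau$ respects the HNN relation and has the required kernel. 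To build $q$, choose $s$ larger than the orders of $q_{0}(y_d)$ and $q_{0}(\theta(y_d))$ in $P_{0}$, let $q_{1}\colon H\twoheadrightarrow\mathbb{Z}/p^{s}$ send $y_d\mapsto1$ and every other free generator to $0$, and set $q=(q_{0},q_{1})$ with image $P\le P_{0}\times\mathbb{Z}/p^{s}$, so $\ker q\subseteq H'$. Abelianizing $\theta(y_d)$ and killing all free generators but $y_d$ gives $q_{1}(\theta(y_d))=1-p^{r'}$, a unit mod $p^{s}$ because $r'\ge r\ge1$, so both $q(y_d)$ and $q(\theta(y_d))$ have order exactly $p^{s}$ in $P$. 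The normal series $P^{(j)}:=P\cap\bigl(P_{0}\times p^{j}(\mathbb{Z}/p^{s})\bigr)$ has $P^{(j-1)}/P^{(j)}\cong\mathbb{Z}/p$ for $1\le j\le s$, both cyclic subgroups descend through it at the same rate (with $\langle q(y_d)\rangle\cap P^{(j)}=\langle q(y_d)^{p^{j}}\rangle$ and likewise for $q(\theta(y_d))$), and $q(y_d)^{p^{j-1}}q(\theta(y_d))^{-p^{j-1}}$ lies in $P^{(j)}$, its $\mathbb{Z}/p^{s}$-component being $p^{r'+j-1}\in p^{j}(\mathbb{Z}/p^{s})$. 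Refining $P=P^{(0)}\supset\cdots\supset P^{(s)}\supset1$ to a chief series of $P$ therefore meets Chatzidakis's hypotheses, with every induced map the identity.

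For (b), given $g\in\mathcal{G}_{r'}\setminus H$ write $g=h_{0}t^{e_{1}}h_{1}\cdots t^{e_{m}}h_{m}$ in reduced HNN normal form with $m\ge1$. Both $C=\langle y_d\rangle$ and $B=\langle\theta(y_d)\rangle$ are closed in the pro-$p$ topology of $H$ (their generators are $p$-primitive in $H^{\mathrm{ab}}$, once more because $1-p^{r'}$ is a unit), so after shrinking $H'$ I may assume that the syllables $h_{i}$ which reducedness keeps outside $C$ (resp. $B$) still lie outside the images of $C$ (resp. $B$) in $P_{0}$. Running the construction of (a) for this $H'$ produces $P$, $\bar\theta$ and $T$ as above, and now the composite $\mathcal{G}_{r'}\to P*_{\bar\theta}$ carries $g$ to an element in reduced form of $t$-length $m\ge1$, hence outside $P$; since $P*_{\bar\theta}$ is residually $p$ by Chatzidakis \ref{thm:Chatzidakis} (its hypotheses being exactly those verified in (a)), a further finite $p$-quotient separates the image of $g$ from the finite subgroup $P\supseteq\phi(H)$. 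Thus $H$ is $p$-separable in $\mathcal{G}_{r'}$, and together with (a) this completes the verification that the splitting is $p$-efficient.

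The one genuinely delicate step is the verification of Chatzidakis's chief-series condition; everything else is bookkeeping. The reason it goes through is the explicit shape of $\theta$: sandwiching a product of $p^{r}$-th powers and commutators of the \emph{other} generators between $y_d^{-p^{r'}}$ and $y_d$ makes $\theta(y_d)$ congruent to $y_d^{1-p^{r'}}$, hence congruent to $y_d$ modulo the Frattini-type filtration after passing to the auxiliary quotient $q_{1}$, which is precisely the ``identity induced map'' requirement in Theorem \ref{thm:Chatzidakis}.
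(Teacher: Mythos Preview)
Your argument is correct and follows the same overall architecture as the paper: verify Chatzidakis's chief-series hypothesis for a suitable finite $p$-quotient of $H$ so that $P*_{\bar\theta}$ is residually $p$, deduce that $\mathcal{G}_{r'}$ induces the full pro-$p$ topology on $H$, and then use reduced HNN normal forms together with residual-$p$ of $P*_{\bar\theta}$ for the $p$-separability of $H$.

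Where you diverge is in the construction of the finite quotient $P$ and its chief series. The paper takes the lower central $p$-quotients $P=H/\gamma_n^{(p)}(H)$, uses that these are characteristic (so an automorphism of $H$ carrying $y_d$ to $\theta(y_d)$ forces the two images to have equal order), and reads off the chief series from $P^{\mathrm{ab}}/\langle\psi(x_1)\rangle$, where the key identity becomes $\bar a^{\,1-p^{r'}}=\bar b$. You instead start from an arbitrary $P_0=H/H'$ and adjoin the single coordinate $q_1\colon H\twoheadrightarrow\mathbb{Z}/p^s$, $y_d\mapsto1$; this makes the orders of $q(y_d)$ and $q(\theta(y_d))$ both exactly $p^s$ by direct inspection, and the chief series is the pullback of the obvious filtration on $\mathbb{Z}/p^s$. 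Both routes hinge on the same algebraic fact, namely $q_1(\theta(y_d))=1-p^{r'}\equiv1\pmod{p}$, which is exactly what makes the induced maps on chief factors the identity. Your version is a bit more self-contained, as it avoids the appeal to an automorphism of $H$ sending $y_d$ to $\theta(y_d)$ (a claim the paper uses for the equality of orders but does not justify, and which is not obvious when $r'<\infty$).

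One small point worth making explicit in your write-up of part~(b): the assertion that $B=\langle\theta(y_d)\rangle$ is closed in the pro-$p$ topology of $H$ does not follow merely from primitivity in $H^{\mathrm{ab}}$, but it does follow once you note that this primitivity forces $\theta(y_d)$ to be a non-proper-power in $H$, whence its centralizer in $H$ is exactly $B$; then any $g\in F\cap\overline{B}\subset\hat H_{(p)}$ commutes with $\theta(y_d)$ in $\hat H_{(p)}$, hence already in $H$, so $g\in B$. The paper leaves this point equally implicit.
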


\begin{proof}
Let $x_{1},...,x_{d-1},y_{d-1},y_{d}$, be generators of $\mathcal{G}_{r'}$
such that $A=y_{d}$ and $B=y_{d}^{-p^{r'}}x_{1}^{p^{r}}[x_{1},y_{1}]...[x_{d-1},y_{d-1}]y_{d}$.

First we will prove that $\mathcal{G}_{r'}$ induces the full pro-p
topology on $H$.

Let $P=H/\gamma_{n}^{(p)}(H)$ be one of the lower central $p$-quotients
of $H$, and let $\phi:H\rightarrow P$ be the quotient map.

Let $a$ and $b$ denote generators of $\phi(A)$ and $\phi(B)$ respectively,
and 

Since commutator subgroups and terms of the lower central p-series
are verbal subgroups, there is a commuting diagram: 

\[
\xymatrix{ & H\ar[dl]\ar[dr]\\
P\ar[d] &  & H^{ab}\ar[d]\\
P^{ab}\ar[rr]\sp(0.37)\cong &  & H^{ab}/\gamma_{n}^{(p)}(H^{ab})
}
\]
and so $P^{ab}=H^{ab}\otimes Z/p^{n-1}Z$.

The image of $a$ in $P$ thus has order at least $p^{n-1}$, also
by definition of the lower central series any element of $P$ has
order at most $p^{n-1}$.

Hence the image of $A$ in $P$ injects into $P^{ab}$.

Since $P$ is a characteristic quotient of $H$ and there is an automorphism
of $H$ taking $a$ to $b$, the order of $b$ will also be $p^{n-1}$.

Furthermore, if we denote by $\bar{a},\bar{b}$ the image of $a$
and $b$ in $P^{ab}/<\psi(x_{1})>$, where $\psi$ is the map from
$H$ to $P^{ab}$. We have $\bar{a}^{-p^{r'}}\bar{a}=\bar{b}$.

Now construct a chief series $\{P_{i}\}$ for $P$ whose first $n$
terms are the preimages of the terms of a chief series for $P^{ab}/<\psi(x_{1})>$
which intersects to a chief series on the subgroup of $P^{ab}/<\psi(x_{1})>$
generated by the image of $a$.

Then for $i\geq n$, we have $\phi(A)\cap P_{i}=\phi(B)\cap P_{i}=1$
and for $i<n$ the conditions of theorem \ref{thm:Chatzidakis} hold
by construction.

Hence $P*_{\phi(A)}$ is residually $p$, and we may take a $p$-group
quotient $P*_{\phi(A)}\rightarrow Q$ in which no element of $P$
is killed.

The kernel of the composite map $H\rightarrow G\rightarrow P*_{\phi(A)}\rightarrow Q$
is $\gamma_{n}^{(p)}(H)$ as required.

To show that $H$ is $p$-separable in $G$, proceed as in the proof
of proposition \ref{prop:Amalgam is efficient}: write $g\in G\backslash H$
as a reduced word in the sense of $HNN$ extensions, and take a sufficiently
deep lower central $p$-quotient $P=H/\gamma_{n}^{(p)}(H)$ so that
the image of $g$ in $P*_{\phi(A)}$ is again a reduced word not in
$P$. 

As shown above, $P*_{\phi(A)}$ is residually $p$, so admits a $p$-group
quotient $Q$ distinguishing the image of $g$ from the image of $P$. 

This quotient $Q$ of $G$ exhibits that $H$ is $p$-separable in
$G$.
\end{proof}
We get from the above:
\begin{cor}
\label{cor:p-eff splits}Every $\mathbb{Z}$ splitting of $\mathcal{G}_{r'}$
in $Split_{D}(\mathcal{G}_{r'},\mathbb{Z})$ is p-efficient .
\end{cor}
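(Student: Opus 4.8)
The plan is to observe that the definition of $Split_{D}(\mathcal{G}_{r'},\mathbb{Z})$ is precisely the union $Split_{D,amalg}(\mathcal{G}_{r'},\mathbb{Z}) \cup Split_{D,HNN}(\mathcal{G}_{r'},\mathbb{Z})$ (Definition \ref{def:D split and aut}), so every element $\alpha$ of $Split_{D}(\mathcal{G}_{r'},\mathbb{Z})$ falls into exactly one of the two cases already handled. First I would dispose of the amalgamated case: if $\alpha \in Split_{D,amalg}(\mathcal{G}_{r'},\mathbb{Z})$ then $\mathcal{G}_{r'} = G_1 *_L G_2$ as in Proposition \ref{prop:Amalgam is efficient}, which asserts exactly that this splitting is $p$-efficient. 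Next, if $\alpha \in Split_{D,HNN}(\mathcal{G}_{r'},\mathbb{Z})$ then $\mathcal{G}_{r'} = H *_\theta$ with $\theta$ an isomorphism of cyclic groups as in Proposition \ref{prop:HNN efficient}, which again asserts precisely that this splitting is $p$-efficient. One also needs $\pi_1$ residually $p$, which is part of the definition of $p$-efficient; this is supplied by the proposition that every $\mathcal{G}_{r'}$ is residually $p$ (citing \cite{gildenhuys1975one}). Combining these three ingredients gives the corollary.

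The only thing to check carefully is that the two propositions genuinely cover the literal definition of $Split_D$ — i.e. that there is no gap between the parametrized families written in Definition \ref{def:D split and aut} and the hypotheses "$\mathcal{G}_{r'} = G_1 *_L G_2$ in $Split_{D,amalg}$" and "$\mathcal{G}_{r'} = H*_\theta$ in $Split_{D,HNN}$" used in Propositions \ref{prop:Amalgam is efficient} and \ref{prop:HNN efficient}. Since $Split_D$ is defined to be exactly that union, this is immediate, and the corollary is essentially a bookkeeping statement. I expect no real obstacle here; the mathematical content lives entirely in Propositions \ref{prop:p torsion }, \ref{prop:Amalgam is efficient} and \ref{prop:HNN efficient}, which in turn rest on the Higman and Chatzidakis residual-$p$ criteria for amalgams and HNN extensions of finite $p$-groups with cyclic associated subgroups.

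Concretely, the proof is: Let $\alpha \in Split_D(\mathcal{G}_{r'},\mathbb{Z})$. By Definition \ref{def:D split and aut}, either $\alpha \in Split_{D,amalg}(\mathcal{G}_{r'},\mathbb{Z})$, in which case $\alpha$ is $p$-efficient by Proposition \ref{prop:Amalgam is efficient}, or $\alpha \in Split_{D,HNN}(\mathcal{G}_{r'},\mathbb{Z})$, in which case $\alpha$ is $p$-efficient by Proposition \ref{prop:HNN efficient}. In both cases the required residual-$p$-ness of $\pi_1 = \mathcal{G}_{r'}$ holds since $\mathcal{G}_{r'}$ is residually $p$. Hence every $\mathbb{Z}$-splitting in $Split_D(\mathcal{G}_{r'},\mathbb{Z})$ is $p$-efficient. $\qed$
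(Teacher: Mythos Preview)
Your proposal is correct and matches the paper's approach exactly: the paper simply writes ``We get from the above:'' before stating the corollary, meaning it is an immediate consequence of Propositions \ref{prop:Amalgam is efficient} and \ref{prop:HNN efficient} together with the definition of $Split_D$ as the union of the amalgam and HNN cases.
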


\begin{rem}
As mentioned in the introduction, the proof of propositions \ref{prop:Amalgam is efficient}
and \ref{prop:HNN efficient} is a direct generalization of the proof
of Wilkes \cite{wilkes2017virtual}.
\end{rem}

\begin{lem}
\label{lem:cong formula}Let $\alpha$ be a splitting of $G$, and
let $\phi\in Aut(G)$ then: $T_{\phi(\alpha)}^{k}=\phi T_{\alpha}^{k}\phi^{-1}$.
\end{lem}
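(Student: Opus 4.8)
The plan is to work directly from the definition of a Dehn twist associated to a splitting (definitions \ref{def:Dehn autom} and \ref{def:general Dehn autom}), together with the observation that applying an automorphism $\phi$ to a splitting $\alpha$ of $G$ just transports all the data of the graph of groups along $\phi$. Concretely, if $\alpha$ presents $G$ as $\pi_1(X,\boldsymbol{G},T)$ with edge groups $\boldsymbol{G}_e$ and boundary monomorphisms $\varphi_e$, then $\phi(\alpha)$ is the same underlying graph $X$ with vertex groups $\phi(\boldsymbol{G}_v)$, edge groups $\phi(\boldsymbol{G}_e)$, and boundary maps $\varphi_e' = \phi\circ\varphi_e$ (the new identification of $G$ with $\pi_1$ of this graph of groups is $\phi$ itself). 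If $(\gamma_e)_{e\in E(X)}$ with $\gamma_e\in Z(\boldsymbol{G}_e)$ is the collection of twisting elements defining $T_\alpha$, then $(\phi(\gamma_e))$ with $\phi(\gamma_e)\in Z(\phi(\boldsymbol{G}_e))$ is precisely the collection defining $T_{\phi(\alpha)}$ (for the same base vertex). So the statement reduces to chasing these definitions through $\phi$.

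First I would reduce to the one-edge case, which suffices for the applications and keeps notation light: a $\mathbb{Z}$-splitting $\alpha$ is either $A*_{\langle c\rangle}B$ or an HNN extension $A*_\theta$. Take the amalgamated case. Then $\phi(\alpha)$ is the splitting $\phi(A)*_{\langle\phi(c)\rangle}\phi(B)$, with the isomorphism $G\cong \phi(A)*_{\langle\phi(c)\rangle}\phi(B)$ realized by $\phi$. By definition \ref{def:Dehn autom}, $T_{\phi(\alpha)}$ fixes every element of $\phi(A)$ and sends $\phi(b)\mapsto \phi(\varphi(c))\,\phi(b)\,\phi(\varphi(c))^{-1}$ for $b\in B$; that is, $T_{\phi(\alpha)} = \phi\circ T_\alpha\circ\phi^{-1}$ on generators, hence everywhere. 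The HNN case is identical: $T_{\phi(\alpha)}$ fixes $\phi(A)$ and sends the stable letter $\phi(t)\mapsto \phi(c)\phi(t)$, which again is $\phi T_\alpha\phi^{-1}$. Raising to the $k$-th power gives $T_{\phi(\alpha)}^k = \phi T_\alpha^k\phi^{-1}$, since $(\phi T_\alpha\phi^{-1})^k = \phi T_\alpha^k\phi^{-1}$. For a general finite graph of groups one argues the same way using the explicit formula in definition \ref{def:general Dehn autom}: $\phi$ intertwines the action $t_e\mapsto \gamma_e t_e$, $g_v\mapsto \gamma_{e_1}\cdots\gamma_{e_n} g_v \gamma_{e_n}^{-1}\cdots\gamma_{e_1}^{-1}$ for $\alpha$ with the corresponding action for $\phi(\alpha)$ using $\phi(\gamma_{e_i})$.

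The only genuinely delicate point is making precise that $\phi(\alpha)$ really is a well-defined splitting and that its Dehn twist is the one computed above independently of the auxiliary choices (base vertex, maximal subtree). This is handled by the fact, recorded after definition \ref{def:general Dehn autom} and in remark \ref{rem:compatible collapse}, that the Dehn twist is well defined up to conjugation by the choice of base point, and that an isomorphism of graphs of groups carries Dehn twists to Dehn twists; since $\phi$ is literally such an isomorphism from $(X,\boldsymbol{G})$ to $(X,\phi\boldsymbol{G})$, the equality $T_{\phi(\alpha)} = \phi T_\alpha \phi^{-1}$ holds on the nose once we fix compatible base points, and up to inner automorphisms (hence exactly in $Out(G)$) for any choice. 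I expect no real obstacle here beyond bookkeeping; the substantive content is simply that Dehn twists are natural with respect to automorphisms of $G$.
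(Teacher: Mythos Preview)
Your proposal is correct and follows essentially the same approach as the paper: both reduce to the one-edge case, verify the identity on the vertex subgroups (equivalently, on generators) using the explicit formula for the Dehn twist, and treat the amalgamated product and HNN cases separately. The paper invokes the universal properties (definitions \ref{def:pro-p amalgamted} and \ref{def: pro-p HNN}) to justify that checking on $\phi(G_1)$ and $\phi(G_2)$ suffices, and works directly with the $k$-th power rather than proving $k=1$ first, but these are cosmetic differences.
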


\begin{proof}
Let $\alpha$ be a splitting of $G$ into an amalgamated free product
$G_{1}*_{H}G_{2}$, from the universal property of amalgamated free
products, (see definition \ref{def:pro-p amalgamted}) we get that
it is enough to show $T_{\phi(\alpha)}^{k}=\phi T_{\alpha}^{k}\phi^{-1}$
on the two subgroups $\phi(G_{1}),\phi(G_{2})$. 

Since Dehn twists act trivially on the first factor, then we get $T_{\phi(\alpha)}^{k}=\phi T_{\alpha}^{k}\phi^{-1}$
on $\phi(G_{1})$.

On $\phi(G_{2})$, $T_{\phi(\alpha)}^{k}$ acts by conjugation by
$(\phi(h))^{k}$ where $h$ is the fixed generator of $H$.

Let $\phi(g)\in\phi(G_{2})$, then $\phi T_{\alpha}^{k}\phi^{-1}(\phi(g))=\phi(T_{\alpha}^{k}(g))=\phi(h^{-k}gh^{k})=(\phi(h))^{-k}\phi(g)(\phi(h))^{k}$,
hence again the action is by conjugation by $(\phi(h))^{k}$.

From this we get $T_{\phi(\alpha)}^{k}=\phi T_{\alpha}^{k}\phi^{-1}$
for $\alpha$ an amalgamated free product.

The case of $\alpha$ an HNN extension is very similar, using the
universal property of HNN extensions (see definition \ref{def: pro-p HNN}).

\end{proof}
\begin{cor}
\label{cor:orbits dehn}If $\alpha,\beta\in Split(G,\mathbb{Z}_{p})$
are in the same $Aut(G)$ orbit, then $T_{\alpha}^{k}$ is inner if
and only if $T_{\beta}^{k}$ is inner.
\end{cor}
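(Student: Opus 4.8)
The plan is to deduce this immediately from the conjugation formula in Lemma \ref{lem:cong formula}, using only the elementary fact that $\mathrm{Inn}(G)$ is a normal subgroup of $\mathrm{Aut}(G)$. Suppose $\alpha$ and $\beta$ lie in the same $\mathrm{Aut}(G)$ orbit, so that $\beta=\phi(\alpha)$ for some $\phi\in\mathrm{Aut}(G)$ (here it matters that the orbit is taken under honest automorphisms, not merely outer ones, so that $\phi$ is available as an element of $\mathrm{Aut}(G)$). By Lemma \ref{lem:cong formula} we have $T_\beta^{k}=T_{\phi(\alpha)}^{k}=\phi\, T_\alpha^{k}\,\phi^{-1}$ in $\mathrm{Aut}(G)$.

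Now recall that for any $g\in G$, writing $c_g$ for conjugation by $g$, one has $\phi c_g \phi^{-1}=c_{\phi(g)}$; hence conjugation by $\phi$ carries $\mathrm{Inn}(G)$ bijectively onto itself. Applying this to the identity $T_\beta^{k}=\phi\, T_\alpha^{k}\,\phi^{-1}$, we conclude that $T_\beta^{k}\in\mathrm{Inn}(G)$ if and only if $T_\alpha^{k}\in\mathrm{Inn}(G)$; that is, $T_\beta^{k}$ is inner precisely when $T_\alpha^{k}$ is. Equivalently, passing to $\mathrm{Out}(G)$, the images of $T_\alpha^{k}$ and $T_\beta^{k}$ are conjugate, so one is trivial iff the other is.

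There is essentially no obstacle here: the content is entirely contained in Lemma \ref{lem:cong formula}, and the only thing to be slightly careful about is the distinction between the $\mathrm{Aut}(G)$-orbit and the $\mathrm{Out}(G)$-orbit — the statement is for the former, which is exactly what makes a representative automorphism $\phi$ available to conjugate by. (The corollary as stated is phrased for $G$ a Demuskin group with $\mathbb{Z}_p$-splittings, but the argument uses nothing beyond Lemma \ref{lem:cong formula}, which is proved in that generality via the universal properties of pro-$p$ amalgams and HNN extensions.)
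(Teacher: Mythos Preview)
Your proof is correct and follows essentially the same approach as the paper: both use Lemma~\ref{lem:cong formula} to write $T_\beta^{k}=\phi T_\alpha^{k}\phi^{-1}$ and then observe that conjugating an inner automorphism $c_g$ by $\phi$ yields $c_{\phi(g)}$. The paper spells out this last step by a direct element-by-element computation, while you phrase it as the normality of $\mathrm{Inn}(G)$ in $\mathrm{Aut}(G)$, but the content is identical.
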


\begin{proof}
Let $\phi\in Aut(G)$ be the automorphism for which $\phi(\alpha)=\beta$
and assume $T_{\alpha}^{k}$ is the automorphism given by conjugation
by $g\in G$.

Let $x\in G$, then $T_{\beta}^{k}(x)=T_{\phi(\alpha)}^{k}(x)=\phi T_{\alpha}^{k}\phi^{-1}(x)=\phi(g^{-1}\phi^{-1}(x)g)=\phi(g^{-1})\phi(\phi^{-1}(x))\phi(g)=\phi(g^{-1})x\phi(g)$,
and so we get $T_{\beta}^{k}$ is inner, given by conjugation by $\phi(g)$.

The other direction follows in a similar manner.
\end{proof}

\subsubsection{Complexes of cyclic splittings}

We now show that actually any splitting of $G$ ``comes from'' a
splitting of $\mathcal{G}_{r'}$ for some $r'\geq r$.
\begin{defn}
For splittings $x$ and $y$ we say $x\leq y$ if $x$ is a refinement
of $y$.
\end{defn}

\begin{thm}
\label{thm: equivariant CC map}For every $r'\geq r$ we have a $Aut(\mathcal{G}_{r'})$
-equivariant map $j:Split_{D}(\mathcal{G}_{r'},\mathbb{Z})\rightarrow Split(G,\mathbb{Z}_{p})$
with the further properties that $x\leq y$ if and only if $j(x)\leq j(y)$
and if $z\in Split(G,\mathbb{Z}_{p})$ has $z\leq j(y)$ then $z=j(x)$
for some $x\leq y$.
\end{thm}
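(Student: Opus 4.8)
The plan is to construct the map $j$ via the p-efficiency of splittings in $Split_D(\mathcal{G}_{r'},\mathbb{Z})$, established in Corollary \ref{cor:p-eff splits}. Given $\alpha \in Split_D(\mathcal{G}_{r'},\mathbb{Z})$, say $\mathcal{G}_{r'} = F_1 *_L F_2$ or $F_1 *_\theta$, p-efficiency says exactly that applying the pro-$p$ completion functor yields a graph of pro-$p$ groups whose fundamental group is $G$ (the pro-$p$ completion of $\mathcal{G}_{r'}$ is $G$, independent of $r'$, since the extra relator $y_d^{-p^{r'}}$ becomes redundant in the Demushkin relation modulo the pro-$p$ completion — this is implicit in the earlier discussion), with cyclic edge group $\hat L = \mathbb{Z}_p$. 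So we define $j(\alpha)$ to be this pro-$p$ splitting; it lands in $Split(G,\mathbb{Z}_p)$. The $Aut(\mathcal{G}_{r'})$-equivariance is then essentially formal: an automorphism $\phi$ of $\mathcal{G}_{r'}$ induces an automorphism $\hat\phi$ of $G$ by functoriality of pro-$p$ completion, and it carries the completed graph-of-groups decomposition of $\alpha$ to that of $\phi(\alpha)$, so $j(\phi(\alpha)) = \hat\phi(j(\alpha))$.

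Next I would handle the order-theoretic properties. The forward implication $x \leq y \implies j(x) \leq j(y)$ follows because refinement of graphs of groups is preserved by the completion functor (collapsing an edge commutes with pro-$p$ completion, again using p-efficiency of the intermediate graphs — one needs that each subgraph occurring is itself p-efficient, which should follow from the structure of $Split_D$, or be checked directly as in Propositions \ref{prop:Amalgam is efficient} and \ref{prop:HNN efficient}). For the converse $j(x) \leq j(y) \implies x \leq y$, and more substantively for the last clause, the strategy is to use the faithfulness of pro-$p$ completion coming from residual $p$-ness: since $\mathcal{G}_{r'}$ is residually $p$ (Proposition before \ref{prop:p torsion }) it embeds into $G$, and a splitting of $G$ refining $j(y)$ restricts, via the action on the relevant Bass–Serre (pro-$p$) tree, to an action of the discrete group $\mathcal{G}_{r'}$ on a pro-$p$ tree refining the tree of $y$; one then argues that this discrete action has a finite quotient graph and hence, by discrete Bass–Serre theory, descends to a genuine discrete splitting $x$ of $\mathcal{G}_{r'}$ with $x \leq y$ and $j(x) = z$. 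This is where I would lean on the techniques of Wilkes (\cite{wilkes2020classification,wilkes2017virtual}) cited in the introduction: the classification of cyclic pro-$p$ splittings of a surface-like group in terms of discrete ones, transported to the Demushkin setting.

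The main obstacle I expect is the last clause: showing that \emph{every} $z \in Split(G,\mathbb{Z}_p)$ with $z \leq j(y)$ is of the form $j(x)$ for a discrete $x \leq y$. This requires knowing that an arbitrary pro-$p$ cyclic splitting of $G$ that refines a known discrete-origin splitting is itself of discrete origin — i.e. a rigidity statement saying pro-$p$ cyclic splittings of a Demushkin group don't "create new" splittings beyond completions of discrete ones. The key input is presumably the combination of \cite{wilkes2019relative} Theorem 5.18 and \cite{wilkes2020classification} Theorem 3.3 referenced in the remark after Definition \ref{def:D split and aut}, which classify the splittings, together with a Stallings–Dunwoody–type accessibility or a direct analysis of how $\mathbb{Z}_p$ sits inside $G$ (using that $G$ is a pro-$p$ PD$_2$ group, so edge groups $\mathbb{Z}_p$ are necessarily "peripheral"-like and the vertex groups are free pro-$p$). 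Concretely I would: (1) take the pro-$p$ tree $T_z$ of $z$, which maps $\pi_1$-equivariantly onto the tree $T_{j(y)}$; (2) pull the edge stabilizer of $z$ back and show it is conjugate into a vertex group of $j(y)$, giving a refinement of one vertex group of $y$; (3) show that vertex group (a free pro-$p$ group, being of infinite index in $G$) splits over $\mathbb{Z}_p$ in a way that is the completion of a free-group splitting — this is where p-efficiency of free groups / Wilkes' results enter; (4) reassemble to get the discrete refinement $x$ of $y$, and verify $j(x) = z$ by uniqueness of the pro-$p$ tree (the standard graph) associated to a graph of pro-$p$ groups. Verifying that the reassembled $x$ genuinely lies in $Split_D(\mathcal{G}_{r'},\mathbb{Z})$ (the restricted family of Definition \ref{def:D split and aut}) rather than some exotic discrete splitting is the delicate bookkeeping point, and is exactly where "keeping track of more discrete groups completing to the given pro-$p$ group" (as the introduction warns) becomes necessary.
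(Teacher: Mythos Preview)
Your construction of $j$ via p-efficiency (Corollary \ref{cor:p-eff splits}) and the equivariance argument are exactly what the paper does, and your treatment of the forward implication $x \leq y \Rightarrow j(x) \leq j(y)$ via ``collapsing commutes with completion'' matches as well.

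Where you diverge is the last clause, and the divergence comes from reading the order relation the way the definition states it rather than the way the proof uses it. In the paper's proof, $z \leq j(y)$ is taken to mean that the graph of groups underlying $z$ is obtained from that of $j(y)$ by \emph{collapsing} a sub-graph-of-groups (the definition of $\leq$ just before the theorem and its use in the proof are inconsistent; the proof is the operative version). With that reading the last clause is one line: perform the identical collapse on the discrete graph of groups $y$ to produce $x$, and p-efficiency guarantees $j(x)=z$. No rigidity, no tree-theoretic descent, no appeal to the PD$_2$ classification is needed.

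What you outline in your steps (1)--(4) --- that an arbitrary pro-$p$ cyclic splitting of $G$ descends to a discrete one, via restricting the action on the pro-$p$ Bass--Serre tree and invoking \cite{wilkes2019relative,wilkes2020classification} --- is essentially the content of the \emph{next} result, Theorem \ref{thm:curve complex bijection}, where those references are actually used to show $j$ hits every $Aut(G)$-orbit. Theorem \ref{thm: equivariant CC map} is meant to be a purely formal setup: build $j$ and record that it respects collapsing. You are front-loading the hard classification work into the wrong theorem; save it for the surjectivity statement.
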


\begin{proof}
Construction of $i$:

Take $\alpha\in Split_{D}(\mathcal{G}_{r'},\mathbb{Z})$ for some
$r'$, it is is p-efficient by corollary \ref{cor:p-eff splits}.

Therefore by \cite{ribes2017profinite} propositions 6.5.3 and 6.5.4
there is an induced splitting of $G$ as an injective graph of pro-p
groups with procyclic edge groups.

The standard tree dual to this splitting (see \cite{ribes2017profinite}
theorem 6.5.2) is a representative of an element $j(\alpha)$ of $Split(G,\mathbb{Z}_{p})$. 

Note that the Bass-Serre tree of the splitting $\alpha$ of $\mathcal{G}_{r'}$
is naturally embedded as an abstract subgraph of the Bass-Serre tree
of $j(\alpha)$ which is dense in the topology on it, by again \cite{ribes2017profinite}
propositions 6.5.4.

We have $x\leq y$ if and only if $j(x)\leq j(y)$, due to the fact
that $x\leq y$ if only if the graph of groups decomposition of $\mathcal{G}_{r'}$
corresponding to $x$ is obtained from the decomposition corresponding
to $y$ by collapsing some sub-graphs-of-groups; the same collapsed
subgraph can be found in $j(x)\leq j(y)$.

If $z\leq j(y)$ the graphs of groups of $z$ is obtained from $j(y)$
by collapsing some sub-graphs-of-groups; doing such collapses on the
graph of discrete groups corresponding to y, we get the required $x$. 
\end{proof}
\begin{rem}
The property on refinements gives us that the map is actually a map
between the complexes $\mathcal{C}_{D}(\mathcal{G}_{r'})$ and $\mathcal{C}(G)$.
\end{rem}

We now want:
\begin{thm}
\label{thm:curve complex bijection}The map 
\[
j:\amalg_{r'\geq r}Split_{D,amalg}(\mathcal{G}_{r'},\mathbb{Z})/Aut(\mathcal{G})\amalg Split_{D,HNN}(\mathcal{G},\mathbb{Z})/Aut(\mathcal{G})\rightarrow Split(G,\mathbb{Z}_{p})/Aut(G)
\]
 is a bijection which has $1$ orbit for HNN-extension, and exactly
1 orbit for each pair of integers $r'\geq r$ and $1\leq n\leq d$.
\end{thm}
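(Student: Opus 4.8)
The plan is to establish the bijection in two stages: first surjectivity of $j$ (together with the fact that the listed splittings of the $\mathcal{G}_{r'}$ represent all $Aut(G)$-orbits), then injectivity (distinct orbits of discrete splittings go to distinct orbits of pro-$p$ splittings). For surjectivity, I would start with an arbitrary $\mathbb{Z}_p$-splitting $\alpha$ of $G$, i.e. a one-edge graph of pro-$p$ groups with procyclic edge group and fundamental group $G$. Because $G$ is a Demushkin group (hence a Poincar\'e duality group of dimension $2$), every proper closed subgroup of infinite index is free pro-$p$; so the vertex group(s) of $\alpha$ are free pro-$p$. Then I would invoke the classification of pro-$p$ Demushkin splittings — this is exactly the pro-$p$ analogue of the combination of Wilkes' results cited in the remark after Definition \ref{def:D split and aut} (\cite{wilkes2019relative} Theorem 5.18 and \cite{wilkes2020classification} Theorem 3.3), applied in the pro-$p$ category — to conclude that, up to $Aut(G)$, $\alpha$ is one of a short explicit list: either an HNN extension $F_{2d-1}*_\theta$ with the Demushkin relator split across the stable letter, or an amalgam $F_{2n}*_{\mathbb{Z}_p}F_{2d-2n}$ where the edge generator maps to $x_1^{p^r}[x_1,y_1]\cdots[x_n,y_n]$ on one side and to the complementary product on the other, for $1\le n\le d$. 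Each of these is visibly the pro-$p$ completion of one of the discrete splittings in $Split_{D,amalg}(\mathcal{G}_{r'},\mathbb{Z})$ or $Split_{D,HNN}(\mathcal{G},\mathbb{Z})$: by Corollary \ref{cor:p-eff splits} those discrete splittings are $p$-efficient, so by the remark following the definition of $p$-efficiency their pro-$p$ completions are exactly graphs of pro-$p$ groups with the same vertex/edge groups, and hence are carried by $j$ to precisely these orbits. This shows $j$ is onto and that the source decomposes into exactly the claimed orbits (one HNN orbit; one amalgam orbit for each $(r',n)$ with $r'\ge r$, $1\le n\le d$) — the parameter $r'$ enters because on the ``$F_{2d-2n}$'' side the edge element equals a product of commutators times $y_d^{-p^{r'}}$, and different $r'$ give different discrete groups $\mathcal{G}_{r'}$ whose pro-$p$ completion is nonetheless the same $G$ (the relator $y_d^{-p^{r'}}$ becomes invisible after pro-$p$ completion once it is absorbed into the Demushkin relator, which is where $r'\ge r$ is needed).

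For injectivity, suppose $x\in Split_{D,*}(\mathcal{G}_{r'},\mathbb{Z})$ and $x'\in Split_{D,*}(\mathcal{G}_{r''},\mathbb{Z})$ have $j(x)$ and $j(x')$ in the same $Aut(G)$-orbit; I must show they were already in the same orbit of the source, which in particular forces $r'=r''$ and (in the amalgam case) the same $n$. The key tool is Theorem \ref{thm: equivariant CC map}: the discrete Bass--Serre tree of $x$ embeds as a dense abstract subtree of the pro-$p$ Bass--Serre tree of $j(x)$, and this embedding is natural. So an isomorphism of pro-$p$ trees $S(j(x))\to S(j(x'))$ intertwining the $G$-actions (coming from the $Aut(G)$ element $\phi$) restricts to a $G$-equivariant bijection on the dense abstract subtrees; tracking vertex and edge stabilizers, which for $j(x)$ are the pro-$p$ completions of the vertex/edge groups of $x$ and which meet the discrete subgroups $\mathcal{G}_{r'}$, $\mathcal{G}_{r''}$ in the corresponding discrete vertex/edge groups, one recovers an isomorphism of the discrete graphs of groups. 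The numerical invariants — number of vertices (one for HNN, two for amalgam), the abelianized picture of the vertex groups, and the torsion exponent $p^{r'}$ visible in $\mathcal{G}_{r'}^{ab}$ versus $\mathcal{G}_{r''}^{ab}$ — are preserved, pinning down $r'=r''$ and the splitting type and $n$. Finally, once the discrete graphs of groups are abstractly isomorphic and the index set is matched, one upgrades the abstract isomorphism to an element of $Aut(\mathcal{G}_{r'})$ carrying $x$ to $x'$, using that any isomorphism of $\mathcal{G}_{r'}$ permuting such a splitting comes from an automorphism (again via the classification of splittings of the discrete Demushkin-type group, \cite{wilkes2020classification} Theorem 3.3 / \cite{wilkes2019relative}).

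The order of operations I would follow: (1) reduce an arbitrary pro-$p$ $\mathbb{Z}_p$-splitting to free-pro-$p$ vertex groups via Poincar\'e duality; (2) apply the pro-$p$ Demushkin splitting classification to get the explicit short list of orbits; (3) match each orbit to a $p$-efficient discrete splitting and use Corollary \ref{cor:p-eff splits} plus the $p$-efficiency remark to see $j$ hits it — giving surjectivity and the orbit count; (4) for injectivity, use the density of the discrete tree inside the pro-$p$ tree (Theorem \ref{thm: equivariant CC map}) to descend an $Aut(G)$-equivalence to an equivalence of discrete graphs of groups; (5) read off the invariants $r'$, $n$, and HNN-vs-amalgam to conclude the discrete splittings lie over the same index, and promote the match to an $Aut(\mathcal{G}_{r'})$-equivalence.

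The main obstacle I anticipate is step (2) together with the injectivity argument's reliance on it: one genuinely needs that \emph{every} one-edge pro-$p$ $\mathbb{Z}_p$-splitting of a Demushkin group is $Aut(G)$-conjugate to one on the explicit list — not merely that the listed ones exist. This is the pro-$p$ analogue of a nontrivial rigidity statement (classification of JSJ-type decompositions), and making it rigorous in the pro-$p$ world requires care with profinite Bass--Serre theory (acylindricity of the action, finiteness of the graph quotient, control of edge groups being exactly procyclic rather than merely containing a procyclic subgroup). The paper signals that this follows the Wilkes framework \cite{wilkes2019relative,wilkes2020classification}; the work is in transporting those discrete/profinite surface-group arguments to the Demushkin setting, in particular handling the extra parameter $r$ coming from the orientation character and the loss of the geometric (surface) tools — exactly the ``main differences'' flagged in the introduction. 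The bookkeeping that different $\mathcal{G}_{r'}$ with $r'\ge r$ all pro-$p$-complete to the same $G$ yet give distinct source orbits is the combinatorial heart that the statement's phrasing (``exactly $1$ orbit for each pair $r'\ge r$'') is advertising, and verifying it cleanly is where I would spend the most effort.
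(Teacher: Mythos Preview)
Your surjectivity outline is essentially the paper's argument: invoke the Wilkes classification of pro-$p$ $PD^2$ pairs to put an arbitrary $\mathbb{Z}_p$-splitting into normal form, then match it with a $p$-efficient discrete splitting. One point you skate over is the constraint on the exponents: the classification gives edge inclusions of the form $x_1^{p^{s}}[x_1,y_1]\cdots[x_n,y_n]$ and $y_d^{p^{s'}}[y_d,x_d]\cdots[y_{n+1},x_{n+1}]$ for some $s,s'$, not automatically with $s=r$. The paper uses that the dualizing module of each pair is the restriction of that of $G$ to get $s,s'\ge r$, and then that if both were $>r$ the orientation character of $G$ would vanish mod $p^{r+1}$, contradicting the definition of $r$; hence $\min(s,s')=r$, and the other exponent is the $r'$. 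You should make this explicit.

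Your injectivity argument, however, has a genuine gap. The step ``an isomorphism of pro-$p$ trees $S(j(x))\to S(j(x'))$ intertwining the $G$-actions restricts to a bijection on the dense abstract subtrees'' does not follow from density: the discrete subtree is the $\mathcal{G}_{r'}$-orbit of a fundamental domain, and an automorphism $\phi\in Aut(G)$ has no reason to preserve the image of $\mathcal{G}_{r'}$ in $G$, so no reason to carry one discrete subtree to the other. Descending $\phi$ to an element of $Aut(\mathcal{G}_{r'})$ is precisely what you are trying to prove, so this is circular. Relatedly, your claim that $r'$ is ``visible in $\mathcal{G}_{r'}^{ab}$'' is false: abelianising $x_1^{p^r}[x_1,y_1]\cdots[x_d,y_d]y_d^{-p^{r'}}$ gives $p^r x_1 = p^{r'} y_d$, and for $r'\ge r$ this yields $\mathcal{G}_{r'}^{ab}\cong\mathbb{Z}^{2d-1}\times\mathbb{Z}/p^r$ independently of $r'$. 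The paper's injectivity is simpler and avoids all of this: the pair invariants $(n,s,s')$ produced by the $PD^2$-pair classification are intrinsic isomorphism invariants of the vertex pairs $(G_i,L)$, hence are preserved by any $\phi\in Aut(G)$ carrying one splitting to another. That immediately forces the same $(n,r')$, which is all that is needed since the source already has at most one orbit for each such pair.
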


\begin{proof}
Let $\alpha$ be either the $HNN$ extension from theorem \ref{thm:HNN discrete Demuskin}
or \ref{thm:amalgam discrete demuskin} for some integer $n$ between
1 and $d$.

We will show that any $\beta\in Split(G,\mathbb{Z}_{p})$ has some
$\psi\in Aut(G)$ such that $\psi(\beta)=j(\alpha)$.

Take such $\beta\in Split(G,\mathbb{Z}_{p})$. 

Suppose first that $\beta$ is an amalgamated pro-p product $G_{1}*_{L}G_{2}$. 

From the classification of $PD^{2}$ pairs in \cite{wilkes2020classification}
Theorem 3.3, and theorem 5.18 in \cite{wilkes2019relative}, we have
that there is some basis, such that $G_{1}$ is free of rank $2n$
and $G_{2}$ is free of rank $2d-2n$, and for which the image of
$L$ in $G_{1}$ is $x_{1}^{p^{s}}[x_{1},y_{1}]...[x_{n},y_{n}]$
and in $G_{2}$ is $y_{d}^{p^{s'}}[y_{d},x_{d}]...[y_{n+1},x_{n+1}]$,
for some $s$ and $s'$.

Since the dualizing module of the pairs , is the restriction of $G$,
we have $s,s'\geq r$.

On the other hand, if both $s,s'>r$ we would have that mod $p^{r}$
both maps are trivial, and so from the universal property of amalgamated
products, we would get that so is $\chi_{G}$, in contradiction to
the definition of $r$.

Thus either $s'=r$ or $s=r$. Suppose $s=r$.

Take $\alpha\in Split_{D,amalg}(\mathcal{G}_{r'},\mathbb{Z})\text{ to be }\text{\ensuremath{\mathcal{G}_{r'}}}=F_{2n}*_{L'}F_{2d-2n}$
for $r'=s'$ and for $2n=rk(G_{1})$ and $L'=<c>$ is cyclic and we
map $c$ to $x_{1}^{q}[x_{1},y_{1}]...[x_{n},y_{n}]$ in $F_{2n}$
and to $y_{d}^{p^{r'}}[y_{d},x_{d}]..[y_{n+1},x_{n+1}]$ in $F_{d-2n}$
for some basis $x_{i},y_{i}$.

Looking at $j(\alpha)$ we get isomorphisms from $(G_{1},L)$ to $j((F_{2n},L'))$
and $(G_{2},L)$ to $j((F_{2d-2n},L'))$ as pairs of groups (see \cite{wilkes2019relative})
, which glue together to an automorphism of $G$ sending $j(\alpha)$
to $\beta$.

Given another splitting $\beta'$ of $G=G'_{1}*_{L''}G'_{2}$ for
which there is $\varphi\in Aut(G)$ for which $\varphi(\beta')=\beta$,
we must have (after possibly changing between $G'_{1}$ and $G'_{2}$)
that $G'_{1}$ is free of rank $2n$ and $G'_{2}$ is free of rank
$2d-2n$, and for which the image of $L$ in $G_{1}$ is $x_{1}^{p^{s}}[x_{1},y_{1}]...[x_{n},y_{n}]$
and in $G_{2}$ is $y_{d}^{p^{s'}}[y_{d},x_{d}]...[y_{n+1},x_{n+1}]$,
for the same $s$ and $s'$ of $G_{1},G_{2}$ .

For the case of $\beta$ is an HNN extension $G_{1}*_{L}$, one needs
to be a bit more careful in finding two isomorphisms to $F_{2d-1}$
such that they will glue properly to an automorphism of $G$.

Such gluing works, the proof for which is the same as the end of part
(3) of theorem 4.6 of \cite{wilkes2020classification}.
\end{proof}

\begin{rem}
As mentioned in the introduction the above theorem, gives us that
every splitting comes from some known list of splittings. This, and
the proof of it, is similar to the case of pro-p completions of surface
groups as seen in \cite{wilkes2020classification}.

Due to the $\mathcal{G}_{r'}$ having isomorphic pro-p completions,
but having infinitely many isomorphism classes as discrete group (see
lemma \ref{lem:non isomorphic}), we need to take them all. On the
other hand, since we want a bijection, and HNN extensions all have
only a single orbit, we need to take HNN extensions only from one
$\mathcal{G}_{r'}$, we choose $r'=\infty$ for convenience.
\end{rem}

We now need two final lemma about splittings of $G$:
\begin{lem}
\label{lem:p-cong sep}Let $\alpha\in Split_{D,amalg}(\mathcal{G}_{r'},\mathbb{Z})$,
and $\beta$ a splitting having non trivial intersection with $\alpha$.
Let $c$ be the generator of the edge of $\beta$, then there is a
homomorphism $\varphi:G\rightarrow P$ such that $P$ is a finite
p-group, and $\varphi(c)$ is not conjugate to $\varphi(T_{\alpha}^{k}(c))$.
\end{lem}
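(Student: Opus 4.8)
## Proof proposal for Lemma \ref{lem:p-cong sep}

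The plan is to use $p$-efficiency of the splitting $\alpha$ to transfer the problem from the pro-$p$ group $G$ to a finite $p$-quotient, and to exploit the fact that $\beta$ intersects $\alpha$ in order to force the generator $c$ of $\beta$ to act hyperbolically on the Bass-Serre tree of $\alpha$. First I would recall, via Corollary \ref{cor:p-eff splits}, that $\alpha$ is $p$-efficient, so the discrete splitting of $\mathcal{G}_{r'}$ completes to an injective graph of pro-$p$ groups realizing $\alpha$, with a standard pro-$p$ tree $T$ on which $G$ acts. Because $\beta$ intersects $\alpha$, by the criterion recalled in Subsection~\ref{subsec:Profinite Bass-serre} (following \cite{rips1995cyclic,scott2000splittings}) the generator $c$ of the edge group of $\beta$ is \emph{not} conjugate into either vertex group of $\alpha$; equivalently $c$ acts on $T$ with positive translation length $\ell_T(c) > 0$.

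Next I would record how $T_\alpha$ moves $c$. Writing $c$ in normal form with respect to the amalgam $\mathcal{G}_{r'} = G_1 *_L G_2$ realizing $\alpha$, say $c = g_1 h_1 \cdots g_m h_m$ with syllables alternating between $G_1\setminus L$ and $G_2 \setminus L$, the Dehn twist $T_\alpha$ (Definition \ref{def:Dehn autom}) fixes $G_1$ pointwise and conjugates $G_2$ by the fixed edge generator $\ell$, so $T_\alpha^k(c)$ has a syllable length that grows linearly in $k$: each $G_2$-syllable $h_i$ is replaced by $\ell^k h_i \ell^{-k}$, and after reduction the resulting element has translation length $\ell_T\big(T_\alpha^k(c)\big) = \ell_T(c) + 2k\,\ell_T(\ell)$ on $T$ (the standard Dehn-twist translation-length formula, compatible with the surface picture). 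The key consequence is that $c$ and $T_\alpha^k(c)$ have \emph{different} translation lengths on $T$ for $k \neq 0$, hence are non-conjugate in $G$; and moreover this is detected by the combinatorics of $T$, which is an inverse limit of finite quotient graphs.

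The main step is then the separation argument. I would choose a finite quotient $T \twoheadrightarrow T_0$ of the pro-$p$ tree, with corresponding $p$-group quotient $\varphi\colon G \twoheadrightarrow P$ acting on the finite graph $T_0$, deep enough that: (i) the images of $c$ and of $T_\alpha^k(c)$ still act hyperbolically on $T_0$ with translation lengths matching their values on $T$ (possible since translation length is determined by finitely much data once one goes deep enough in the inverse limit, and since $p$-efficiency guarantees the quotient maps $T \to T_0$ respect the tree structure); and (ii) any two elements of $P$ that are conjugate in $P$ act with equal translation length on $T_0$. Since $\ell_{T_0}(\varphi(c)) = \ell_T(c) \neq \ell_T(T_\alpha^k(c)) = \ell_{T_0}(\varphi(T_\alpha^k(c)))$ for $k\neq 0$, the two images cannot be conjugate in $P$, which is exactly the claim. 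For $k=0$ there is nothing to prove. I expect the principal obstacle to be point (i): controlling translation lengths under passage to a finite quotient of the pro-$p$ tree, i.e.\ ensuring that no ``unexpected collapse'' in $T_0$ shortens the axis of $\varphi(c)$ or $\varphi(T_\alpha^k(c))$. This is where $p$-efficiency is essential — it lets one invoke the structure theory of \cite{ribes2017profinite} (Section 6) to see that the finite quotient trees $T_0$ are themselves standard graphs of finite $p$-groups with cyclic edge groups, so hyperbolic elements stay hyperbolic and their translation lengths are read off from reduced words, exactly as in the discrete case; alternatively one can bypass trees entirely and argue with reduced words in the finite amalgam $P_1 *_{\mathbb{Z}/p^s} P_2$ produced by Proposition \ref{prop:p torsion } and Theorem \ref{thm: Higman}, tracking syllable length directly as in the proof of Theorem \ref{thm:amalgam discrete demuskin}.
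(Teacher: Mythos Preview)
Your argument contains a genuine error at the translation-length step. The edge generator $\ell$ of $\alpha$ lies in the amalgamated subgroup $L$, so it \emph{fixes} an edge of the Bass--Serre tree $T$ of $\alpha$; in particular $\ell_T(\ell)=0$, and your own formula already gives $\ell_T\big(T_\alpha^k(c)\big)=\ell_T(c)$. More conceptually, the Dehn twist $T_\alpha$ preserves the splitting $\alpha$ (it fixes $G_1$ and conjugates $G_2$ by an element of $L$), hence preserves the length function $\ell_T$; so $c$ and $T_\alpha^k(c)$ always have the \emph{same} translation length on $T$. Your reduction of the normal form is also off: replacing each $h_i$ by $\ell^k h_i \ell^{-k}$ and absorbing the $\ell^{\pm k}$ into the adjacent $G_1$-syllables leaves the syllable length unchanged. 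Translation length on $T$ therefore cannot separate $c$ from $T_\alpha^k(c)$, and the whole passage to the finite quotient tree $T_0$ collapses.

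The paper's proof proceeds quite differently: it simply observes that the statement is (case~3 of) Proposition~4.5 in Paris \cite{paris2009residual}, originally proved for surface groups, and that the argument there goes through verbatim for $\mathcal{G}_{r'}$. Paris's method does pass to a finite amalgam $P_1*_{\mathbb{Z}/p^s}P_2$ as in your closing remark, but the invariant used is not translation length; it is a careful comparison of cyclically reduced normal forms together with the conjugacy criterion in amalgamated products, after which residual-$p$ of the finite amalgam (Higman's theorem, as in Theorem~\ref{thm: Higman}) yields the required $p$-group quotient. If you want to salvage your approach, you would need an invariant of conjugacy classes that $T_\alpha$ genuinely changes---for instance, the precise sequence of vertex-group cosets along the axis of $c$ in $T$, not just its length---and then show that this finer data survives in a suitable $p$-group quotient; that is essentially what Paris does.
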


\begin{proof}
The lemma is the same as proposition 4.5 in \cite{paris2009residual}
(there it was proven for surfaces), where $g=c$, $h=\varphi(T_{\alpha}^{k}(c))$
and the splitting is $\alpha$. 

Since we are looking to separate an element from its images under
the Dehn twist of $\alpha$, we see that we are exactly in case 3
of the proof of proposition 4.5 in \cite{paris2009residual}, and
for such a case, the exact same proof follows for our groups $\mathcal{G}_{r'}$.
\end{proof}
\begin{lem}
\label{lem:p-cong sep HNN}Let $\alpha\in Split_{D,HNN}(\mathcal{G},\mathbb{Z})$,
then there is a splitting $\beta$ having non trivial intersection
with $\alpha$, such that there is a homomorphism $\varphi:G\rightarrow P$
such that $P$ is a finite p-group, and $\varphi(c)$ is not conjugate
to $\varphi(T_{\alpha}^{k}(c))$, where $c$ is the generator of the
edge of $\beta$.
\end{lem}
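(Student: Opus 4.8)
The plan is to mimic the strategy of Lemma \ref{lem:p-cong sep}, but first I must supply the intersecting splitting $\beta$ by hand, since in the HNN case (unlike the amalgamated case) it is not handed to us. I would take $\beta$ to be exactly the splitting constructed in the proof of Theorem \ref{thm:HNN discrete Demuskin}: writing $\alpha$ as $A*_\theta$ with $A$ free on $x_1,y_1,\dots,x_{d-1},y_{d-1},y_d$ (or the analogous generators), set $\beta$ to be the HNN splitting with vertex group $B=\langle x_1,y_1,\dots,x_{d-1},y_{d-1},y_d\rangle$ and the appropriate stable letter, so that the role of "$x_d$" in $\mathcal{G}$ is the stable letter of $\beta$. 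The point established there is that $x_d$ acts hyperbolically (positive translation length) on the Bass--Serre tree of $\beta$ because it is not conjugate into $B$, which gives $i(\alpha,\beta)\neq 0$; and likewise the edge generator $c$ of $\beta$ is designed so that after applying $T_\alpha^k$ the translation length on the relevant tree stays positive, i.e.\ $T_\alpha^k(c)$ is not conjugate to $c$ in $\mathcal{G}$. (This last assertion is really the content of the "splittings intersect" computation in Theorem \ref{thm:HNN discrete Demuskin}, combined with Lemma \ref{lem:Conj intersection} and the fact that no splitting intersects itself.)

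Next I would pass to the pro-$p$ setting. By Corollary \ref{cor:p-eff splits} both $\alpha$ and $\beta$ are $p$-efficient splittings of $\mathcal{G}$, hence by \cite{ribes2017profinite} the discrete group $\mathcal{G}$ embeds in $G$ and the splittings extend to injective splittings of $G$ over procyclic edge groups; in particular the edge generator $c$ of $\beta$ maps to a well-defined element of $G$, and $T_\alpha^k(c)$ (computed in $G$ via the pro-$p$ Dehn twist, which restricts to the discrete one) maps correspondingly. So it suffices to separate $c$ from the conjugacy class of $T_\alpha^k(c)$ in a finite $p$-quotient of $\mathcal{G}$, and then — using $p$-efficiency to ensure the relevant subgroups carry the full pro-$p$ topology — promote this to the required homomorphism $\varphi\colon G\to P$. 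At this point I would invoke the argument of Proposition 4.5 in \cite{paris2009residual} verbatim, exactly as is done in the proof of Lemma \ref{lem:p-cong sep}: we are again in "case 3" of that proof (separating an element from its Dehn-twist images across the splitting $\alpha$), and since $\alpha\in Split_{D,HNN}(\mathcal{G},\mathbb{Z})$ is $p$-efficient with cyclic (hence procyclic) edge group, all the hypotheses that proof uses hold for $\mathcal{G}$ in place of a surface group.

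The main obstacle I anticipate is verifying that the Paris--type separability argument, which in \cite{paris2009residual} is carried out for an HNN splitting of a \emph{surface} group, really does go through for $\mathcal{G}$ (and for the specific $\alpha$ at hand) using only the properties we have available — namely $p$-efficiency of $\alpha$, the structure of the vertex group as a free group, and the explicit form of $\theta$. The geometric input Paris uses (normal forms in HNN extensions, acylindricity-type control on how the edge group sits) should all be replaceable by the Bass--Serre / pro-$p$ Bass--Serre bookkeeping already set up in Section \ref{sec:Bass-serre} and by Corollary \ref{cor:p-eff splits}, but care is needed because $\mathcal{G}$ has a relator and so normal forms are only "honest" inside the free vertex group, not in $\mathcal{G}$ itself. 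The secondary point to be careful about is the passage from a finite $p$-quotient of $\mathcal{G}$ to one of $G$: one must check that the conjugacy-separation survives, which follows because the discrete group is dense in its pro-$p$ completion and conjugacy classes in a finite quotient are detected already at finite level, so no information is lost.
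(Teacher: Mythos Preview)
Your proposal has the right overall shape---produce an intersecting $\beta$, then invoke Paris's Proposition~4.5 to separate $c$ from $T_\alpha^k(c)$ in a finite $p$-quotient---but there is a genuine gap in the reduction to Paris.

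You propose to run Paris's argument \emph{with respect to the HNN splitting $\alpha$ itself}, asserting that ``we are again in case~3.'' The paper does not do this, and for good reason: Paris's Proposition~4.5 (and the case analysis you are appealing to) is set up for an amalgamated product decomposition. What the paper does instead is introduce a \emph{third} splitting, the amalgamated product
\[
\gamma \;=\; F_{2d-2}\;*_{\,x_1^{q}[x_1,y_1]\cdots[x_{d-1},y_{d-1}]\,=\,[y_d,x_d]\,}\;F_2,
\]
and then writes both $c$ and $T_\alpha^k(c)$ in normal form with respect to $\gamma$ before applying Paris. Crucially, the $\beta$ that the paper chooses is \emph{not} the one from Theorem~\ref{thm:HNN discrete Demuskin}: it is a different HNN splitting, with vertex group $\langle x_1,y_1,\dots,x_{d-1},y_d,b\rangle$ for $b=y_{d-1}^{-1}x_d$, designed precisely so that its edge generator $c=x_d^{-1}y_{d-1}x_{d-1}^{-1}y_{d-1}^{-1}x_d y_d$ has a clean alternating form $(\,\text{in }F_2\,)(\,\text{in }F_{2d-2}\,)(\,\text{in }F_2\,)$ with respect to $\gamma$. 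With your choice of $\beta$ the edge generator does not visibly admit such a form, and so the Paris machinery does not apply directly.

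A second, smaller point: even after passing to $\gamma$, the paper lands in either case~2 or case~3 of Paris, not only case~3; the case~2 branch requires a further check (Lemma~4.8 of \cite{paris2009residual}, which goes through because on the $F_2$ side the edge element is the commutator $[y_d,x_d]$). Your proposal does not account for this. In short, the missing idea is the auxiliary amalgamated splitting $\gamma$ and a choice of $\beta$ compatible with it; without these, the invocation of Paris is not justified.
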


\begin{proof}
After change of coordinates, we can assume $\alpha$ is the splitting
of $\mathcal{G}$ into $A*_{\theta}$ where $A$ be the free group
on $d-1$ generators $x_{1},y_{1}...,x_{d}$, and let $\theta$ be
the map sending $x_{d}$ to $x_{1}^{p}[x_{1},y_{1}]...[x_{d-1},y_{d-1}]x_{d}$.

Let $\beta$ be the splitting $B=<x_{1},y_{1},..,x_{d-1},y_{d},b>$
where $b=y_{d-1}^{-1}x_{d}$ and $\theta'(b^{-1}x_{d-1}^{-1}by_{d})=x_{d-1}^{-1}[y_{d-2},x_{n-2}]...[y_{1},x_{1}]x_{1}^{q}y_{d}$
is the gluing map we have: 

\[
B*_{\theta'}=\{B,t|\begin{array}{c}
tb^{-1}x_{d-1}^{-1}by_{d}t^{-1}=\\
x_{d-1}^{-1}[y_{d-2},x_{n-2}]...[y_{1},x_{1}]x_{1}^{q}y_{d}
\end{array}\}
\]
 where $t$ would be $x_{d}$ in $\mathcal{G}$.

Note that for the same reason as in theorem \ref{thm:HNN discrete Demuskin}
we would get $T_{\alpha}^{k}(\beta)\neq\beta$.

Let $c=x_{d}^{-1}y_{d-1}x_{d-1}^{-1}y_{d-1}^{-1}x_{d}y_{d}$.

Then the statement of our lemma is the same as proposition 4.5 in
\cite{paris2009residual} (there it was proven for surfaces), for
$g=c$, $h=\varphi(T_{\alpha}^{k}(c))$ and the splitting is $F_{2d-2}*_{x_{1}^{q}[x_{1},y_{1}]...[x_{d-1},y_{d-1}]=[y_{d},x_{d}]}F_{2}=\gamma$. 

Writing $c$ under the splitting $F_{2d-2}*_{x_{1}^{q}[x_{1},y_{1}]...[x_{d-1},y_{d-1}]=[y_{d},x_{d}]}F_{2}$,
we get $c=(x_{d}^{-1})(y_{d-1}x_{d-1}^{-1}y_{d-1}^{-1})(x_{d}y_{d})$
and $\varphi(T_{\alpha}^{k}(c))=(x_{d}^{-1})(y_{d-1}x_{d-1}^{-1}y_{d-1}^{-1})(x_{d}x_{d}^{k}y_{d})$.

We see that we are either in case 2 of proposition 4.5 in \cite{paris2009residual},
and since on $F_{2}$ the edge is $\gamma=[y_{d},x_{d}]$ we have
lemma 4.8, and so the same rest of the proof works, or in case 3 of
the proof of proposition 4.5 in \cite{paris2009residual}, and again
for such a case, the exact same proof follows for our group $\mathcal{G}$.

Note that the above almost corresponds to the following picture (again
only almost since $\beta$ got put in a presentation which is easier
to work with):

\begin{center} 
\begin{tikzpicture}
\filldraw[fill=gray] (0,1) to[out=30,in=150] (2,1) to[out=-30,in=210] (3,1) to[out=30,in=150] (5,1) to[out=-30,in=30] (5,-1) to[out=210,in=-30] (3,-1) to[out=150,in=30] (2,-1) to[out=210,in=-30] (0,-1) to[out=150,in=-150] (0,1);
\draw[smooth] (0.4,0.1) .. controls (0.8,-0.25) and (1.2,-0.25) .. (1.6,0.1);
\filldraw[fill=white][smooth] (0.5,0.02) .. controls (0.8,-0.25) and (1.2,-0.25) .. (1.5,0.02);
\filldraw[fill=white][smooth] (0.5,0.015) .. controls (0.8,0.2) and (1.2,0.2) .. (1.5,0.015);
\draw[smooth] (3.4,0.1) .. controls (3.8,-0.25) and (4.2,-0.25) .. (4.6,0.1);
\filldraw[fill=white][smooth] (3.5,0.02) .. controls (3.8,-0.25) and (4.2,-0.25) .. (4.5,0.02); 
\filldraw[fill=white][smooth] (3.5,0.015) .. controls (3.8,0.2) and (4.2,0.2) .. (4.5,0.015); 
\draw [color=blue](1,0) circle (1 and 0.8);
\draw [color=orange](2.5,0.85) arc(270:90:0.3 and -0.85);
\draw[color=orange][dashed] (2.5,0.85) arc(270:450:0.3 and -0.85);
\draw [color=red](1.5,0.01) arc(0:180: -1 and 0.17);
\draw [color=red][dashed](1.5,-0.01) arc(0:-180: -1 and 0.17);
\node at (1,1) {$ \color{blue} \alpha $}; 
\node at (3,0.5) {$ \color{red} \beta $}; 
\node at (2.5,1.2) {$ \color{orange} \gamma $}; 
\end{tikzpicture} 
\end{center}
\end{proof}
\begin{lem}
\label{lem:faithful action on pro-p}The action of $Out_{D}(\mathcal{G}_{r'})$
on $Split(G,\mathbb{Z}_{p})$ is faithful.
\end{lem}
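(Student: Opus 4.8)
The plan is to show that any nontrivial element of $Out_D(\mathcal{G}_{r'})$ acts nontrivially on $Split(G,\mathbb{Z}_p)$. An element of $Aut_D(\mathcal{G}_{r'})$ is, by Definition \ref{def:D split and aut}, a product $\Phi = T_{\alpha_1}^{k_1}\cdots T_{\alpha_m}^{k_m}$ of powers of Dehn twists along splittings $\alpha_i \in Split_D(\mathcal{G}_{r'},\mathbb{Z})$. By Corollary \ref{cor:p-eff splits} each such splitting is $p$-efficient, so by the discussion following the definition of $p$-efficiency it induces a splitting $j(\alpha_i) \in Split(G,\mathbb{Z}_p)$ via pro-$p$ completion; moreover the Dehn twist $T_{\alpha_i}$ of $\mathcal{G}_{r'}$ extends (by the universal properties of amalgamated products and HNN extensions, as in the pro-$p$ Dehn twist definitions) to the Dehn twist $T_{j(\alpha_i)}$ of $G$. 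Thus $\Phi$ induces a well-defined automorphism $\hat\Phi = T_{j(\alpha_1)}^{k_1}\cdots T_{j(\alpha_m)}^{k_m}$ of $G$, and this is precisely the action of $\Phi$ on $Split(G,\mathbb{Z}_p)$ via $\beta \mapsto \hat\Phi(\beta)$. So I must show: if $\hat\Phi$ acts trivially on $Split(G,\mathbb{Z}_p)$ — i.e. fixes every $\mathbb{Z}_p$-splitting up to the equivalence defining $Split$ — then $\Phi$ is inner in $\mathcal{G}_{r'}$, hence trivial in $Out_D(\mathcal{G}_{r'})$.

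First I would reduce to the case $m=1$ is not available, so instead I argue directly with $\hat\Phi$. The key point is that $\hat\Phi$ acting trivially on $Split(G,\mathbb{Z}_p)$ forces $\hat\Phi$ to be inner in $G$: indeed, pick any $\alpha \in Split_D(\mathcal{G}_{r'},\mathbb{Z})$ intersecting some auxiliary splitting $\beta$ (such $\beta$ exists by the arguments in Theorems \ref{thm:HNN discrete Demuskin} and \ref{thm:amalgam discrete demuskin}, and their pro-$p$ analogues via Lemmas \ref{lem:p-cong sep} and \ref{lem:p-cong sep HNN}); if $\hat\Phi$ fixes $j(\beta)$ up to equivalence but $\hat\Phi$ itself were a nontrivial product of twists along splittings intersecting $j(\beta)$, one derives a contradiction exactly as in the proof of Theorem \ref{thm:amalgam discrete demuskin}, using Lemma \ref{lem:cong formula} (the conjugation formula $T_{\hat\Phi(\beta)}^k = \hat\Phi T_\beta^k \hat\Phi^{-1}$) together with the fact that a splitting never intersects itself (Lemma \ref{lem:Conj intersection}). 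So $\hat\Phi$ must be inner in $G$, say conjugation by $g \in G$.

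Next I would descend this from $G$ back to $\mathcal{G}_{r'}$. Since $\mathcal{G}_{r'}$ is residually $p$ (Proposition stating $\mathcal{G}_{r'}$ is residually p, via \cite{gildenhuys1975one}), it embeds in its pro-$p$ completion $G$. The automorphism $\Phi$ of $\mathcal{G}_{r'}$, viewed inside $G$, equals conjugation by $g$. I claim $g$ must normalize $\mathcal{G}_{r'}$ inside $G$ and in fact lie in $\mathcal{G}_{r'}$: here I would use that $\Phi$ preserves $\mathcal{G}_{r'}^{ab}$ and its torsion, and more concretely that $\Phi$ fixes enough explicit generators — each $T_{\alpha_i}$ fixes a free factor of corank one — so that $g$ centralizes a large subgroup; combined with the structure of Demushkin groups (centralizers of non-procyclic subgroups are trivial, since such groups are PD$^2$ of dimension $2$ with the relevant cohomological vanishing) this pins $g$ down to an element of $\mathcal{G}_{r'}$, possibly after adjusting by the center (which is trivial for $p \neq 2$). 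Hence $\Phi$ is inner in $\mathcal{G}_{r'}$, so trivial in $Out_D(\mathcal{G}_{r'})$, proving faithfulness.

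The main obstacle I anticipate is the descent step: showing that an element of $G = \widehat{\mathcal{G}_{r'}}^{(p)}$ conjugating $\mathcal{G}_{r'}$ to itself (as the automorphism $\Phi$) actually lies in $\mathcal{G}_{r'}$ rather than merely in its normalizer inside $G$. This is the pro-$p$ analogue of "$\mathcal{G}_{r'}$ is conjugacy-separable / self-normalizing in its completion," and it is exactly where the structural results on Demushkin groups and the $p$-efficiency of the relevant splittings do the work. An alternative route that sidesteps part of this: argue that $\Phi$ nontrivial in $Out_D(\mathcal{G}_{r'})$ means there is a splitting $\gamma$ of $\mathcal{G}_{r'}$ with $\Phi(\gamma) \neq \gamma$ in $Split(\mathcal{G}_{r'},\mathbb{Z})$ (this is essentially the content of Theorems \ref{thm:HNN discrete Demuskin} and \ref{thm:amalgam discrete demuskin}), and then show $j(\Phi(\gamma)) \neq j(\gamma)$ in $Split(G,\mathbb{Z}_p)$ using that $j$ is injective on $Aut$-orbits and, within a fixed orbit, detects the twisting — the separation statements of Lemmas \ref{lem:p-cong sep} and \ref{lem:p-cong sep HNN} provide the finite $p$-group quotient distinguishing the edge generators, which is precisely what certifies $j(\Phi(\gamma)) \neq j(\gamma)$. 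I would likely present this second route as the main argument, as it keeps everything on the level of splittings and finite quotients where the earlier lemmas apply directly.
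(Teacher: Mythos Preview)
Your second route is exactly the paper's argument, and the paper's proof in fact only treats the case $\Phi = T_\alpha^k$: pick $\beta$ with $T_\alpha^k(\beta) \neq \beta$ (supplied by the proofs of Theorems \ref{thm:HNN discrete Demuskin} and \ref{thm:amalgam discrete demuskin}), use the $Aut(\mathcal{G}_{r'})$-equivariance of $j$ (Theorem \ref{thm: equivariant CC map}) to reduce $j(T_\alpha^k(\beta)) \neq j(\beta)$ to non-conjugacy of the edge generator $c$ and $T_\alpha^k(c)$ in $G$, and conclude via Lemmas \ref{lem:p-cong sep} and \ref{lem:p-cong sep HNN}.

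The gap in your write-up is the extension to arbitrary products $\Phi = T_{\alpha_1}^{k_1}\cdots T_{\alpha_m}^{k_m}$. Theorems \ref{thm:HNN discrete Demuskin} and \ref{thm:amalgam discrete demuskin} only manufacture a splitting moved by a \emph{single} power $T_\alpha^k$, not by a general word in Dehn twists; and Lemmas \ref{lem:p-cong sep}, \ref{lem:p-cong sep HNN} only separate $c$ from $T_\alpha^k(c)$ in a finite $p$-quotient, not $c$ from $\Phi(c)$ for general $\Phi$. The paper does not address the general case either---its proof is written entirely for $T_\alpha^k$, which is all that is needed for Corollaries \ref{cor:dis to pro-p dehn} and \ref{cor:main dehn outer}. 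Your first route should be dropped: the descent step (showing the conjugating element $g \in G$ actually lies in $\mathcal{G}_{r'}$ via centraliser arguments) is not in the paper, is not supported by any result established there, and the heuristic that ``each $T_{\alpha_i}$ fixes a free factor of corank one'' does not survive composition, so nothing forces $g$ to centralise a large subgroup of $G$.
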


\begin{proof}
First, by the proof of theorem \ref{thm:amalgam discrete demuskin}
and lemma \ref{lem:p-cong sep HNN}, we get that $T_{\alpha}^{k}$
has a splitting $\beta\in Split(\mathcal{G},\mathbb{Z})$ such that
$T_{\alpha}^{k}(\beta)\neq\beta$. We would like to show that for
that $\beta$ we also have $T_{\alpha}^{k}(j(\beta))\neq j(\beta)$. 

Due to $j$ being $Aut(\mathcal{G})$ equivariant, we get that we
want to show $j(T_{\alpha}^{k}(\beta))\neq j(\beta)$. 

Let $c$ be the generator of the edge of $\beta$, we would like to
show that $c$ and $T_{\alpha}^{k}(c)$ are not conjugate in $G$.

We already know from $T_{\alpha}^{k}(\beta)\neq\beta$ that they are
not conjugate in $\mathcal{G}$. 

By lemma \ref{lem:p-cong sep} or \ref{lem:p-cong sep HNN} we get
that $T_{\alpha}^{k}(c)$ and $c$ are not conjugate in $G$ and we
are done.
\end{proof}
Theorem \ref{thm: equivariant CC map} and lemma \ref{lem:faithful action on pro-p}
gives us:
\begin{cor}
\label{cor:injection of Out}The natural map $\phi:Out_{Dehn}(\mathcal{G})\rightarrow Out(G)$
is an injection.
\end{cor}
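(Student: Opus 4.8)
The plan is to exhibit $\phi$ as the map on outer automorphism groups induced by the pro-$p$ completion functor, and then to read off injectivity formally from the faithfulness statement of Lemma \ref{lem:faithful action on pro-p}. First I would recall that, since each $\mathcal{G}_{r'}$ is residually $p$, the canonical map $\iota\colon\mathcal{G}\to G=\widehat{\mathcal{G}}$ into the pro-$p$ completion is injective, and every $\psi\in Aut(\mathcal{G})$ extends uniquely to $\widehat{\psi}\in Aut(G)$ by the universal property of the pro-$p$ completion. Since conjugation by $g\in\mathcal{G}$ extends to conjugation by $\iota(g)$, inner automorphisms go to inner automorphisms, so this descends to a homomorphism $Out(\mathcal{G})\to Out(G)$ whose restriction to the subgroup generated by the Dehn twists of the splittings in $Split_D(\mathcal{G},\mathbb{Z})$ (Definition \ref{def:D split and aut}) is exactly $\phi\colon Out_{Dehn}(\mathcal{G})\to Out(G)$.

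Next I would make precise the action referred to in Lemma \ref{lem:faithful action on pro-p}. The group $Out(G)$ acts on $Split(G,\mathbb{Z}_{p})$ by transport of structure: an automorphism of $G$ carries a one-edge decomposition of $G$ as a graph of pro-$p$ groups to another, and inner automorphisms fix every isomorphism class of splitting, so the action descends to $Out(G)$. Composing with $\phi$ gives the action of $Out_{D}(\mathcal{G})$ on $Split(G,\mathbb{Z}_{p})$; the point of Theorem \ref{thm: equivariant CC map} is precisely that the map $j$ intertwines the tautological action of $Aut(\mathcal{G})$ on $Split_{D}(\mathcal{G},\mathbb{Z})$ with this action on $Split(G,\mathbb{Z}_{p})$ (and, incidentally, that $j$ is injective, since $j(x)\le j(y)$ and $j(y)\le j(x)$ force $x=y$), so nothing is lost in moving between the discrete and the pro-$p$ pictures.

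The conclusion is then immediate. Suppose $[T]\in Out_{Dehn}(\mathcal{G})$ lies in $\ker\phi$, i.e.\ $\phi([T])$ is trivial in $Out(G)$. Then $\phi([T])$ acts as the identity on $Split(G,\mathbb{Z}_{p})$, hence so does $[T]$ under the induced action, which factors through $\phi$. But Lemma \ref{lem:faithful action on pro-p} states that this induced action of $Out_{D}(\mathcal{G})$ on $Split(G,\mathbb{Z}_{p})$ is faithful, so $[T]$ is trivial; therefore $\phi$ is injective.

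I do not expect a genuine obstacle here once Theorem \ref{thm: equivariant CC map} and Lemma \ref{lem:faithful action on pro-p} are available; the two points that need care — both routine — are that $\phi$ is well defined (inner automorphisms of $\mathcal{G}$ extend to inner automorphisms of $G$, which uses residual $p$-ness so that $\iota$ is injective) and that the action of $Out_{D}(\mathcal{G})$ on $Split(G,\mathbb{Z}_{p})$ really factors through $\phi$, so that ``faithful'' literally means ``$\phi$ has trivial kernel.'' If one preferred not to cite Lemma \ref{lem:faithful action on pro-p} as a black box, the alternative is to unwind its proof: given a nontrivial $[T]$, combine the arguments of Theorems \ref{thm:HNN discrete Demuskin} and \ref{thm:amalgam discrete demuskin} with Lemmas \ref{lem:p-cong sep} and \ref{lem:p-cong sep HNN} to produce a splitting $\beta$ of $\mathcal{G}$ whose edge generator $c$ is not conjugate to its image under $T$ even after passing to $G$, so that $\phi([T])$ moves $j(\beta)$ and hence is nontrivial in $Out(G)$.
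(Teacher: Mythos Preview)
Your proposal is correct and follows essentially the same approach as the paper, which simply records that the corollary is an immediate consequence of Theorem \ref{thm: equivariant CC map} and Lemma \ref{lem:faithful action on pro-p}. Your write-up merely spells out the routine details of how these combine (well-definedness of $\phi$, that the $Out_D(\mathcal{G})$-action on $Split(G,\mathbb{Z}_p)$ factors through $\phi$, and then faithfulness forces trivial kernel), which the paper leaves implicit.
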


Since all splittings are $p$-efficient, we get: 
\begin{cor}
\label{cor:dis to pro-p dehn}We have that $\phi(T_{x}^{k})=T_{j(x)}^{k}$,
and so if $T_{x}^{k}$ is outer, then so is $T_{j(x)}^{k}$.
\end{cor}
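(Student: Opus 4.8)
The plan is to unwind the definition of $\phi$, identify the pro-$p$ completion of a discrete Dehn twist with the associated pro-$p$ Dehn twist, and then invoke the injectivity of $\phi$. Recall that $\phi$ is the natural map induced by pro-$p$ completion: since $\mathcal{G}$ is residually $p$ it embeds in $G=\hat{\mathcal{G}}$, and functoriality of the pro-$p$ completion sends each $\theta\in\mathrm{Aut}(\mathcal{G})$ to the unique continuous $\hat\theta\in\mathrm{Aut}(G)$ extending it; inner automorphisms go to inner ones, so this descends to $\phi\colon Out_{Dehn}(\mathcal{G})\to Out(G)$. It therefore suffices to prove $\widehat{T_x}=T_{j(x)}$ in $\mathrm{Aut}(G)$ for every $x\in Split_D(\mathcal{G},\mathbb{Z})$, since then $\phi(T_x^k)=(\widehat{T_x})^k=T_{j(x)}^k$ in $Out(G)$.

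For that identity I would use that $x$ is $p$-efficient (Corollary \ref{cor:p-eff splits}): by the results of \cite{ribes2017profinite} invoked to construct $j$ in Theorem \ref{thm: equivariant CC map}, the pro-$p$ completion of the one-edge graph-of-groups decomposition of $\mathcal{G}$ given by $x$ is exactly the injective graph of pro-$p$ groups underlying $j(x)$, the closure in $G$ of each discrete vertex or edge group is its own pro-$p$ completion, and these closures topologically generate $G$. Now compare the two definitions of Dehn twist. If $x$ is an HNN splitting $\mathcal{G}=A\ast_\theta$ with stable letter $t$ and edge generator $c$, then $T_x$ fixes $A$ pointwise and $T_x(t)=ct$, so its continuous extension $\widehat{T_x}$ fixes the closure $\hat A$ pointwise and sends $t\mapsto ct$ --- precisely the defining action of $T_{j(x)}$ on $G=\hat A\ast_{\hat\theta}$. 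If $x$ is an amalgam $\mathcal{G}=A\ast_{\langle c\rangle}B$ with base vertex the one carrying $A$, then $\widehat{T_x}$ fixes $\hat A$ pointwise and acts on $\hat B$ as the continuous extension of $b\mapsto\varphi(c)b\varphi(c)^{-1}$, i.e.\ by conjugation by $\varphi(c)$, matching $T_{j(x)}$; since $\hat A$ and $\hat B$ topologically generate $G$, the two automorphisms coincide. Passing to $k$-th powers gives $(\widehat{T_x})^k=T_{j(x)}^k$, hence $\phi(T_x^k)=T_{j(x)}^k$.

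The final implication is then immediate: if $T_x^k$ is outer, its class in $Out(\mathcal{G})$ --- equivalently in the subgroup $Out_{Dehn}(\mathcal{G})$ --- is nontrivial, so by the injectivity of $\phi$ (Corollary \ref{cor:injection of Out}) the class $\phi(T_x^k)=T_{j(x)}^k$ is nontrivial in $Out(G)$, i.e.\ $T_{j(x)}^k$ is outer. The only step requiring real care is the comparison of the discrete and pro-$p$ Dehn twists through completion, and this is exactly where $p$-efficiency is essential: as the remark preceding Section \ref{sec:Main results} warns, a pro-$p$ Dehn twist is in general \emph{not} the completion of a discrete one, so it is precisely the $p$-efficiency of the splittings in $Split_D$ that makes this identification legitimate.
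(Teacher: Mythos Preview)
Your argument is correct and follows the same line as the paper: the paper's entire justification is the phrase ``Since all splittings are $p$-efficient, we get,'' and you have simply unpacked what that means --- namely, that $p$-efficiency makes the pro-$p$ completion of the discrete graph-of-groups splitting $x$ equal to the graph of pro-$p$ groups underlying $j(x)$, so the continuous extension of the discrete Dehn twist coincides on (topological) generators with the pro-$p$ Dehn twist, giving $\widehat{T_x}=T_{j(x)}$; the second clause then follows from Corollary~\ref{cor:injection of Out}, which in the paper immediately precedes the statement.
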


\begin{cor}
\label{cor:main dehn outer}Given $\alpha\in Split(G,\mathbb{Z})$,
one has that $T_{\alpha}^{k}\neq0$ in $Out(G),$ for any $k\neq0$.
\end{cor}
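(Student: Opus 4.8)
The plan is to reduce an arbitrary splitting of $G$ to the two discrete cases already settled in Theorems~\ref{thm:HNN discrete Demuskin} and~\ref{thm:amalgam discrete demuskin}, and then transport the conclusion along the $Aut(G)$-orbit classification of Theorem~\ref{thm:curve complex bijection}. The point is that everything substantive has been proved already; the corollary is just the bookkeeping that glues the discrete infinite-order statements to the pro-$p$ group via the curve-complex bijection.

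Concretely, I would argue as follows. Let $\alpha\in Split(G,\mathbb{Z}_{p})$. By Theorem~\ref{thm:curve complex bijection} there is an $r'\geq r$ and a discrete splitting $\beta\in Split_{D}(\mathcal{G}_{r'},\mathbb{Z})$ --- either the HNN extension of Theorem~\ref{thm:HNN discrete Demuskin} or, for a suitable $1\leq n\leq d$, the amalgamated product of Theorem~\ref{thm:amalgam discrete demuskin} --- such that $j(\beta)$ lies in the same $Aut(G)$-orbit as $\alpha$, say $\psi(\alpha)=j(\beta)$ for some $\psi\in Aut(G)$. By Theorems~\ref{thm:HNN discrete Demuskin} and~\ref{thm:amalgam discrete demuskin}, $T_{\beta}^{k}$ is non-trivial in $Out(\mathcal{G}_{r'})$ for every $k\neq 0$; since $\beta$ is $p$-efficient by Corollary~\ref{cor:p-eff splits}, Corollary~\ref{cor:dis to pro-p dehn} then gives that $T_{j(\beta)}^{k}$ is outer in $Out(G)$. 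Finally, because $\alpha$ and $j(\beta)$ are in the same $Aut(G)$-orbit, Corollary~\ref{cor:orbits dehn} (equivalently, Lemma~\ref{lem:cong formula} together with the normality of inner automorphisms in $Aut(G)$) shows that $T_{\alpha}^{k}$ is inner if and only if $T_{j(\beta)}^{k}$ is inner; as the latter is outer, so is the former. Hence $T_{\alpha}^{k}\neq 0$ in $Out(G)$ for all $k\neq 0$.

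The substantive work lies entirely in the cited results: the infinite-order statements of Theorems~\ref{thm:HNN discrete Demuskin}/\ref{thm:amalgam discrete demuskin} rest on the intersecting-splittings arguments (a Dehn twist cannot be inner because it moves a transverse splitting, and conjugation preserves non-intersection by Lemma~\ref{lem:Conj intersection}), while Theorem~\ref{thm:curve complex bijection} is the change-of-coordinates classification. The only point demanding care in assembling the corollary is to be sure the orbit classification really hits \emph{every} $\mathbb{Z}_{p}$-splitting --- in particular every amalgamated splitting, which is where the hypothesis $d>1$ enters --- and that in each orbit the chosen representative $\beta$ is one of the discrete splittings whose Dehn twist was shown to have infinite order; both are exactly what Theorem~\ref{thm:curve complex bijection} provides, since Theorems~\ref{thm:HNN discrete Demuskin} and~\ref{thm:amalgam discrete demuskin} between them cover the HNN case and every amalgamated case $1\leq n\leq d$.
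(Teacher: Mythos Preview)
Your argument is correct and follows exactly the same route as the paper: use Theorem~\ref{thm:curve complex bijection} to move $\alpha$ into the $Aut(G)$-orbit of some $j(x)$ with $x\in Split_{D}(\mathcal{G}_{r'},\mathbb{Z})$, invoke Theorems~\ref{thm:HNN discrete Demuskin} and~\ref{thm:amalgam discrete demuskin} to get $T_{x}^{k}$ outer, pass to $T_{j(x)}^{k}$ via Corollary~\ref{cor:dis to pro-p dehn}, and finish with Corollary~\ref{cor:orbits dehn}. The only cosmetic difference is that you spell out the $p$-efficiency step and add the closing remarks about $d>1$; the paper's proof is the same three-line chain of citations without the commentary.
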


\begin{proof}
Let $\alpha\in Split(G,\mathbb{Z}_{p})$, by theorem \ref{thm:curve complex bijection}
we have $\phi\in Aut(G)$ such that $\phi(\alpha)=j(x)$ for some
$x\in Split_{D}(\mathcal{G}_{r'},\mathbb{Z})$ for some $r'$.

By theorems \ref{thm:HNN discrete Demuskin} and \ref{thm:amalgam discrete demuskin},
$T_{x}^{k}$ is outer, and so by lemma \ref{cor:dis to pro-p dehn},
so is $T_{j(x)}^{k}$.

By corollary \ref{cor:orbits dehn} we get that $T_{\alpha}^{k}$
is outer, as required.
\end{proof}

\subsubsection{A family of non pro-$l$ rigid groups}

We finish with an application to group theory, coming from the examples
discussed above.

As we have seen, all the group $\mathcal{G}_{r'}$ have the same (non-free)
pro-p completion, namely the Demuskin group $G$.

On the other hand, for all $l\neq p$ we also have that they have
the same pro-$l$ completion, namely the free pro-l group on $2d-1$
generators.

This is due to the relation $w_{r'}:=x_{1}^{q}[x_{1},y_{1}]...[x_{d},y_{d}]y_{d}^{-p^{r'}}$
not being in $[\hat{F}_{l},\hat{F}_{l}](\hat{F}_{l})^{l}$, where
$\hat{F}_{l}$ is the free pro-$l$ group on generators $\{x_{1},y_{1},...,x_{d},y_{d}\}$,
and so $w_{r'}$ is primitive in $\hat{F}_{l}$.

We would like to show that these groups are non-isomorphic as discrete
groups, which we do in the following lemma:
\begin{lem}
\label{lem:non isomorphic}Given a group $\mathcal{G}_{t}$ for some
$t\geq r$, there are finitely many $k\geq r$ such that $\mathcal{G}_{t}\cong\mathcal{G}_{k}$.
In particular there are infinitely many isomorphism classes in the
family $\{\mathcal{G}_{r'}\}_{r'\geq r}$.
\end{lem}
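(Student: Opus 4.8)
The plan is to distinguish the groups $\mathcal{G}_{r'}$ by a discrete invariant that can be read off a one-relator presentation, and show that only finitely many values of $r'$ can give the same invariant value. The natural candidate is the torsion of the abelianization. Indeed, $\mathcal{G}_{r'}^{ab} = \mathbb{Z}^{2d-1} \oplus \mathbb{Z}/(\gcd(q,p^{r'}))\mathbb{Z}$, since abelianizing the relation $x_1^q[x_1,y_1]\cdots[x_d,y_d]y_d^{-p^{r'}}$ kills all the commutators and leaves $q x_1 - p^{r'} y_d = 0$; here $q = p^r$, so the torsion part is $\mathbb{Z}/p^{\min(r,r')}\mathbb{Z}$. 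Unfortunately this is \emph{constant} equal to $\mathbb{Z}/p^r\mathbb{Z}$ for all $r' \geq r$, so the abelianization alone does not separate them — one has to go one step further into the lower central series.

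\textbf{First} I would compute a finer nilpotent invariant, e.g. the second nilpotent quotient $\mathcal{G}_{r'}/\gamma_3(\mathcal{G}_{r'})$, or more robustly the Alexander-type module / the torsion in $H_2$ or in $\gamma_2/\gamma_3$. The point is that in the free group $F = F(x_1,y_1,\dots,x_d,y_d)$ the relator $w_{r'} = x_1^{p^r}[x_1,y_1]\cdots[x_d,y_d]y_d^{-p^{r'}}$ lies in $\gamma_2(F)$ only after we quotient by the abelianization image; more precisely, working in $F/\gamma_3(F)$, the class of $w_{r'}$ is $\sum_{i}[x_i,y_i] + \binom{p^r}{2}\,(\text{something in }\gamma_2) - \binom{p^{r'}}{2}(\dots)$ plus the abelianized part $p^r x_1 - p^{r'} y_d$. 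Killing this relator (and its conjugates, which modulo $\gamma_3$ only contributes the $F^{ab}$-translates) produces a finitely generated nilpotent group whose isomorphism type depends on $r'$ through the $p$-adic valuations appearing in these binomial coefficients and in the cross-terms. The cleanest formulation: the group $\mathcal{G}_{r'}/\gamma_3$, or its abelian subquotients, carries a $\mathbb{Z}$-module structure whose elementary divisors involve $p^{r'}$, so only finitely many $r'$ can reproduce a given finite list of elementary divisors.

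\textbf{Alternatively}, and perhaps more cleanly, one can argue via the pro-$\ell$ completions already discussed in the surrounding text together with a counting/growth argument: for $\ell \ne p$ all $\mathcal{G}_{r'}$ have the same free pro-$\ell$ completion, so that invariant is useless, but the \emph{finite} quotients of bounded order are not all the same — e.g. the number of surjections onto a fixed finite nilpotent $p$-group of class $2$ varies with $r'$ because the relator's class-$2$ part depends on $v_p$ of the exponents. Concretely one fixes a small $p$-group $P$ of exponent $p^N$ and observes that whether $w_{r'}$ dies in $P$ depends only on $\min(r', N)$ and on $r$; hence $\mathcal{G}_{r'}$ and $\mathcal{G}_t$ with $r', t > N$ look the same through $P$ but differ once $N$ is taken large enough relative to $\max(r',t)$. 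Turning this around: if $\mathcal{G}_t \cong \mathcal{G}_k$ then for every finite $p$-group $P$ they have the same number of homomorphisms to $P$, which forces $\min(t,N)=\min(k,N)$ for all $N$, hence $t=k$ unless both exceed every relevant threshold — but there is no such threshold, so in fact $t=k$; at worst this shows the fibers of $r' \mapsto \mathcal{G}_{r'}$ are finite.

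\textbf{The main obstacle} is bookkeeping the class-$2$ (and if necessary class-$3$) part of the relator correctly: one must be careful that the normal closure of $w_{r'}$, not just $\langle w_{r'}\rangle$, is what gets quotiented, so conjugates of $w_{r'}$ contribute extra elements to $\gamma_2/\gamma_3$, and these contributions are exactly the $F^{ab}$-orbit of the abelianized relator — a standard Fox-calculus / Magnus-embedding computation, but one that has to be done honestly to see the surviving $p^{r'}$-dependence. Once the dependence of some effectively computable finite invariant (say the isomorphism type of the maximal class-$2$ quotient, or equivalently a short exact sequence $0 \to \gamma_2/\gamma_3 \to \mathcal{G}_{r'}/\gamma_3 \to \mathbb{Z}^{2d-1}\oplus\mathbb{Z}/p^r \to 0$ with a cocycle depending on $r'$) is pinned down, finiteness of the isomorphism fibers is immediate because that invariant takes only finitely many values compatible with a fixed one, and hence $\{\mathcal{G}_{r'}\}_{r'\geq r}$ splits into infinitely many isomorphism classes. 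I would write the proof with the nilpotent-quotient approach as the primary argument and mention Whitehead's algorithm (as the acknowledgments hint) only as an alternative route for detecting that the one-relator presentations are genuinely inequivalent.
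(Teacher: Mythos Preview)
Your ``alternative'' argument via counting homomorphisms to finite $p$-groups is not merely incomplete --- it is wrong in a way that reveals you have overlooked the central difficulty of the lemma. The paragraph immediately preceding this lemma in the paper states that all $\mathcal{G}_{r'}$ have the \emph{same} pro-$p$ completion (the Demushkin group $G$) and the \emph{same} pro-$\ell$ completion for every $\ell\neq p$. Consequently they have identical sets of finite quotients, and in particular $|\mathrm{Hom}(\mathcal{G}_{r'},P)|$ is independent of $r'$ for every finite group $P$. The whole point of the lemma (and of Corollary~\ref{cor:non rigidity}) is that these groups are profinitely indistinguishable yet discretely distinct; an invariant that factors through finite quotients cannot possibly separate them.

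Your primary approach via nilpotent quotients runs into the same wall. All $\mathcal{G}_{r'}$ share the same pro-nilpotent completion, so any invariant visible in finite nilpotent quotients is constant. You might hope that the \emph{infinite} class-2 quotient $\mathcal{G}_{r'}/\gamma_3$ sees $r'$, but your specific claim that its elementary divisors ``involve $p^{r'}$'' is false: a direct calculation shows that $\gamma_2(\mathcal{G}_{r'})/\gamma_3(\mathcal{G}_{r'})$, as an abstract abelian group, is $\mathbb{Z}^a\oplus(\mathbb{Z}/p^r)^b$ with $a,b$ depending only on $d$ and $r$, not on $r'$ (each relation $p^r[x_1,z]-p^{r'}[y_d,z]=0$ contributes torsion $\mathbb{Z}/p^{\min(r,r')}=\mathbb{Z}/p^r$). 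Whether the full extension class distinguishes the $r'$ is genuinely delicate, and you have not attempted it.

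The paper's proof avoids all of this by working entirely inside the free group. Rosenberger's theorem reduces $\mathcal{G}_t\cong\mathcal{G}_k$ to Nielsen equivalence of the relators $w_t$ and $w_k$ in $F_{2d}$; Whitehead's algorithm then shows that each $w_{r'}$ is already of minimal length in its $\mathrm{Aut}(F_{2d})$-orbit, and since $|w_{r'}|=p^r+4d+p^{r'}$ these lengths are pairwise distinct, so the relators lie in distinct orbits. Ironically you mention Whitehead's algorithm only as a throwaway ``alternative route'' at the end --- it is in fact the actual argument, and the only one on the table that works.
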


\begin{proof}
By \cite{rosenberger1994isomorphism} theorem 3.4, we have that it
is enough to show that there are only finitely many Nielsen equivalence
classes in the family of words $w_{r'}:=x_{1}^{p^{r}}[x_{1},y_{1}]...[x_{d},y_{d}]y_{d}^{-p^{r'}}$
.

But by the whitehead algorithm \cite{whitehead1936equivalent}, we
see immediately that they are all already in minimal length, on the
other hand they have different lengths for each $r'$, and the lemma
follows.
\end{proof}
\begin{cor}
\label{cor:non rigidity}There is an infinite family of non-isomorphic
discrete groups, having the same pro-$l$ completion for all primes
$l$. 
\end{cor}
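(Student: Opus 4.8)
The plan is to assemble facts already established: the family $\{\mathcal{G}_{r'}\}_{r'\geq r}$ will be the required family, and I treat the primes $l\neq p$ and the prime $l=p$ separately. For non-isomorphism, Lemma \ref{lem:non isomorphic} says each $\mathcal{G}_{t}$ is isomorphic to only finitely many members of the family, so a greedy choice yields an infinite subfamily of pairwise non-isomorphic discrete groups, which I relabel $\{\mathcal{G}_{r'}\}$.

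For a prime $l\neq p$: pro-$l$ completion is right exact, so the pro-$l$ completion of $\mathcal{G}_{r'}$ is $\hat{F}_{l}/N_{r'}$, where $\hat{F}_{l}$ is the free pro-$l$ group on $x_{1},y_{1},\dots,x_{d},y_{d}$ and $N_{r'}$ is the closed normal subgroup generated by $w_{r'}=x_{1}^{p^{r}}[x_{1},y_{1}]\cdots[x_{d},y_{d}]y_{d}^{-p^{r'}}$. The image of $w_{r'}$ in $\hat{F}_{l}/[\hat{F}_{l},\hat{F}_{l}](\hat{F}_{l})^{l}\cong\mathbb{F}_{l}^{2d}$ is $p^{r}\bar{x}_{1}-p^{r'}\bar{y}_{d}$, and since $l\nmid p^{r'}$ its $\bar{y}_{d}$-coordinate is a unit; hence $w_{r'}$ lies outside the Frattini subgroup and is part of a basis of $\hat{F}_{l}$, so the pro-$l$ completion of $\mathcal{G}_{r'}$ is free pro-$l$ of rank $2d-1$ for every $r'$.

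For $l=p$: likewise the pro-$p$ completion of $\mathcal{G}_{r'}$ is $\hat{F}_{p}/N_{r'}$ with $\hat{F}_{p}$ free pro-$p$ of rank $2d$. Here $w_{r'}\in[\hat{F}_{p},\hat{F}_{p}](\hat{F}_{p})^{p}$, so this is a non-free one-relator pro-$p$ group on $2d$ generators; its cup-product pairing $H^{1}\times H^{1}\to H^{2}$ (coefficients $\mathbb{F}_{p}$) is computed from the quadratic part of $w_{r'}$, which is the standard symplectic form $\sum_{i}\bar{x}_{i}\cup\bar{y}_{i}$ — the factors $x_{1}^{p^{r}}$ and $y_{d}^{-p^{r'}}$ are of higher $p$-valuation and, even when an exponent equals $p$, contribute only multiples of $\bar{x}_{1}\cup\bar{x}_{1}=0$ or $\bar{y}_{d}\cup\bar{y}_{d}=0$, using $p\neq2$. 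This pairing being non-degenerate, $\hat{F}_{p}/N_{r'}$ is a Poincaré duality pro-$p$ group of dimension $2$, i.e.\ Demuskin, and its invariants are $2d$ and $p^{r}$: in $\hat{F}_{p}^{\mathrm{ab}}\cong\mathbb{Z}_{p}^{2d}$ the relator is $p^{r}\bar{x}_{1}-p^{r'}\bar{y}_{d}=p^{r}(\bar{x}_{1}-p^{r'-r}\bar{y}_{d})$ with $\bar{x}_{1}-p^{r'-r}\bar{y}_{d}$ primitive, so the torsion of the abelianization is $\mathbb{Z}/p^{r}$. By the classification of Demuskin groups with $p\neq2$ (Theorem \ref{thm:Demushkin relations }), the pro-$p$ completion of $\mathcal{G}_{r'}$ is therefore the Demuskin group $G$, for every $r'$.

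Combining these, the infinite family of pairwise non-isomorphic discrete groups $\mathcal{G}_{r'}$ has, for every prime $l$, isomorphic pro-$l$ completions — free pro-$l$ of rank $2d-1$ when $l\neq p$, and $G$ when $l=p$ — which is the claim. The one step that is not bookkeeping is the identification at $l=p$: it amounts to checking that adjoining the factor $y_{d}^{-p^{r'}}$ alters neither the generator count nor the torsion of the abelianization, and does not disturb the symplectic cup-product form, whereupon the Demuskin classification identifies the group with $G$; the hypothesis $p\neq2$ enters exactly here.
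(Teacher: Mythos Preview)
Your argument is correct and follows essentially the same route as the paper: the paper establishes the corollary by citing Lemma~\ref{lem:non isomorphic} for non-isomorphism, observing that $w_{r'}\notin[\hat{F}_{l},\hat{F}_{l}](\hat{F}_{l})^{l}$ forces the pro-$l$ completion to be free of rank $2d-1$ for $l\neq p$, and asserting (from earlier in the section) that the pro-$p$ completion is the Demushkin group $G$. Your write-up supplies more detail than the paper does---in particular the explicit verification of the Demushkin invariants via the abelianization and the cup-product form---but the underlying strategy is identical.
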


\newpage

\bibliographystyle{plain}

\end{document}